\documentclass[12pt, a4paper]{amsart}   

\usepackage[paper=a4paper, left=3cm,right=2cm,top=3cm,bottom=3cm]{geometry}  %
\usepackage[utf8]{inputenc}
 
\usepackage{setspace}
\onehalfspacing
\usepackage{epsfig}
\usepackage{bm}
\theoremstyle{plain}
\newtheorem{theo}{Theorem}
\newtheorem{lem}{Lemma}
\newtheorem{prop}[theo]{Proposition}
\newtheorem{cor}[theo]{Corollary}

\theoremstyle{definition}
\newtheorem{assumption}{Assumption}
\newtheorem{defi}{Definition}
\theoremstyle{remark}
\newtheorem{rem}{Remark}

\newcommand{\cal}{\mathcal}

\renewcommand{\H}{{\cal H}}

\newcommand{\EE}{{\cal E}}
\newcommand{\E}{{\mathbb{E}}}

\newcommand{\R}{{\mathbb{R}}}

\newcommand{\al}{\alpha}
\newcommand{\lam}{\lambda}

\newcommand{\es}{{\cal S}}

\newcommand{\h}{{\cal H}}

\newcommand{\X}{{\cal X}}

\newcommand{\prf}{\begin{proof}} 
\newcommand{\prfend}{\end{proof}} 

\newcommand{\la}{\langle}
\newcommand{\ra}{\rangle}
\newcommand{\glam}{g_{\lambda}}
\newcommand{\x}{{\bf x}}
\newcommand{\y}{{\bf y}}

\newcommand{\z}{{\bf z}}
\newcommand{\Z}{{\bf Z}}

\newcommand{\M}{{\cal M}}

\newcommand{\A}{{\cal A}}

\newcommand{\NN}{{\cal N}}

\newcommand{\eps}{\varepsilon}

\newcommand{\frho}{f_{\rho}}
\newcommand{\lamstar}{\lam_{*}}

\newcommand{\Lam}{\Lambda}

\newcommand{\bnl}{B_{n, \lam}}

\newcommand{\lipc}{\ell} 
\newcommand{\comfun}{\Delta}


\newcommand{\Xixl}{\Xi_{\x,\lam}}
\newcommand{\Tzl}{\Theta_{\z,\lam}}

\newcommand{\Gxl}{\Gamma_{\x,\lam}}
\newcommand{\bnlab}[2]{\bnl(#1,#2)}

\newcommand{\Hr}{\cal H}









\newcommand{\fo}{f_{\rho}}


\newcommand{\fzl}{f_{\z}^\lambda}
\newcommand{\fl}{f^\lambda}

\newcommand{\fd}{f_\rho}

\newcommand{\fls}{f^{\lamstar}}
\newcommand{\letan}{\wt{L}_{\eta,n}}
\newcommand{\leta}{L_{\eta}}


\newcommand{\hs}{{\mathrm{HS}}}
\newcommand{\tr}[1]{\operatorname{Tr}\left[#1\right]}





\newcommand{\wt}{\widetilde}
\newcommand{\wh}{\widehat}
\newcommand{\ol}{\overline}



\let\originalleft\left
\let\originalright\right
\renewcommand{\left}{\mathopen{}\mathclose\bgroup\originalleft}
\renewcommand{\right}{\aftergroup\egroup\originalright}


\RequirePackage{xstring}

\newcommand{\paren}[2][a]{%
\IfEqCase{#1}{%
{a}{\left(#2\right)}%
{0}{(#2)}%
{1}{\big(#2\big)}%
{2}{\Big(#2\Big)}%
{3}{\bigg(#2\bigg)}%
{4}{\Bigg(#2\Bigg)}%
}[\PackageError{paren}{Undefined option to paren: #1}{}]%
}
\newcommand{\norm}[2][a]{%
\IfEqCase{#1}{%
{a}{\left\lVert#2\right\rVert}%
{0}{\lVert#2\rVert}%
{1}{\big\lVert#2\big\rVert}%
{2}{\Big\lVert#2\Big\rVert}%
{3}{\bigg\lVert#2\bigg\rVert}%
{4}{\Bigg\lVert#2\Bigg\rVert}%
}[\PackageError{norm}{Undefined option to norm: #1}{}]%
}
\newcommand{\brac}[2][a]{%
\IfEqCase{#1}{%
{a}{\left[#2\right]}%
{0}{[#2]}%
{1}{\big[#2\big]}%
{2}{\Big[#2\Big]}%
{3}{\bigg[#2\bigg]}%
{4}{\Bigg[#2\Bigg]}%
}[\PackageError{brac}{Undefined option to brac: #1}{}]%
}
\newcommand{\inner}[2][a]{%
\IfEqCase{#1}{%
{a}{\left\langle#2\right\rangle}%
{0}{\langle#2\rangle}%
{1}{\big\langle#2\big\rangle}%
{2}{\Big\langle#2\Big\rangle}%
{3}{\bigg\langle#2\bigg\rangle}%
{4}{\Bigg\langle#2\Bigg\rangle}%
}[\PackageError{inner}{Undefined option to inner: #1}{}]%
}
\newcommand{\abs}[2][a]{
\IfEqCase{#1}{%
{a}{\left\vert#2\right\rvert}%
{0}{\vert#2\rvert}%
{1}{\big\vert#2\big\rvert}%
{2}{\Big\vert#2\Big\rvert}%
{3}{\bigg\vert#2\bigg\rvert}%
{4}{\Bigg\vert#2\Bigg\rvert}%
}[\PackageError{abs}{Undefined option to abs: #1}{}]%
}

\newcommand{\set}[2][a]{
\IfEqCase{#1}{%
{a}{\left\{#2\right\}}%
{0}{\{#2\}}%
{1}{\big\{#2\big\}}%
{2}{\Big\{#2\Big\}}%
{3}{\bigg\{#2\bigg\}}%
{4}{\Bigg\{#2\Bigg\}}%
}[\PackageError{set}{Undefined option to set: #1}{}]%
}

\newcommand{\scalar}[3]{\langle{ #1},{#2} \rangle_{#3}}



\RequirePackage{color}

\newcommand{\pmathe}[1]{\textcolor{black}{#1}}
\newcommand{\gillesb}[1]{\textcolor{black}{#1}}
\newcounter{nbdrafts}
\setcounter{nbdrafts}{0}
\makeatletter
\newcommand{\checknbdrafts}{
\ifnum \thenbdrafts > 0
\@latex@warning@no@line{**********************************************************************}
\@latex@warning@no@line{* The document contains \thenbdrafts \space draft note(s)}
\@latex@warning@no@line{**********************************************************************}
\fi}

\makeatother



\graphicspath{{./Images/}}


\newcommand{\incl}{T}

\setlength{\parskip}{1em}
\setlength{\parindent}{0pt}

\numberwithin{equation}{section}
\numberwithin{theo}{section}


 \title{Lepski\u\i\/ principle in supervised learning}
\author{Gilles Blanchard}
\address{Institut f\"ur Mathematik, Universit\"at Potsdam,
Karl-Liebknecht-Stra{\ss}e 24-25 14476 Potsdam, Germany}
\email{gilles.blanchard@math.uni-potsdam.de}
\author{Peter Math\'e}
\address{Weierstrass Institute, Mohrenstrasse 39, D-10117 Berlin,
  Germany}
\email{peter.mathe@wias-berlin.de}
\author{Nicole M\"ucke}
\address{Institute for Stochastics and Applications, 
Faculty 8: Mathematics and Physics, 
University of Stuttgart, 
D-70569 Stuttgart Germany }
\email{nicole.muecke@mathematik.uni-stuttgart.de}
\date{Version: \today}
\begin{document}
\begin{abstract}
  In the setting of supervised learning using reproducing kernel methods,
  we propose a data-dependent regularization parameter selection rule that is
  adaptive to the unknown regularity of the target function and is
  optimal {\em both} for the least-square (prediction) error and for the
  reproducing kernel Hilbert space (reconstruction) norm error.
  It is based on a modified Lepski\u\i\/ balancing principle using a varying family of norms.
\end{abstract}
 \maketitle

 \section{Introduction}
 \label{sec:introduction}

 We shall study optimal reconstruction of the regression function in
 supervised learning. Here we are given observations
 \begin{equation}
   \label{eq:base}
   Y_{i} := f_\rho(X_{i}) + \varepsilon_{i},\qquad i=1,\dots,n,
 \end{equation}
 at i.i.d. data points~$X_{1},\dots,X_{n}$, drawn according to some
 (unknown) distribution~$\rho_X$ on a space~$\X$. If~$\hat f_{z}$ is any predictor for
 the regression function~$f$ based on a
 sample~$\z:= \paren{X_{i},Y_{i}}_{i=1}^{n}$\;, then we measure the
 (squared) loss (also called excess squared prediction risk) as
 \begin{equation}
   \label{eq:loss}
   \EE(f,f_{z}) = \E_{X\sim \rho_X} \abs{\paren{f_\rho(X) - \hat f_{\z}(X)}}^{2} = \norm{f_\rho - \hat f_{\z}}_{L^{2}(\rho_X)}^{2}.
 \end{equation}
 We shall adopt a framework commonly considered in learning theory for so-called
 {\em reproducing kernel} methods, and assume that both the target $f_\rho$ and its
 estimation $\hat f_\z$
 belong to a given reproducing kernel Hilbert space (rkhs) $\Hr$. In this setting it is also
 relevant to study the estimation loss in $\Hr$-norm,
 $ \norm[1]{f_\rho - \hat f_\z}_\Hr$. In particular, one application of interest is when $f_\rho=Ah_\rho$
 with $h_\rho$ an element of a Hilbert space $\Hr_0$, $A$ is a (known) linear operator from $\Hr_0$ to the set of real functions $\X \rightarrow \R$, and one is interested in reconstructing $h_\rho$ with small $\Hr_0$-norm error. This setting known as {\em inverse regression} can be shown to be formally equivalent
 to the rkhs setting provided $A$ satisfies some regularity properties (namely continuity
 of all evaluation functionals $h\mapsto (Ah)(x)$), see~\cite{BlaMuc18,discretization}. In this type of application, it is of interest to get an optimal control of
 the direct (or prediction) error $\norm[1]{A(h_\rho - \hat h_\z)}_{L^2(\rho_X)}$, as well as of the reconstruction error $\norm[1]{h_\rho - \hat h_\z}_{\Hr_0}$; the latter error coincides
 with $\norm[1]{f_\rho - \hat f_\z}_{\Hr}$ for a suitable rkhs $\Hr$ depending on $A$ and isometric to $\Hr_0$~\cite{BlaMuc18}.

 Specifically, we shall confine the analysis to estimators constructed
 from a \emph{linear regularization scheme}~$\glam$ as
 \begin{equation}
   \label{eq:fzl}
   \fzl := \glam(B_{\x}) \incl_{\x}^{\ast}\y,\qquad 0< \lambda \leq \kappa^2,
 \end{equation}
\pmathe{ with~$\kappa$ specified in~\S~\ref{sec:defops}.}

 where $\y=(y_1,\ldots,y_n) \in \R^n$, and $T^*_\x$, $B_\x$ are data-dependent operators
 from $\R^n$, resp $\Hr$, to $\Hr$, 
 and $g_\lam$ is a regularization function; these objects will
 be more precisely defined in the next section. The regularization parameter $\lambda$
 will drive the bias/variance trade-off, with smaller values of $\lambda$ corresponding to less regularization, that is, smaller bias and larger variance.

 The above setting has been analyzed in numerous previous studies, and a literature overview
 will be necessarily partial. Let us mention the seminal works~\cite{CaponnettoDevito2007,discretization,SmaleZhou2007,SteHusSco09,mendelson2010} which focused (mainly) on
 optimal bounds for the prediction risk, for Tikhonov regularization schemes. Relationship
 of the model~\eqref{eq:base}, viewed through the lens of reproducing kernel methods, to the
 inverse problem literature was pioneered in~\cite{learning,Gerfo,discretization}, opening the way to using more general  regularization schemes of the form~\eqref{eq:fzl}. Statistical performance
 bounds covering such schemes were established in~\cite{CaponnettoYao2010} for prediction risk,
 \cite{BPR2007} both for prediction and reconstruction risk (albeit in a worst-case setup concerning
 the spectral decay of the kernel integral operator, giving rise to so-called ``slow rates''),
 and \cite{BlaMuc18} concerning fast rates for both risks.

 All of these studies (with the exception of~\cite{CaponnettoYao2010} which considered
 data-dependent parameter selection, see below) studied convergence rates under
 {\em a priori} known regularity assumptions on the target function $f_\rho$ (expressed under
 the form of specific {\em source conditions}, which will be defined below) and, for
 fast (minimax optimal) rates, additional {\em a priori} known assumptions on the spectral decay of the the kernel
 operator. The optimal choice of the regularization parameter $\lambda$ depends of these conditions;
 however in most practical applications, such regularity assumptions are unknown to the user, and
 it is a fundamental task to select the regularization parameter~$\lambda$ in a close to optimal
 way from the data only; this is the focus of the present paper.
 
 Several strategies are known for this.  Concerning the prediction risk,
 a standard approach is hold-out or cross-validation (considered in~\cite{CaponnettoYao2010}),
 which picks among a set of estimators $(\fzl)_{\lam \in \Lambda}$ (with $\Lambda$ a finite set, typically a geometric discretization of a certain parameter range) the one having smallest empirical error when evaluated on an independent, "hold-out" sample of the same size as the original. It is an established fact that this method is able to attain close to optimal rates
 in a variety of situations, since it satisfies an oracle-type inequality with respect to the
 considered estimator family, at least if it is assumed that $Y$ is bounded \cite{CaponnettoYao2010,BlaMas06}. It is crucial for this that the empirical excess risk is an accessible unbiased estimate of the population excess risk. Therefore, such a method will not be applicable for the $\Hr$-norm risk (reconstruction error). When assuming smoothness properties under the form of source conditions for $f_\rho$, it is known that there exists a choice of the regularization parameter which is simultaneously minimax optimal over that smoothness class for the direct and for the reconstruction risk. One might therefore hope that
 the hold-out method parameter selection will return a parameter choice that also features optimality properties for the reconstruction risk: however, existence of one regularization parameter that is
 good for both risks (that is to say, is minimax rate optimal over a certain regularity class) is no guarantee that
 {\em any arbitrary} parameter which is good for prediction (e.g. as selected by the hold-out
 method), is also good for reconstruction: there could in principle be a range of parameters that are
 good for prediction, but only one element in that range that is also good for reconstruction,
 without guarantee that precisely that one will be selected by hold-out. 
 We must therefore consider possible other approaches.
 
 A prominent parameter choice strategy which uses only differences of
 estimators is based on Lepski\u\i's original idea~\cite{Lepski90}. This idea can be  adopted in learning.
 Concerning the prediction risk ($L^2(\rho_X)$-norm), this was studied
 in~\cite{DeVito2010} (obtaining ``slow rates''); and
 further in the  monograph~\cite{LP2013}, and the paper~\cite{LuMaPer2018}, where fast prediction rates where obtained under some 'minimal smoothness' assumptions. 
 Lepski\u\i's principle was recently used in learning within the context of empirical risk minimization over balls in the reproducing kernel Hilbert space. Here the size of the ball, which reflects the inherent solution smoothness, is chosen adaptively, see~\cite{SteGr2018} for details. This approach is not able to take into account additional properties of the marginal distribution~$\rho_X$, as these are quantified in the effective dimension (in other words, once again only "slow rates" are considered in that work).
 We mention that Lepski\u\i's principle can
 also be used for adaptation in the reconstruction risk ($\Hr$-norm), but, as discussed above, any method concentrating only on adaptation for the one risk might be sub-optimal for the other.

 The contributions of the present study are the following.
 We derive a data-dependent regularization parameter selection rule, based on a
 modified Lepski\u\i's principle, that is {\em simultaneously} adaptive in the sense of
 the prediction and the reconstruction risks. The simultaneous adaptation property
 is a new aspect of the method that has not appeared previously, up to our knowledge.
 This is achieved by using a selection rule based on a varying norm to measure
 difference of estimators, wherein the norm depends on the regularization parameter.
 We establish results showing simultaneous oracle-type inequalities for both risks, giving rise to
 fast (minimax optimal) convergence rates adapting to unknown source conditions of a general form,
 as well as to unknown spectral decay properties of the kernel integral operator.
 We insist that in contrast to some earlier studies,
 the rule is entirely data-dependent: in particular the variance of the
 estimators, as represented by the so-called {\em effective dimensionality} of each
 estimator, and depending on the (a priori unknown) spectral decay of the kernel
 integral operator, is also estimated empirically, as is the ``minimal'' regularization
 parameter, which also depends on said effective dimension. In comparison to~\cite{LuMaPer2018}
 for the prediction risk, we also remove the ``minimum smoothness'' requirement.
 Finally, we argue that the selection rule is simple to put in practice, as the varying
 squared norm used in the rule is a linear combination of the (squared) empirical norm and
 $\Hr$-norms of the estimators, both of which are readily accessible. The main ideas in this paper, especially concerning the extended Lepski\u\i's principle, were introduced under a preliminary form in the third author's Ph.D. thesis \cite{nicole_phd}.

 This paper is organized as follows: in Section~\ref{sec:notation}, we introduce the setting and
 notation for supervised learning using (reproducing) kernel-based estimators and general regularization schemes. In Section~\ref{sec:basic-results}, we present a first
 simultaneous adaptation result using the newly introduced modified Lepski\u\i's principle,
 in a somewhat abstracted setting allowing us to put into light the main ideas.
 In Section~\ref{sec:ass3}, we present the actual method and bounds in the supervised
 learning setting. The appendix contains technical proofs,
 including in particular probabilistic estimates for various error terms used as fundamental
 building blocks for the main results. While estimates of a similar flavor have appeared in
 various earlier studies on reproducing kernel learning methods, we have adopted here
 a self-contained
 approach, in passing streamlining or slightly extending earlier arguments. For instance, we
 introduce a technical device allowing us to let go of the assumption
 of operator monotonicity 
 in the generalized source condition function appearing in all earlier studies.


\section{Mathematical Framework in Supervised Learning}
\label{sec:notation}

\subsection{Operators}
\label{sec:defops}
We let $\Z = \X \times \R$ denote the sample space, where the input space $\X$ is a standard Borel space. 
The relation between the input $x \in \X$ and the output $y \in \R$ is
described by a \pmathe{unknown joint} probability distribution $\rho$ on $\X \times \R$. By $\rho_X$ we denote 
the $\X-$ marginal distribution. 
\\
Based on  a {\it training set} \;$\z=(\x, \y)=((x_1, y_1),...,(x_n,y_n))$ drawn independently and identically distributed according to $\rho$, the goal in supervised learning is to find a function $\hat f=\hat f_{\z}: \X \to \R$ with small expected error 
\[  \EE(\hat f)=\int_{\X \times\R} (y - \hat f(x))^2 \;d\rho(x,y) \;. \]
It is well known that the minimizer of $\EE(\cdot)$ over $L^2(\X, \rho_X)$ is the regression function, denoted by $f_{\rho}$, satisfying
\[  \| f- f_{\rho} \|^2_{\rho_X} = \EE(f)-\EE(f_{\rho}) \;, \qquad f \in L^2(\X, \rho_X)\;, \]
with $\|\cdot\|_{\rho_X}$ being the $L^2(\X,\rho)-$ norm.
\\
It is common to search for an estimator in a hypothesis space $\h \subset L^2(\X, \rho_X)$, which we choose to be a separable reproducing kernel Hilbert space (RKHS) $\h=\h_K$, 
arising from a  measurable positive semi-definite kernel $K: \X \times \X \longrightarrow \R$. For any $x\in \X$ we denote $K_x$ the element of $\h$
given by the function $t \mapsto K(x,t)$. We recall the fundamental ``reproducing'' property
$f(x) = \inner{K_x,f}_\h$ holding for any $f\in \H,x\in \X$.
Additionally, we let $K$ be bounded by 
$\kappa^2=\sup_{x\in \X}K(x,x)$. 
\\
Following previous studies~\cite{learning,BPR2007,BlaMuc18} we consider the continuous inclusion $\incl=\incl_{\rho_X}: \h \hookrightarrow  L^2(\X, \rho_X)$ and its 
adjoint $\incl^*=\incl_{\rho_X}^*: L^2(\X, \rho_X) \rightarrow \h$, leading 
to the kernel second moment operator $B={B_{\rho_X}}=\incl^*\incl: \h \rightarrow \h$,     
which can be shown to be positive, self-adjoint and trace class (and
hence in particular compact), see e.g.~\cite{learning}. 
Given a sample $\x=(x_1, \ldots, x_n) \in \X^n$, we define the empirical counterparts $\incl_{\x}: \h \rightarrow  \R^n$ by 
$(\incl_{\x}f)_i= f(x_i)=\la f,  K_{x_i} \ra_{\h}$ and, endowing $\R^n$ with the normalized scalar product
$\inner{u,v} = n^{-1} \sum_{i=1}^n u_i v_i$,  its adjoint~$\incl^*_{\x}: \R^n \rightarrow \h$
given by $\incl^*_{\x} u = n^{-1} \sum_{i=1}^n u_i K_{x_i}$.
The empirical 
second moment operator is then given by $B_{\x}= \incl_{\x}^*  \incl_{\x}$. Moreover, we have the relations $B
=\E_{X \sim \rho_X}[K_X \otimes K_X^*]$,
$B_{\x}=n^{-1} \sum_{i=1}^n K_{x_i} \otimes K_{x_i}^*$, \pmathe{and
that~$\norm{B_{\x}}_{\Hr\to \Hr},\norm{B}_{\Hr\to \Hr}\leq \kappa^{2}$. }
We refer to \cite{discretization,BlaMuc18} for more details.

We shall use the moment inequality in Hilbert space,
see~\cite[Chapt.~2.3]{EHN1996}, which asserts (for an arbitrary
non-negative self-adjoint operator~$B$) that for every~$0 \leq \theta
\leq 1$ it holds true that
\begin{equation}
  \label{eq:moment-ineq}    
  \|B^{\theta}f\|_{\Hr} \leq \|B f\|^{\theta}_{\Hr}
  \; \|f\|^{1- \theta}_{\Hr},\quad f\in\Hr,
\end{equation}

\subsection{Regularization}
\label{sec:regularization}

Regularization methods have emerged as a useful tool in Learning Theory for tackling the problem of overfitting and have been  
introduced in e.g. \cite{wahba}, \cite{poggio92}, 
\cite{hastie01}.
We confine ourselves to the class of {\it spectral regularization methods} $\{g_{\lam}\}_\lam$, examined in e.g. 
\cite{Gerfo,BPR2007,CaponnettoDevito2007,BlaMuc18} but also in \cite{bissantz} in a statistical setting. Here, 
$0< \lam \leq  \kappa$ denotes the regularization parameter. This class of methods contains the well known Tikhonov regularization, 
Landweber iteration or spectral cut-off. 
We recall its definition. 

\begin{defi}[\protect{\bf Regularization function}]
Let $g: (0,\kappa^2]\times [0, \kappa^2] \longrightarrow \R$ be a function and write 
$g_{\lam}=g(\lam, \cdot)$. The family $\{g_{\lam}\}_{\lam}$ is called 
{\it regularization function}, if there are positive constants $\gamma_0$ and $\gamma_{-1}$ such that for any $0 < \lam \leq \kappa^2$:
\begin{equation}
  \label{eq:supg}
\sup_{0<t\leq \kappa^2}|g_{\lam}(t)| \leq \frac{\gamma_{-1}}{\lam} \,,
\end{equation}
and
\begin{equation}
    \label{eq:resid}
\sup_{0<t\leq \kappa^2}|r_{\lam}(t)| \leq \gamma_0 \,,
\end{equation}
where $r_{\lam}(t):= 1-g_{\lam}(t)t$ is the residual function.
Observe that the latter property implies in particular
\begin{equation}
  \label{eq:supgt}
\sup_{0<t\leq \kappa^2}|tg_{\lam}(t)| \leq \gamma_0+1 \,.
\end{equation}
\end{defi}

Given a regularization function~$g_{\lam}$ we use spectral calculus to
apply this to the self-adjoint nonnegative operator~$B_{\x}$. Hence we consider the
family~$\fzl$ of (linear) estimators given by~\eqref{eq:fzl}, repeated here for convenience:
\[
\fzl := g_{\lam}(B_{\x}) T_{\x}^{\ast}\y,\quad 0 < \lam\leq \kappa^2,
\]
The above can be represented equivalently, using the ``shift'' formula,
as a kernel expansion
\begin{equation} \label{eq:shift}
  f_\z^\lam = n^{-1} \sum_{i=1}^n \al^\lam_i K_{x_i} = T^*_\x \bm{\al}^\lam, \text{ where }
  \bm{\al}^\lam := g_\lam(T_\x T_\x^*)\y;
\end{equation}
since $T_\x T_\x^*$ is a $n\times n$ matrix (whose $(i,j)$-entry is equal to $n^{-1}K(x_i,x_j)$),
computation of $\fzl$ boils down to numerically feasible matrix computations.

The objective of this study is to provide adaptivity results for
estimators~$\fzl$, where the final estimator is defined using an {\it
  a posteriori} parameter choice $\hat \lam = \hat \lam(n,\z)$ (see
Definition~\ref{def:lepest}), resulting in~$f_{\z}^{\hat\lam}$.

The quality of these estimators depends on the capability of the
regularization to take into account smoothness, and this is concerned
with the notion of a qualification. To this end we agree with the
following concept.
\begin{defi}[\protect{\bf Index function}] \label{def:index} A continuous nondecreasing
  function~$\phi\colon [0,\kappa^{2}]\to \R^{+}$ which obeys~$\phi(0)=0$  is called an index
  function. Index functions are endowed with the following partial order:
  for two index functions $\phi_1,\phi_2$ we have $\phi_1 \prec \phi_2$
  (we say that $\phi_2$ {\em covers} $\phi_1$) if
  $\phi_2/\phi_1$ is nondecreasing for $x>0$.  
\end{defi}
\begin{defi}[\protect{\bf Qualification}]
  \label{def:qualif}
We call an index function~$\psi$ a \emph{qualification} of the chosen regularization function~$g_\lam$ if there is a constant~$\gamma_\psi$ such that
\begin{equation}\label{ref:qualif}
\sup_{0<t\leq \kappa^2}|r_{\lam}(t)|\psi(t)\leq \gamma_\psi \psi(\lam),\quad \lam>0.
\end{equation}
\end{defi}

We mention a standard and useful consequence of the above definitions obtained by
interpolation.
\begin{prop}
  \label{prop:interpreg}
  Let $g_\lambda$ be a regularization function with qualification $\psi$.
  Let $\ol{\gamma}_\psi:= \max(\gamma_0+1,\gamma_{-1},\gamma_\psi)$. Then
  for any $\lambda\in(0,\kappa^2]$:
  \begin{itemize}
  \item For any $r \in [0,1]$:
    \begin{equation}
      \sup_{0<t\leq \kappa^2}|g_{\lam}(t)|t^r \leq
      \ol{\gamma}_\psi \lam^{r-1}.
    \end{equation}
  \item For any index function $\phi$ covered by $\psi$:
    \begin{equation}
      \sup_{0<t\leq \kappa^2}|r_{\lam}(t)|\phi(t) \leq
      \ol{\gamma}_\psi \phi(\lambda).
    \end{equation}
  \end{itemize}
\end{prop}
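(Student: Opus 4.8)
The plan is to deduce both bounds from the defining inequalities of the regularization function together with the qualification property, using the moment/interpolation inequality to handle intermediate exponents. I will treat the two items separately.

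For the first item, the strategy is to interpolate between the two endpoint cases $r=0$ and $r=1$. At $r=1$ I would invoke \eqref{eq:supgt}, which gives $\sup_t |g_\lam(t)t| \leq \gamma_0+1$; since $\lam^{r-1}=\lam^0=1$ at $r=1$, and $\gamma_0+1 \leq \ol{\gamma}_\psi$, the claimed bound holds at that endpoint. At $r=0$ I would invoke \eqref{eq:supg}, which gives $\sup_t |g_\lam(t)| \leq \gamma_{-1}/\lam = \gamma_{-1}\lam^{-1}$, and again $\gamma_{-1}\leq \ol{\gamma}_\psi$ settles this endpoint. For general $r\in(0,1)$, I would write $|g_\lam(t)|t^r = \paren{|g_\lam(t)|t}^r \paren{|g_\lam(t)|}^{1-r}$ and bound each factor by its respective endpoint estimate, obtaining $|g_\lam(t)|t^r \leq (\gamma_0+1)^r \, (\gamma_{-1}\lam^{-1})^{1-r} \leq \ol{\gamma}_\psi \, \lam^{r-1}$, where the last step uses $(\gamma_0+1)^r\gamma_{-1}^{1-r}\leq \ol{\gamma}_\psi$. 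This multiplicative splitting is the clean way to avoid applying \eqref{eq:moment-ineq} directly to a family of functions; the pointwise factorization of the scalar function does the same job.

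For the second item, I would argue directly from the qualification. Fix an index function $\phi$ with $\phi \prec \psi$, so that by Definition \ref{def:index} the ratio $\psi/\phi$ is nondecreasing on $(0,\kappa^2]$. For $0<t\leq\kappa^2$ I would consider two regimes. When $t\leq\lam$, monotonicity of $\psi/\phi$ gives $\phi(t)/\psi(t)\leq \phi(\lam)/\psi(\lam)$, hence $\phi(t)\leq \psi(t)\,\phi(\lam)/\psi(\lam)$; combining with the qualification \eqref{ref:qualif}, $|r_\lam(t)|\phi(t)\leq |r_\lam(t)|\psi(t)\,\phi(\lam)/\psi(\lam)\leq \gamma_\psi\psi(\lam)\,\phi(\lam)/\psi(\lam)=\gamma_\psi\phi(\lam)$. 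When $t>\lam$, I would instead use the residual bound \eqref{eq:resid} together with monotonicity of $\phi$: $|r_\lam(t)|\phi(t)\leq \gamma_0\,\phi(t)$, and since $\phi$ is nondecreasing I still need to control $\phi(t)$ for $t>\lam$, which is where the covering relation is again essential. Taking the supremum over both regimes yields the bound $\sup_t |r_\lam(t)|\phi(t)\leq \ol{\gamma}_\psi\,\phi(\lam)$.

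I expect the main obstacle to be the large-$t$ regime $t>\lam$ in the second item: there $\phi(t)$ can exceed $\phi(\lam)$, so the naive residual bound $\gamma_0\phi(t)$ is not directly comparable to $\phi(\lam)$. The resolution is that the covering hypothesis $\phi\prec\psi$ must be used to transfer the qualification bound to $\phi$ across the \emph{whole} range, rather than splitting at $\lam$. Concretely, I would avoid the case split and instead write $\phi(t)=\paren{\phi(t)/\psi(t)}\psi(t)$ everywhere and bound $|r_\lam(t)|\phi(t)=\paren{\phi(t)/\psi(t)}\,|r_\lam(t)|\psi(t)$; the factor $|r_\lam(t)|\psi(t)$ is $\leq\gamma_\psi\psi(\lam)$ by \eqref{ref:qualif}, but this leaves $\phi(t)/\psi(t)$, which is bounded by $\phi(\lam)/\psi(\lam)$ only for $t\leq\lam$. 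The cleanest fix is to observe that $\phi$ is itself a qualification-admissible index function covered by $\psi$, and apply \eqref{ref:qualif} with $\psi$ replaced by the covered function after checking that covering is exactly the property ensuring $\sup_t|r_\lam(t)|\phi(t)/\phi(\lam)$ inherits the same constant; making this transfer rigorous, rather than the endpoint interpolation of the first item, is the delicate point.
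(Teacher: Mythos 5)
Your first item is correct and is exactly the paper's argument: the pointwise factorization $|g_\lam(t)|t^r = \paren{|g_\lam(t)|t}^r\,|g_\lam(t)|^{1-r}$ combined with the endpoint bounds \eqref{eq:supg} and \eqref{eq:supgt} is precisely what the paper does.

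The second item contains a genuine gap, rooted in an inverted monotonicity. By Definition~\ref{def:index}, $\phi \prec \psi$ means $\psi/\phi$ is nondecreasing, equivalently $\phi/\psi$ is \emph{nonincreasing}; hence $\phi(t)/\psi(t) \leq \phi(\lam)/\psi(\lam)$ holds for $t \geq \lam$, not for $t \leq \lam$ as you assert (twice). This inversion sends the whole analysis the wrong way: you deploy the qualification-plus-covering argument on the region $t\leq \lam$ (where your intermediate inequality is false, even though the conclusion there is trivially true by other means), and you are left believing that no tool is available on $t \geq \lam$ --- which is exactly the regime where covering is designed to work. Your proposed fix, to ``observe that $\phi$ is itself a qualification-admissible index function covered by $\psi$, and apply \eqref{ref:qualif} with $\psi$ replaced by the covered function,'' is circular: the claim that a function covered by $\psi$ inherits the qualification bound with constant $\ol{\gamma}_\psi$ is precisely the statement to be proved. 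The correct argument is the case split you started with, but with the regimes swapped --- and this is the paper's proof: for $t \in [0,\lam]$, use the residual bound \eqref{eq:resid} and monotonicity of $\phi$ to get $|r_\lam(t)|\phi(t) \leq \gamma_0\,\phi(t) \leq \gamma_0\,\phi(\lam)$, with no covering needed; for $t \in [\lam,\kappa^2]$, write
\begin{equation*}
  |r_\lam(t)|\phi(t) \;=\; |r_\lam(t)|\psi(t)\,\frac{\phi(t)}{\psi(t)}
  \;\leq\; \gamma_\psi\,\psi(\lam)\,\frac{\phi(\lam)}{\psi(\lam)}
  \;=\; \gamma_\psi\,\phi(\lam),
\end{equation*}
using the qualification \eqref{ref:qualif} together with the (correctly oriented) fact that $\phi/\psi$ is nonincreasing, hence $\phi(t)/\psi(t)\leq\phi(\lam)/\psi(\lam)$ for $t\geq\lam$. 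Both cases are then bounded by $\ol{\gamma}_\psi\,\phi(\lam)$.
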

\begin{proof}
  For the first statement, we have using~\eqref{eq:supg},~\eqref{eq:supgt}, for any $(t,\lambda)\in(0,\kappa^2]\times[0,\kappa^2]$:
  \[
    \abs{g_\lambda(t)}t^r = \abs{g_\lambda(t)t}^r \abs{g_\lambda(t)}^{1-r}
    \leq (\gamma_0+1)^r(\gamma_{-1}\lambda^{-1})^{1-r} \leq \ol{\gamma}_{\psi} \lambda^{r-1}.
  \]
  For the second statement, for any $\lambda \in (0,\kappa^2]$, if $t \in [0,\lambda]$ we have by monotonicity of $\phi$:
  \[
    \abs{r_\lambda(t)\phi(t)} \leq \gamma_0 \phi(\lambda) \leq \ol{\gamma}_\psi \phi(\lambda),
  \]
  while for $t \in [\lambda,\kappa^2]$ we have by monotonicity of $\psi/\phi$ and
  $\psi$-qualification:
  \[
    \abs{r_\lambda(t)\phi'(t)} = \abs{r_\lambda(t)\psi(t)} \frac{\phi(t)}{\psi(t)} \leq
    \gamma_\psi \psi(\lambda) \frac{\phi(\lambda)}{\psi(\lambda)} \leq
    \ol{\gamma}_\psi \phi(\lambda);
  \]
  together these two cases bring the announced statement.
\end{proof}



Thus, if $\psi$ covers $\phi$ and $\psi$ is a qualification of $g_\lambda$, then so is
$\phi$.


\subsection{Effective Dimension and Empirical Effective Dimension}

The {\it effective dimension} is a key quantity for deriving learning rates in (semi-) supervised learning, parametrizing the effect  
of the input data through $\rho_X$, encapsulated in the second moment operator $B$.   
For $\lambda \in (0,\kappa^2]$ we set
\[ \NN(\lam) = \tr{\;( B+ \lam)^{-1}  B\;}  \; . \]
The empirical approximation, the {\it empirical effective dimension} 
\[ \NN_{\x}(\lam) = \tr{\;( B_{\x}+ \lam)^{-1}  B_{\x}\;}  \]
can be computed from a set of unlabeled input data $\x$. Just like $\NN(\lam)$ is crucial for finding an {\it a priori} parameter choice rule leading 
to optimal rates of convergence, the empirical $ \NN_{\x}(\lam)$ is essential for defining an {\it a posteriori} parameter choice rule.  
As functions of~$\lam>0$ both the effective dimension and the
empirical effective dimension are decreasing. Since $B$ is trace
class, we always have  $\NN(\lam) \leq \lam^{-1} \tr{B} \leq \lam^{-1} \kappa^2 $ as an
upper bound.

Moreover, for~$q>1$, and  since $t/(t+\lam/q) \leq qt/(t+\lam)$ for positive $t,\lam$,
it follows that
\begin{equation}
  \label{eq:nqlambda}
  \NN_\x(\lambda/q) \leq q \NN_\x(\lambda),\quad \lam>0,
\end{equation}
which will prove useful, below.

\section{Generalized Lepski\u\i{} principle}
\label{sec:basic-results}


Our goal is to establish an adaptive choice of the regularization
parameter~$\lam>0$ as this is achieved by the Lepski\u\i{}
principle. This works with a finite number of candidate estimates, and
we therefore fix a finite grid
\begin{equation}
  \label{eq:lamdam}
  \Lambda := \set{\lambda_{j},\quad  \kappa^2 \geq \lam_0 > \lam_1 > \ldots > \lam_m = \lam_{\min} > 0},
\end{equation}
with $m\geq 1$; we shall confine  its choice later. 


\pmathe{We fix the sample size~$n$.}
It will be transparent from the subsequent analysis that (a
priori) error estimates involve an (unknown, non-positive, non-decreasing)
function~$\lam\mapsto \A(\lam)$ \gillesb{(informally referred to as ``approximation error term'')}, and the (\pmathe{positive}, decreasing) function
\begin{equation}
   \label{eq:es-function}
   \es(n,\lam) := 
     \frac{\sigma\sqrt{\NN(\lam) \vee 1} + M/5}{\sqrt{ \lam n}},\quad \lambda
  >0,
\end{equation}
\gillesb{(informally referred to as ``estimation error term'')},
\pmathe{where the parameters~$M$ and~$\sigma$ are model specific.  Details for the roles
of~$M$ and~$\sigma$ (related to noise moments) will be given
in Section~\ref{sec:ass3}. These parameters are assumed to be known -- or an upper bound on it.}
  
Also, in order to access the prediction norm we shall use the isometry
 \begin{equation}
  \label{eq:isometry}
  \norm{g}_{L^{2}(\rho)} = \norm{B^{1/2}g}_{\Hr},\quad g\in\Hr.
\end{equation}
for which we also have an empirical counterpart for the empirical measure
$\wh{\rho}_n = \frac{1}{n} \sum_{i=1}^n \delta_{z_i}$:
\begin{equation}
 \label{eq:isometry-empirical}
 \paren{\frac{1}{n} \sum_{i=1}^n g^2(X_i)}^{\frac{1}{2}} =  \norm{g}_{L^{2}(\wh{\rho}_n)} = \norm{B_{\x}^{1/2}g}_{\Hr},\quad g\in\Hr.
\end{equation}
In particular, error bounds are given from 
\begin{equation}
  \label{eq:err-bounds-general}
 \norm{g}_{L^{2}(\rho)} = \norm{B^{1/2}g}_{\Hr}\leq \norm{\paren{B +
     \lam}^{1/2}g}_{\Hr},\quad g\in\Hr.   
\end{equation}
To apply the Lepski\u\i{} principle, the unavailable (population)
functions~$\es(n,\lam)$ and~$B$ will be replaced by their sample
version~$\es_{\x}(n,\lam)$ (i.e. wherein $\NN$ is replaced by $\NN_{\x}$) and~$B_{\x}$. With high probability, and
this will be formulated precisely later, quantities involving~$B_{\x}$
and~$\es_{\x}$ will be close to the population quantities.

In order to separate clearly the arguments, we will concentrate in this section on a purely deterministic argument underlying the Lepski\u\i{} principle, which will be first expressed in
a somewhat abstract version.
More precisely, we make the following assumption, \pmathe{where~$\Lam$ is the
grid specified in~(\ref{eq:lamdam})}.

\begin{assumption}
  \label{ass:error}
  There exists a positive self-adjoint operator $A$ and constant $C>0$ such that, for any $\lambda \in \Lambda$, there exists a
  function $\fl \in \Hr$ satisfying
  \begin{equation}
    \label{eq:assmp1}
    \norm{\paren{A + \lam}^{1/2}\paren{\frho - \fl}}_{\Hr} \leq C \sqrt \lam
  \paren{\A(\lam) +
  \es(\lam)},
\end{equation}
where the function~$\lam \in [0,\kappa^2] \mapsto \A(\lam) \in \R_+$ is non-decreasing with $\A(0) =
0$; and the function~$\lam\in [0,\kappa^2] \mapsto \sqrt{\lam} \es(\lam) \in \R_+$ is non-increasing.
\end{assumption}

The following estimate will be
further used in the sequel. It gives a consequence and interpretation
of~\eqref{eq:assmp1}. 
\begin{prop}
  \label{prop:interptool}
  Asssume $A$ is a positive self-adjoint operator on $\Hr$, and the element $h \in \Hr$ satisfies $\norm{(A+\lambda)^{\frac{1}{2}}h} \leq \sqrt{\lam} F(\lambda)$ for some $\lambda>0$ and
  \gillesb{some function $F:\R_+ \rightarrow \R_+$.} Then for any $s\in[0,\frac{1}{2}]$:
  \begin{equation}
    \norm{A^s h} \leq \lambda^s F(\lambda).
  \end{equation}
\end{prop}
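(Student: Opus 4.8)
The plan is to reduce the claim to an elementary scalar estimate through the functional calculus for the self-adjoint operator $A$. Since $\lambda>0$ makes $(A+\lambda)^{1/2}$ boundedly invertible, I would first factor
\[
A^s h = A^s (A+\lambda)^{-1/2}\,(A+\lambda)^{1/2} h,
\]
which is legitimate because the hypothesis $\norm{(A+\lambda)^{1/2}h}<\infty$ places $h$ in the domain of $(A+\lambda)^{1/2}=$ domain of $A^{1/2}$, on which $A^s$ with $s\leq \frac12$ is defined. Taking norms and inserting the hypothesis gives
\[
\norm{A^s h} \leq \norm{A^s(A+\lambda)^{-1/2}}_{\Hr\to\Hr}\,\norm{(A+\lambda)^{1/2}h} \leq \norm{A^s(A+\lambda)^{-1/2}}_{\Hr\to\Hr}\,\sqrt{\lambda}\,F(\lambda).
\]

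It then remains to control the operator norm. Since $A$ is positive self-adjoint, its spectrum is contained in $[0,\infty)$, and the spectral theorem identifies $\norm{A^s(A+\lambda)^{-1/2}}_{\Hr\to\Hr}$ with $\sup_{t\geq 0} t^s (t+\lambda)^{-1/2}$. The heart of the argument is the scalar inequality: for $s\in[0,\frac{1}{2}]$ and every $t\geq 0$ one has $t^s \leq (t+\lambda)^s$ (because $s\geq 0$) and $(t+\lambda)^{s-1/2}\leq \lambda^{s-1/2}$ (because $s-\frac{1}{2}\leq 0$ and $t+\lambda\geq\lambda$), so that
\[
\frac{t^s}{(t+\lambda)^{1/2}} \leq (t+\lambda)^{s-1/2} \leq \lambda^{s-1/2}.
\]
Substituting this back yields $\norm{A^s h}\leq \lambda^{s-1/2}\sqrt{\lambda}\,F(\lambda) = \lambda^s F(\lambda)$, which is the assertion.

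There is no genuine obstacle here; the only points deserving a moment's care are the two boundary exponents, which serve as a useful sanity check. At $s=0$ the operator norm equals $\lambda^{-1/2}$ and the bound reduces to $\norm{h}\leq F(\lambda)$, while at $s=\frac{1}{2}$ the supremum is at most $1$ and the bound reads $\norm{A^{1/2}h}\leq\sqrt{\lambda}\,F(\lambda)$, both consistent with $\lambda^s F(\lambda)$. I note that one could instead try to invoke the moment inequality~\eqref{eq:moment-ineq}, but the direct spectral-calculus computation above is preferable because it exploits the hypothesis in exactly the form $\norm{(A+\lambda)^{1/2}h}$ in which it is supplied, rather than in terms of $\norm{Ah}$ and $\norm{h}$ separately.
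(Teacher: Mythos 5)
Your proof is correct, but it follows a genuinely different route from the paper's. The paper treats only the two endpoint exponents: for $s=\tfrac{1}{2}$ it uses the same factorization as you, $\norm[1]{A^{1/2}h}\leq\norm[1]{A^{1/2}(A+\lam)^{-1/2}}\,\norm[1]{(A+\lam)^{1/2}h}$ with the trivial bound $\norm[1]{A^{1/2}(A+\lam)^{-1/2}}\leq 1$; for $s=0$ it uses $\norm{h}\leq\lam^{-1/2}\norm[1]{(A+\lam)^{1/2}h}$; and it then obtains all intermediate $s\in[0,\tfrac{1}{2}]$ by the moment (interpolation) inequality~\eqref{eq:moment-ineq}, in the form $\norm{A^{s}h}\leq\norm{h}^{1-2s}\norm[1]{A^{1/2}h}^{2s}$. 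You instead handle every $s$ in one stroke by bounding the operator norm $\norm[1]{A^{s}(A+\lam)^{-1/2}}$ through the scalar estimate $t^{s}(t+\lam)^{-1/2}\leq(t+\lam)^{s-1/2}\leq\lam^{s-1/2}$, valid on all of $[0,\infty)$. Your argument is slightly more self-contained (it never invokes the interpolation inequality, only elementary spectral calculus) and exploits the hypothesis in exactly the form given; the paper's argument, in exchange, only ever needs operator-norm facts at the two endpoints and reuses a tool~\eqref{eq:moment-ineq} already stated for other purposes, a pattern (endpoints plus interpolation) that the authors also deploy elsewhere, e.g.\ in Proposition~\ref{prop:interpreg}. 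Your attention to the domain question (that $h\in D\bigl((A+\lam)^{1/2}\bigr)=D(A^{1/2})$, on which $A^{s}$ is defined for $s\leq\tfrac{1}{2}$) is a point the paper's proof passes over silently, and is welcome if one wants the statement for genuinely unbounded $A$.
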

\begin{proof}
  For $s=\frac{1}{2}$, it holds
  \[
    \norm[1]{A^{\frac{1}{2}}h} \leq
    \norm[1]{A^{\frac{1}{2}}(A+\lam)^{-\frac{1}{2}}}\norm[1]{(A+\lam)^{\frac{1}{2}}h} \leq \sqrt \lam F(\lambda).
  \]
  For $s=0$, we have
  \[
    \norm{h} \leq 
    \lam^{-\frac{1}{2}} \norm[1]{ \lam^{\frac{1}{2}}h}
\leq \lam^{-\frac{1}{2}}
\norm[1]{(A+\lam)^{\frac{1}{2}}h}
\leq F(\lam).
\]
Finally, by interpolation between the two last inequalities, for $s\in [0,\frac{1}{2}]$:
\[
\norm{{A}^{s}h}
\leq \norm{h}^{1-2s} \norm[1]{{A}^{\frac{1}{2}}h}^{2s}
\leq  \lam^{s} F(\lam).
\]
\end{proof}

\medskip

Now we can formulate the parameter choice in this abstracted framework; it is based on a modified Lepski\u\i{}'s method (or "balancing principle").
\begin{defi}[Parameter choice, abstract version]
\label{def:lepest_abs}
We consider the notation from Assumption~\ref{ass:error}. For the grid~$\Lambda$
from~(\ref{eq:lamdam}) we set
\begin{align}
  \label{eq:setlepski_abs}
    \M(\Lambda) := \bigg\{\lam  \in \Lambda \; :\;  %
   \norm{ (A + \lam')^{1/2}(\fl - f^{\lam'})   }_{\Hr}
         \leq   4 C \sqrt{\lam'} \es(\lam'), \; 
          \forall \lam' \in \Lambda, \text{ s.t. } \lam' \leq \lam\;    \bigg\}.
\end{align} 
The balancing parameter is given as
\begin{equation}
\label{hatlam} 
\hat \lam :=\max \; \M(\Lam)  \;;
\end{equation}
obviously this quantity is always well-defined since $\lam_{\min} \in \M(\Lam)$.
\end{defi}

  This parameter choice has the remarkable property of simultaneous adaptivity
  in several norms $\norm{A^{s}\cdot}_{\Hr}$ for~$0 \leq s \leq 1/2$.
  This is formulated in the following main result.

Let~$\lamstar$ be given as
\begin{equation}
  \label{eq:lamstar-new2}
  \lamstar := \max \paren{\set{\lam\in \Lam,\quad \A(\lam) \leq \es(\lam) } \cup \set{\lam_{\min}}}.
\end{equation}
The value~$\lamstar$ is unknown to us, since the function~$\A$
is. 

\begin{theo}
  \label{theo:abstractoracle}
Suppose Assumption~\ref{ass:error} holds.
For the parameters~$\hat\lam$ and~$\lamstar$
from~\eqref{hatlam} and~\eqref{eq:lamstar-new2}, respectively, the
following holds true:
\begin{enumerate}
\item \label{it:1}
  The parameter~$\hat\lam$ satisfies~$\hat\lam \geq
  \lamstar$, and \pmathe{it obeys}
  \begin{equation}
    \label{eq:hatlam-bound3}
    \norm[1]{(A+\hat\lam)^{\frac{1}{2}}(\fd - f^{\hat\lam})} \leq 6 C \sqrt{\lamstar} \wt{\es}(\lamstar),
  \end{equation}
    where $\wt{\es}(\lamstar) := \max(\es(\lamstar),\A(\lamstar))$.
\item  \label{it:2}
  If it holds that $\es(\lam_k) \leq C_{\es}\es(\lam_{k-1})$ for $k=1,\ldots,m$ for
  some constant~$C_{\es}>1$, then for any $s\in[0,\frac{1}{2}]$:
  \begin{equation}
    \label{eq:oracle-bound2}
    \lamstar^s  \wt{\es}(\lamstar)
    \leq 
    C_{\es} \min_{\lam\in [\lam_{\min},\lam_0]}\set[2]{
       \lam^s \paren{\A(\lam) + \es(\lam)}}.
  \end{equation}
\end{enumerate}
(Note that $\wt{\es}(\lamstar) \neq \es(\lamstar)$ can only possibly happen in the
``edge'' case $\lamstar = \lambda_{\min}$.)
\end{theo}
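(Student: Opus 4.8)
The plan is to run the classical Lepski\u\i{} (balancing) argument in two movements and then settle the second assertion by a purely deterministic comparison. First I would show that the unknown oracle $\lamstar$ is itself \emph{admissible}, i.e. $\lamstar\in\M(\Lam)$; since $\hat\lam=\max\M(\Lam)$ this gives $\hat\lam\ge\lamstar$ at once. Second, I would bound the error of the selected estimator by inserting $\fls$ as an intermediate point and combining the source bound of Assumption~\ref{ass:error} at $\lamstar$ with the balancing inequality defining $\M(\Lam)$, evaluated at $\lam'=\lamstar$.

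For admissibility, fix $\lam'\in\Lam$ with $\lam'\le\lamstar$ and split $\fls-f^{\lam'}=(\fls-\frho)+(\frho-f^{\lam'})$. The second summand is controlled directly by~\eqref{eq:assmp1}: since $\A$ is non-decreasing, $\es$ non-increasing, and $\A(\lamstar)\le\es(\lamstar)$ by definition of $\lamstar$, one gets $\A(\lam')\le\A(\lamstar)\le\es(\lamstar)\le\es(\lam')$, hence $\norm{(A+\lam')^{1/2}(\frho-f^{\lam'})}\le 2C\sqrt{\lam'}\,\es(\lam')$. For the first summand I pass to the larger parameter via $\norm{(A+\lam')^{1/2}g}\le\norm{(A+\lamstar)^{1/2}g}$ (valid as $\lam'\le\lamstar$), apply~\eqref{eq:assmp1} at $\lamstar$, and use the monotonicity of $\lam\mapsto\sqrt{\lam}\,\es(\lam)$ to convert $\sqrt{\lamstar}\,\es(\lamstar)\le\sqrt{\lam'}\,\es(\lam')$; this again yields $2C\sqrt{\lam'}\,\es(\lam')$. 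Adding the two contributions gives exactly the threshold $4C\sqrt{\lam'}\,\es(\lam')$, so $\lamstar\in\M(\Lam)$ (the residual case $\lamstar=\lam_{\min}$ is vacuous, as no smaller grid point exists).

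For the error bound I write $\frho-f^{\hat\lam}=(\frho-\fls)+(\fls-f^{\hat\lam})$. The first term is bounded by~\eqref{eq:assmp1} at $\lamstar$ together with $\A(\lamstar)+\es(\lamstar)\le 2\wt{\es}(\lamstar)$, giving $2C\sqrt{\lamstar}\,\wt{\es}(\lamstar)$; the second, because $\hat\lam\in\M(\Lam)$ and $\lamstar\le\hat\lam$, is precisely~\eqref{eq:setlepski_abs} at $\lam'=\lamstar$, giving $4C\sqrt{\lamstar}\,\es(\lamstar)\le 4C\sqrt{\lamstar}\,\wt{\es}(\lamstar)$; the sum is $6C\sqrt{\lamstar}\,\wt{\es}(\lamstar)$. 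The delicate point — and what I expect to be the main obstacle — is the \emph{varying} norm: both ingredients deliver their control in the norm $\norm{(A+\lamstar)^{1/2}\cdot}$, so the decomposition produces $\norm{(A+\lamstar)^{1/2}(\frho-f^{\hat\lam})}\le 6C\sqrt{\lamstar}\,\wt{\es}(\lamstar)$, which is exactly the shape consumed by Proposition~\ref{prop:interptool} (with $\lam=\lamstar$, $F=6C\,\wt{\es}(\lamstar)$) to produce simultaneous $\norm{A^s\cdot}$ bounds for $s\in[0,1/2]$. Matching this with the stated left-hand norm $\norm{(A+\hat\lam)^{1/2}\cdot}$ is the step I would scrutinize most carefully, since it requires comparing $(A+\hat\lam)^{1/2}$ and $(A+\lamstar)^{1/2}$ through $(A+\hat\lam)^{1/2}(A+\lamstar)^{-1/2}$ and hence controlling how far $\hat\lam$ may exceed $\lamstar$.

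For the second assertion the argument is deterministic. Put $G(\lam):=\lam^s(\A(\lam)+\es(\lam))$ and record that $\lam\mapsto\lam^s\A(\lam)$ is non-decreasing while $\lam\mapsto\lam^s\es(\lam)=\lam^{s-1/2}\paren{\sqrt{\lam}\,\es(\lam)}$ is non-increasing for $s\le 1/2$ (a product of two non-increasing nonnegative factors). Writing $\lamstar=\lam_k$: for $\lam\le\lamstar$ one has $G(\lam)\ge\lam^s\es(\lam)\ge\lamstar^s\es(\lamstar)$; for $\lam>\lamstar$ one compares against the next grid point $\lam_{k-1}>\lamstar$, at which $\A(\lam_{k-1})>\es(\lam_{k-1})$ by maximality of $\lamstar$, treating $\lam\le\lam_{k-1}$ via the $\es$-monotonicity and $\lam\ge\lam_{k-1}$ via the $\A$-monotonicity to reach $G(\lam)\ge\lam_{k-1}^s\es(\lam_{k-1})$ in both cases. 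The geometric grid hypothesis $\es(\lam_k)\le C_{\es}\,\es(\lam_{k-1})$ then turns this into $C_{\es}^{-1}\lamstar^s\es(\lamstar)$. Minimising over $\lam\in[\lam_{\min},\lam_0]$ and using $\wt{\es}(\lamstar)=\es(\lamstar)$ off the edge case yields the claim; the edge case $\lamstar=\lam_{\min}$, $\wt{\es}(\lamstar)=\A(\lamstar)$, follows with constant $1$ straight from the monotonicity of $\lam\mapsto\lam^s\A(\lam)$.
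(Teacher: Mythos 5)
Your proof is correct and follows essentially the same route as the paper's own: admissibility of $\lamstar$ via a triangle inequality using Assumption~\ref{ass:error} at both parameters together with the monotonicity properties (giving $\hat\lam \geq \lamstar$), then a decomposition of $\fd - f^{\hat\lam}$ through $\fls$ combined with the balancing inequality at $\lam' = \lamstar$, and for part (2) the same deterministic case analysis around the next-larger grid point.

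Two remarks. First, the step you singled out for scrutiny is not a defect of your argument but of the statement itself: the paper's proof establishes exactly what you establish, namely $\norm{(A+\lamstar)^{1/2}(\fd - f^{\hat\lam})} \leq 6C\sqrt{\lamstar}\,\wt{\es}(\lamstar)$, and never performs a conversion to the $(A+\hat\lam)^{1/2}$-norm appearing in~\eqref{eq:hatlam-bound3}; such a conversion would indeed cost the uncontrolled factor $\norm{(A+\hat\lam)^{1/2}(A+\lamstar)^{-1/2}} \leq \sqrt{\hat\lam/\lamstar}$. It is also the $\lamstar$-version that is invoked in the proof of Theorem~\ref{theo:mainlearning} (there with $A = B_\x$), so the $\hat\lam$ in~\eqref{eq:hatlam-bound3} should be read as $\lamstar$. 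Second, a small imprecision: you invoke ``$\es$ non-increasing'', which is not among the hypotheses of Assumption~\ref{ass:error}; only $\lam \mapsto \sqrt{\lam}\,\es(\lam)$ is assumed non-increasing. The inequality you need, $\es(\lamstar) \leq \es(\lam')$ for $\lam' \leq \lamstar$, nevertheless follows from it, since $\es(\lamstar) = \sqrt{\lamstar}\,\es(\lamstar)/\sqrt{\lamstar} \leq \sqrt{\lam'}\,\es(\lam')/\sqrt{\lamstar} \leq \es(\lam')$.
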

Observe that, via Proposition~\ref{prop:interptool}, the estimate~\eqref{eq:hatlam-bound3} leads
to a control of $\norm[1]{\fd - f^{\hat\lam}}_\Hr$ as well as (in the case $A$ could be formally taken equal to the operator $B$ defined in Section~\ref{sec:defops}) $\norm[1]{B^{\frac{1}{2}}(\fd - f^{\hat\lam})}_\Hr=\norm[1]{\fd - f^{\hat\lam}}_{L^2(\rho)}$, which thanks to the second estimate~\eqref{eq:oracle-bound2}, can be interpreted as an oracle-type inequality for both of
these error measures.
\begin{proof}[Proof of Theorem~\ref{theo:abstractoracle}]
  Assume first that $\lamstar>\lam_{\min}$, so that $\A(\lamstar) \leq \es(\lamstar)$ holds. Then for any~$\lam \in \Lambda$ with~$\lam  \leq \lamstar$ we find that
  \begin{multline*}
    \norm[1]{(B+\lam)^{\frac{1}{2}}( \fl - \fls)}\\
    \begin{aligned}
    & \leq \norm[1]{(B+\lam)^{\frac{1}{2}}(\fl - \fd)} + \norm[1]{(B+\lam)^{\frac{1}{2}}(\fls - \fd)} \\
    & \leq \norm[1]{(B+\lam)^{\frac{1}{2}}(\fl - \fd)} +
    \norm[1]{(B+\lam)^{\frac{1}{2}}(B+\lamstar)^{-\frac{1}{2}}}\norm[1]{(B+\lamstar)^{\frac{1}{2}}(\fls - \fd)} \\
& \leq C \sqrt{\lam}\paren{\A(\lam) + \es(\lam)} +
   C \sqrt{\lamstar} \paren{\A(\lamstar) + \es(\lamstar)}\\
&\leq 2 C\sqrt{\lam}\es(\lam) + 2 C \sqrt{\lamstar}\es(\lamstar)\\
& \leq 4 C\sqrt{\lam}\es(\lam),
\end{aligned}
\end{multline*}
so that $\lamstar$ belongs to the set appearing in \eqref{eq:setlepski_abs}, and thus~$\hat\lam \geq \lamstar$. Finally, in the initially excluded case $\lamstar=\lam_{\min}$, obviously this conclusion still holds.
This in turn yields
\begin{align*}
  \norm[1]{(B+\lamstar)^{\frac{1}{2}}(\fd - f^{\hat\lam})}
  & \leq   \norm[1]{(B+\lamstar)^{\frac{1}{2}}(\fd - f^{\lamstar})} +
    \norm[1]{(B+\lamstar)^{\frac{1}{2}}(f^{\lamstar}- f^{\hat\lam})}
  \\
  & \leq 2 C \sqrt{\lamstar} \wt{\es}(\lamstar) + 4  C\sqrt{\lamstar} \es(\lamstar)\\
  & \leq
  6  C \sqrt{\lamstar} \wt{\es}(\lamstar),
\end{align*}
using \eqref{eq:assmp1} and \eqref{eq:setlepski_abs} for the first and second term,
respectively, establishing~\eqref{eq:hatlam-bound3}.

For the proof of the oracle property~\eqref{eq:oracle-bound2},
define $\lamstar^+:= \min\set{\lam' \in \Lambda: \lam' > \lamstar}$
(which is well-defined, provided $\lamstar < \lam_0 = \max \Lambda$;
exclude for now the case $\lamstar = \lam_0$).
Observe that $\lamstar^+ > \lamstar$ must correspond to two consecutive indices in
$\Lambda$, so that by assumption we have 
$\es(\lamstar) \leq C_\es \es(\lamstar^+)$; also by definition of $\lamstar$, we
have $\A(\lamstar^+) > \es(\lamstar^+)$.

We consider two cases for $\lam \in [\lam_{\min},\lam_0]$:
\begin{itemize}
\item $\lam \leq \lamstar^+:$
In this case, noticing that
$\lambda \mapsto \lambda^s\es(\lam)$ is non-increasing for $s\in[0,\frac{1}{2}]$,
\begin{align*}
     \lam^s \paren{\A(\lam) + \es(\lam)} & \geq  \lam^s \es(\lam) \geq  (\lamstar^+)^s \es(\lamstar^+) \geq C_\es^{-1} \lamstar^s \es(\lamstar).
\end{align*}
\item $\lam > \lamstar^+:$
  We can then bound
  \begin{align*}
    \lam^s \paren{\A(\lam) + \es(\lam)}  \geq  \lam^s \A(\lam)
   \geq    \lamstar^s \A(\lamstar^+) 
   &\geq   \lamstar^s \es(\lamstar^+)
   \geq C_{\es}^{-1}   \lamstar^s \es(\lamstar).
  \end{align*}
\end{itemize}
This establishes~\eqref{eq:oracle-bound2} if $\wt{\es}(\lamstar) = \es(\lamstar)$;
  otherwise, it must be the case that $\wt{\es}(\lamstar) = \A(\lamstar)$ and $\lamstar = \lam_{\min}$, in which case~\eqref{eq:oracle-bound2} obviously holds since $\A$ is non-decreasing.

Finally coming back to the edge situation $\lamstar = \lam_0 = \max \Lambda$ first put aside above,
we then only have to consider $\lambda \leq \lamstar$ and a straightforward
modification of the argument in the first case above yields
$\lam^s \paren{\A(\lam) + \es(\lam)} \geq \lamstar^s \es(\lamstar)$.
\end{proof}


\section{Bounds in the supervised learning setting}
\label{sec:ass3}

\pmathe{In order to validate the usage of the generalized Lepski\u\i{}
principle from Section~\ref{sec:basic-results} we now need to make an
assumption of the distribution of the noise~$\varepsilon$ in the
model~(\ref{eq:base}),}
and we make the following Bernstein-type moment inequality assumption:
\begin{assumption}
  \label{ass:bernstein}
  The data generating distribution is such that there exists positive constants $\sigma, M$ with
  \begin{equation}
    \E_{(X,Y)\sim\rho}{(Y-f_\rho(X))^k} \leq \frac {k!}{2} \sigma^2 M^{k-2},
  \end{equation}
  for all integers $k\geq 2$.
\end{assumption}
Note again that we constantly assume that the constants $\sigma,M$ are
{\em known}, \pmathe{or at least valid upper bounds}. \gillesb{An upper confidence bound holding with
large probability would also be suitable, but we don't touch the subject of estimating
noise variance in this paper.}

In the abstract version of the (modified) Lepski\u\i's method presented in the previous section,
taking formally $A:=B$ would lead to a parameter choice depending on unobserved population quantities, which would not be an {\em a posteriori} choice.
In the present section, we turn to establishing an error bound for an estimator which only
depends on observable quantities (and of quantities which are assumed to be known such as $M$ and
$\sigma$). For this, the gist of the approach is to establish that 
Assumption~\ref{ass:error} holds on an event of high probability,
for entirely empirical quantities;
then, provided this event is satisfied, we will be able to apply
Theorem~\ref{theo:abstractoracle}.

Let us specify some definitions first. For the sake of simplicity, we will consider a geometrically regular grid of factor $q>1$, i.e. \gillesb{the set of candidate regularization parameters will be
  \[
    \Lambda^{(q)} := \set{\lambda_i = \kappa^2 q^{-i}, i \in \mathbb{N} }.
    \]
  Note that only finite subsets of the above grid, with a minimum element either deterministic or
  data-dependent, will be considered in the sequel.}
In the rest of this section we assume $q$ to be fixed.
Since our estimates will be based on exponential deviation probabilities, we will denote
\[
  \letan := 2 \log \paren{ \frac{8 \log n}{\eta \log q}},
\]
where $\eta\in(0,1)$ will denote the (small) probability of the favorable event not being satisfied, and which is assumed to be fixed a priori. For practical purposes, one can for instance think of
$\eta$ as being some negative power of the number of training examples $n$, so that $\letan$ is
a logarithmic factor in $n$; alternatively, if a fixed small probability of failure $\eta$ is deemed
acceptable for any $n$, the factor $\letan$ is only $\mathcal{O}(\log \log n)$.

Let
\begin{equation}
   \label{eq:esx-function}
   \es_\x(n,\lam) := 
     \frac{\sigma\sqrt{2(\NN_\x(\lam) \vee 1)} + M/5}{\sqrt{\lam n}},\quad \lambda
  >0,
\end{equation}
be an empirical version of~$\es(\lambda)$ from~(\ref{eq:es-function}) (the numerical constants are for technical
convenience but don't have any special meaning here). We will also define a data-dependent (and therefore random) grid of parameters. The motivation for this is that the main error estimate involves
$\sqrt{\lambda}\es_\x(\lam)$ as a term, so one might as well restrict the search to the region where this term is at most of order 1. We thus define
\pmathe{\begin{equation}
  \label{eq:gridz}
  \Lambda_{\x} := \set{ \lambda\in\Lam^{(q)} \ \text{ s.t. }\;
    \lambda \geq 100\kappa^2 \letan^2/n \text{ and } \lambda \geq  3 \kappa^2
    (\NN_\x(\lambda)\vee 1)/n  }.
\end{equation}
}
In principle, it could happen that $\Lambda_\x$ is empty, in which
case the procedure below will be undefined (formally, the parameter~$\hat \lam_\z$ can be taken equal to $\kappa^2$ in that case). This does
not contradict the performance bounds to come, since it will be clear from their proof
that at least for $n$ big enough then with high
probability the grid~$\Lam_\x$ will be  non empty.

We can now define the purely data-driven parameter choice.
\begin{defi}[Data-driven parameter choice]
  Let $(\fzl)_{\lambda \in (0,\kappa^2]}$ be a family of regularized estimates
    as defined in~\eqref{eq:fzl} using a regularization function of qualification $\psi$,
    and $\ol{\gamma}_\psi$ be defined as in Proposition~\ref{prop:interpreg}.
\label{def:lepest}
For the grid~$\Lambda_\x$ from~\eqref{eq:gridz}, we set
\begin{multline}
  \label{eq:setlepski}
    \M_\z(\Lambda_\x) := \bigg\{\lam  \in \Lambda_\x \; :\;  %
   \norm{ (B_{\x} + \lam')^{1/2}(\fzl - f^{\lam'}_\z)   }_{\Hr}
         \leq   64 \ol{\gamma}_\psi \letan \sqrt{\lam'} \es_\x(\lam'), \;\\ 
          \forall \lam' \in \Lambda_\x, \text{ s.t. } \lam' \leq \lam\;    \bigg\}.
\end{multline} 
The data-driven balancing parameter is given as
\begin{equation}
\label{hatlamz} 
\hat \lam_\z :=\max \M_\z(\Lambda_\x). 
\end{equation}     
\end{defi}

\begin{rem}
  The parameter $\hat \lam_\z$ is indeed an {\em a posteriori} choice since it only depends on
  quantities that are assumed known to the user and empirical quantities. Furthermore,
  the computation itself is relatively easy since the norm appearing in~\eqref{eq:setlepski}
  can be rewritten as
  \[
    \norm{ (B_{\x} + \lam')^{1/2}(f_{\z}^{\lam} - f_{\z}^{\lam'})}_{\Hr}
    = \paren[3]{  
      \frac{1}{n} \sum_{i=1}^n (f_{\z}^{\lam} - f_{\z}^{\lam'})^2(X_i)
       + \lambda' \norm{f_{\z}^{\lam} - f_{\z}^{\lam'}}^2_{\Hr} }^{\frac{1}{2}}.
   \]
   In practice these quantities can directly be computed, since the estimators $f_\z^\lam$ are
   represented as a kernel expansion $\fzl = \sum_{i=1}^n
   \alpha^\lam_i K_{x_i}$ (see~\eqref{eq:shift}), so that 
   $\fzl(x) = \sum_{i=1}^n \alpha^\lam_i K(x_i,x)$, and then
   \[ \norm{f_{\z}^{\lam} - f_{\z}^{\lam'}}^2_\Hr = \sum_{i,j =1}^n (\al_i^\lam-\al^{\lam'}_i)
     (\al_j^\lam-\al^{\lam'}_j) K(x_i,x_j).\]
\end{rem}

To analyze the behavior of the proposed algorithm, we assume a general
form source condition for the target function,
\begin{equation}
  \label{eq:source}
  \fo = \phi(B) h, \qquad \norm{h}_\Hr \leq 1,
\end{equation}
where $\phi: [0,\kappa^2] \rightarrow \R_+$ \pmathe{is an index
  function as introduced in Definition~\ref{def:index}, which can be decomposed as}~$\phi(t) = \varphi_1(t)
\varphi_2(t)$, and $\varphi_1$, $\varphi_2$ are nondecreasing functions 
$[0,\kappa^2] \rightarrow \R_+$ such that $\varphi_1$ is $\lipc$-Lipschitz and
$\varphi_2$ is sublinear, by which we mean that $\varphi_2(t)/t$ is nonincreasing
for $t>0$.
We observe that in that setting it would be redundant to
consider the more general inequality~$\norm{h}_\Hr \leq R$,
since this can always be achieved by implicit rescaling~$\phi \to R\phi$.

In order to guarantee a control of the \pmathe{function~$\A(\lam)$ from Assumption~\ref{ass:error}, playing he role of approximation term} in our main probabilistic estimate), we will finally make the assumption that the regularization method has qualification $t \mapsto \sqrt{t}\phi(t)$
(see Definition~\ref{def:qualif}). We recall that qualification with a covering function is sufficient, so that this assumption has to be interpreted as that of a {\em minimal qualification}. Equivalently, if the qualification of the method is fixed and equal to $\psi$ (assumed to
cover $t\mapsto \sqrt{t}$),
the results to come hold for all source conditions~\eqref{eq:source} covered by $t\mapsto \psi(t) /\sqrt{t}$.

The next theorem is our main result:
\begin{theo}
  \label{theo:mainlearning}
  Assume the observed data $\z=(\x, \y)=((x_1, y_1),...,(x_n,y_n))$ is drawn independently and identically distributed according to $\rho$, satisfying Assumption~\ref{ass:bernstein},
  wherein parameters $\sigma$ and $M$ are assumed to be known.
  
  Let $\fzl$, $\lam>0$ be defined as in Section~\ref{sec:regularization} for a regularization
  family of qualification $\psi$, where $\psi$ is assumed to cover
  $t\mapsto \sqrt{t}$. Let $\ol{\gamma}_\psi$ be defined as in
  Proposition~\ref{prop:interpreg}. Let $q>1$, $\eta\in(0,1)$ be fixed
  and $\hat\lam_\z$ be the
  data-dependent, a posteriori parameter choice given by~\eqref{hatlamz}.


  If the regression function $f_\rho$ satisfies a source condition of the form given by~\eqref{eq:source} with index function $\phi$ covered by $t\mapsto \psi(t)/\sqrt{t}$,
  then with probability at least $1-\eta$ it holds for all $s\in[0,\frac{1}{2}]$:
  \begin{align}
    \label{eq:trueoracle}
    \norm[1]{B^s(\fd - f^{\hat\lam_\z})} \leq    c q \ol{\gamma}_\psi \letan^3 \min_{\lam\in[ \lam_{\min}, \kappa^2]}\paren{
    \lam^s \paren[2]{ \phi(\lambda) + \es(n,\lam) + \frac{\lipc \varphi_2(\kappa^2)\kappa^2}{\sqrt{n}} }},
  \end{align}
  where  $c$ is a numerical constant ($c=384$ works), and
  \begin{equation}
    \label{eq:lambdamin}
    \lambda_{\min} = q \min\set{ \lambda: \lambda \geq 100\kappa^2 \letan^2/n \text{ and }
      \lambda \geq  6 \kappa^2 (\NN(\lambda)\vee 1)/n},
    \end{equation}
    where we assume $n$ large enough so that $n\geq \max(100
    \letan^2,6)$, ensuring the above minimum to be well-defined.
 \end{theo}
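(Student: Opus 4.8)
The overall plan is to verify that Assumption~\ref{ass:error} holds, with the empirical operator $A:=B_\x$ and the empirical estimation function $\es_\x$ (up to the constant $16\ol{\gamma}_\psi\letan$ matching the threshold $64\ol{\gamma}_\psi\letan$ appearing in~\eqref{eq:setlepski}), on a single ``favorable'' event of probability at least $1-\eta$, and then to invoke Theorem~\ref{theo:abstractoracle} verbatim for the data-driven rule of Definition~\ref{def:lepest}, which is exactly the abstract rule under these identifications. The starting point is the exact bias/noise splitting $\fd-\fzl = r_\lam(B_\x)\fd - g_\lam(B_\x)T_\x^*\bm{\varepsilon}$, obtained from $T_\x^*\y = B_\x\fd + T_\x^*\bm{\varepsilon}$ together with $r_\lam(t)=1-tg_\lam(t)$; the two summands are then estimated separately in the $\norm{(B_\x+\lam)^{1/2}\,\cdot\,}$-norm, uniformly over the grid.

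For the noise term I would factor $(B_\x+\lam)^{1/2}g_\lam(B_\x)T_\x^*\bm{\varepsilon} = \left[(B_\x+\lam)^{1/2}g_\lam(B_\x)(B_\x+\lam)^{1/2}\right](B_\x+\lam)^{-1/2}T_\x^*\bm{\varepsilon}$, where the bracketed operator has norm $O(\ol{\gamma}_\psi)$ since its symbol $(t+\lam)|g_\lam(t)|\le(\gamma_0+1)+\gamma_{-1}$ by~\eqref{eq:supg} and~\eqref{eq:supgt}. It then remains to bound $\norm{(B_\x+\lam)^{-1/2}T_\x^*\bm{\varepsilon}}$, which I would do by passing through the population weight $\norm{(B_\x+\lam)^{-1/2}(B+\lam)^{1/2}}\cdot\norm{(B+\lam)^{-1/2}T_\x^*\bm{\varepsilon}}$ and controlling each factor on the favorable event: the first is $O(1)$ by an operator-Bernstein concentration of $B_\x$ around $B$, valid precisely because the grid~\eqref{eq:gridz} enforces $\lam\ge 3\kappa^2(\NN_\x(\lam)\vee1)/n$; the second by a Hilbert-space Bernstein inequality for the mean-zero sum $n^{-1}\sum_i\varepsilon_i(B+\lam)^{-1/2}K_{x_i}$, whose conditional variance is $\sigma^2\NN(\lam)/n$ (using $\E_X\norm{(B+\lam)^{-1/2}K_X}^2=\NN(\lam)$) and whose increments are bounded through $\norm{K_x}\le\kappa$ and Assumption~\ref{ass:bernstein}. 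This yields the $\sigma\sqrt{\NN(\lam)}$ and $M$ contributions of $\sqrt\lam\,\es(\lam)$; crucially the constraint $\lam\ge100\kappa^2\letan^2/n$ in~\eqref{eq:gridz} is what converts the crude $M\kappa/\sqrt{\lam n}$ increment bound into the clean $M/\sqrt n$ scaling, while the extra factor $2$ inside $\es_\x$ absorbs the passage $\NN\to\NN_\x$. The factor $\letan$ is the price of a union bound over the $O(\log_q n)$ active grid points at total failure probability $\eta$, which is exactly what the definition of $\letan$ is calibrated for.

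For the bias term I would insert the source condition~\eqref{eq:source} and split $r_\lam(B_\x)\fd = r_\lam(B_\x)\phi(B_\x)h + r_\lam(B_\x)(\phi(B)-\phi(B_\x))h$. The key elementary fact, valid because $\psi$ covers $t\mapsto\sqrt t$, is that $(t+\lam)^{1/2}|r_\lam(t)|\le(\sqrt t+\sqrt\lam)|r_\lam(t)|\le 2\ol{\gamma}_\psi\sqrt\lam$ by Proposition~\ref{prop:interpreg}, so $\norm{(B_\x+\lam)^{1/2}r_\lam(B_\x)}=O(\ol{\gamma}_\psi\sqrt\lam)$. For the first summand, combining this with the assumed qualification $t\mapsto\sqrt t\,\phi(t)$ bounds the full symbol $(t+\lam)^{1/2}|r_\lam(t)|\phi(t)$ by $2\ol{\gamma}_\psi\sqrt\lam\,\phi(\lam)$, supplying the $\phi(\lam)$ contribution to $\A(\lam)$. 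For the second, perturbative summand the factor $\sqrt\lam$ is already available from $\norm{(B_\x+\lam)^{1/2}r_\lam(B_\x)}$, and it remains to bound $\norm{(\phi(B)-\phi(B_\x))h}$ by the $\lam$-independent quantity $\lipc\varphi_2(\kappa^2)\kappa^2/\sqrt n$; this is where the device replacing operator monotonicity enters, via the decomposition $\phi=\varphi_1\varphi_2$, the $\lipc$-Lipschitzness of $\varphi_1$, the sublinearity of $\varphi_2$ (so that $\norm{\varphi_2(B)h}\le\varphi_2(\kappa^2)$), and the operator concentration $\norm{B-B_\x}=O(\kappa^2\letan/\sqrt n)$. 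I expect this perturbation estimate to be the main obstacle: transferring the regularization from the population $B$ in the source condition to the empirical $B_\x$ without assuming $\phi$ operator-monotone forces one to control $\phi(B)-\phi(B_\x)$ directly, and since scalar Lipschitzness of $\varphi_1$ does not by itself yield an operator-norm bound on $\varphi_1(B)-\varphi_1(B_\x)$, the Lipschitz/sublinear splitting must be exploited carefully to keep the resulting remainder correctly scaled and $\lam$-independent.

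Combining the two bounds verifies Assumption~\ref{ass:error} on the favorable event with $A=B_\x$, $C=16\ol{\gamma}_\psi\letan$, $\A(\lam)=\phi(\lam)+\lipc\varphi_2(\kappa^2)\kappa^2/\sqrt n$ and estimation function $\es_\x$, so Theorem~\ref{theo:abstractoracle} applies to $\hat\lam_\z$. Part~\ref{it:1}, fed into Proposition~\ref{prop:interptool} with $A=B_\x$, $h=\fd-f^{\hat\lam_\z}$, $\lam=\hat\lam_\z$, and using $\hat\lam_\z\ge\lamstar$ with $s\le\tfrac12$, gives $\norm{B_\x^s(\fd-f^{\hat\lam_\z})}\le 6C\,\lamstar^s\wt{\es}(\lamstar)$; part~\ref{it:2}, whose hypothesis holds with $C_\es=q$ because the geometric grid and~\eqref{eq:nqlambda} yield $\es_\x(\lam/q)\le q\,\es_\x(\lam)$, turns this into the oracle minimum. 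The last step is the empirical-to-population transfer on the favorable event: replacing $\norm{B_\x^s\,\cdot\,}$ by $\norm{B^s\,\cdot\,}$ through $\norm{B^{1/2}(B_\x+\lam)^{-1/2}}=O(1)$ and interpolation, replacing $\es_\x$ by $\es$ through $\NN_\x\asymp\NN$, and matching the data-dependent grid $\Lambda_\x$ against the deterministic grid with minimal element $\lambda_{\min}$ from~\eqref{eq:lambdamin}, while relating the discrete minimum to the continuous one in~\eqref{eq:trueoracle} up to the factor $q$. Collecting all multiplicative constants — the $6C$ from Theorem~\ref{theo:abstractoracle}, the $64\ol{\gamma}_\psi\letan$ threshold, $C_\es=q$, and the $O(\letan)$ factors incurred in the two transfer steps — then produces the stated prefactor $cq\ol{\gamma}_\psi\letan^3$ with $c=384$.
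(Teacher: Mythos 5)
Your overall architecture coincides with the paper's: verify Assumption~\ref{ass:error} on a high-probability event (union bound over the grid) with $A:=B_\x$, estimation function $\es_\x$, and $\A(\lam)=\phi(\lam)+\lipc\varphi_2(\kappa^2)\kappa^2/\sqrt{n}$; invoke Theorem~\ref{theo:abstractoracle}; then transfer back to population quantities via $\norm{(B+\lam)^{1/2}(B_\x+\lam)^{-1/2}}$, $\NN_\x\asymp\NN$, $\min\Lambda_\x\leq\lam_{\min}$, and $C_\es=q$. Your noise-term treatment and final bookkeeping also essentially match the paper. However, there is a genuine gap in your bias term. You split
\begin{equation*}
r_\lam(B_\x)\fd \;=\; r_\lam(B_\x)\phi(B_\x)h \;+\; r_\lam(B_\x)\paren{\phi(B)-\phi(B_\x)}h
\end{equation*}
and propose to bound $\norm{\paren{\phi(B)-\phi(B_\x)}h}$ by the $\lam$-independent quantity $\lipc\varphi_2(\kappa^2)\kappa^2/\sqrt{n}$ (times $\letan$). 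This additive bound is false in general, because the sublinear factor $\varphi_2$ need not be Lipschitz. In the canonical H\"older case $\phi(t)=t^r$ with $r<1$ (the paper's main example), the admissible decomposition is $\varphi_1\equiv\mathrm{const}$ (so $\lipc=0$) and $\varphi_2(t)=t^r$; your claimed bound then degenerates to zero, while $\norm{B^r-B_\x^r}$ is certainly not zero --- in operator norm it can only be controlled at order $\norm{B-B_\x}^r\sim n^{-r/2}$, not $n^{-1/2}$. Since the source condition only gives $\norm{h}_\Hr\leq 1$, it is the operator (or HS) norm of $\phi(B)-\phi(B_\x)$ that matters, so differencing the whole $\phi$ cannot produce a remainder that is simultaneously $O(n^{-1/2})$ and $\lam$-independent; your verification of Assumption~\ref{ass:error} therefore does not go through precisely for the non-Lipschitz index functions of main interest.

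The paper's proof is structured to avoid exactly this. Only the Lipschitz factor $\varphi_1$ is differenced additively, and this is done in HS norm, where scalar Lipschitzness does transfer ($\norm{\varphi_1(B)-\varphi_1(B_\x)}_{\hs}\leq\lipc\norm{B-B_\x}_{\hs}$); the sublinear factor $\varphi_2$ is never differenced at all but handled \emph{multiplicatively}: in the term $T_3=\norm{(B_\x+\lam)^{1/2}r_\lam(B_\x)\varphi_1(B_\x)\varphi_2(B)}$ one uses that $\varphi_2$ is nondecreasing to replace $\varphi_2(B)$ by $\varphi_2(B+\lam)$, and then compares to $\varphi_2(B_\x+\lam)$ through the ratio bound $\norm{\varphi_2(B+\lam)\varphi_2(B_\x+\lam)^{-1}}\leq(1+\Psi_\x)^2$ of Proposition~\ref{prop:genperturb}, which rests on the decomposition of Lemma~\ref{lem:zhou} and the scalar inequality $\abs{\varphi(\mu)/\varphi(\nu)-1}\leq\abs{\mu/\nu-1}$ for sublinear $\varphi$ (Lemma~\ref{lem:phipert}). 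This multiplicative device is the actual replacement for operator monotonicity and is absent from your outline. Two smaller points: on the favorable event the perturbation factor $\norm{(B+\lam)^{1/2}(B_\x+\lam)^{-1/2}}$ is bounded by $2\letan$, not $O(1)$, and the main estimate~\eqref{eq:assmpt1prob} carries $\letan^2$, so the correct identification is $C=16\ol{\gamma}_\psi\letan^2$ (which, combined with the single transfer factor $2\letan$ and $\es_\x\leq 2\es$, is how $c\,q\,\ol{\gamma}_\psi\letan^3$ with $c=384$ arises); your choice $C=16\ol{\gamma}_\psi\letan$ is not supported by the probabilistic estimate, although admittedly the paper's own threshold in~\eqref{eq:setlepski} displays $\letan$ where consistency with its proof would require $\letan^2$.
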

Observe that this result takes the form of an oracle inequality for the norms
$\norm{B^s.}$ with respect to the population operator $B$ (we recall that this includes the
prediction norm $\norm{.}_{L^2(\rho)}$ for $s=\frac{1}{2}$, see~\eqref{eq:isometry});
and with respect to
a {\em deterministic} parameter range where $\lambda_{\min}$ is determined from the
true effective dimension $\NN$. The next corollary shows that the covered
range is large enough to include a parameter $\lambda_n$ leading to
optimal convergence rates in most situations 
(again, provided that sufficient qualification holds).
\begin{cor}
  \label{cor:rate}
  Assume the same conditions as in Theorem~\ref{theo:mainlearning}.
  Assume $\lim_{\lambda \rightarrow 0} \NN(\lambda) = \infty$ and $\phi(\lambda)>0$ for $\lambda>0$ (true infinite-dimensional setting); assume also that $\phi^2(\lambda)/\NN(\lambda) = o_{\lambda}((\log \log \lambda^{-1})^{-2})$ holds (as $\lambda \rightarrow 0$).
  
  Define $\comfun(\lam) := \lam \phi^2(\lam)/\NN(\lam)$, which is a continuous (strictly) increasing function on $[0,\kappa^2]$ satisfying $\comfun(0)=0$.

Fix $\eta\in(0,1)$.  Then for $n$ larger than a certain $n_0$ (depending on all parameters), with probability at least $1-\eta$ it holds that for any $s\in[0,\frac{1}{2}]$:
\begin{equation}
  \label{eq:therate}
  \norm[1]{B^s(\fd - f^{\hat\lam_\z})} \leq C_\triangle (\log \eta^{-1} + \log \log n)^3 \lam_n^s \phi(\lambda_n), \text{ where }  \lam_n:= \comfun^{-1}\paren{\frac{\sigma^2}{n}},    
\end{equation}
where $C_\triangle$ is a factor depending on all parameters but $\sigma$.
\end{cor}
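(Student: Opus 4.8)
The plan is to obtain~\eqref{eq:therate} from the oracle inequality~\eqref{eq:trueoracle} of Theorem~\ref{theo:mainlearning} by feeding in the single well-chosen competitor $\lam=\lam_n$ in the minimum over $[\lam_{\min},\kappa^2]$. First I would record that $\comfun$ is continuous, strictly increasing and vanishes at $0$ (immediate from $\phi$ being a positive index function and $\NN$ being continuous, decreasing with $\lim_{\lambda\to0}\NN(\lambda)=\infty$), so that $\lam_n=\comfun^{-1}(\sigma^2/n)$ is well defined for $n$ large and $\lam_n\to0$. The defining identity $\lam_n\phi^2(\lam_n)/\NN(\lam_n)=\sigma^2/n$ rearranges into $\sigma\sqrt{\NN(\lam_n)/(\lam_n n)}=\phi(\lam_n)$, which is precisely the bias/variance balance: the leading noise term of $\es(n,\lam_n)$ in~\eqref{eq:es-function} equals $\phi(\lam_n)$.

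Next I would show that at $\lam=\lam_n$ the bracket in~\eqref{eq:trueoracle} is at most a constant times $\phi(\lam_n)$. Since $\NN(\lam_n)\to\infty$, for $n$ large one has $\NN(\lam_n)\vee1=\NN(\lam_n)$, and the residual term $(M/5)/\sqrt{\lam_n n}=(M/(5\sigma))\,\phi(\lam_n)/\sqrt{\NN(\lam_n)}=o(\phi(\lam_n))$, whence $\es(n,\lam_n)\le(1+o(1))\phi(\lam_n)$. For the Lipschitz term, the ratio $\phi(\lam_n)/(1/\sqrt n)=\sigma\sqrt{\NN(\lam_n)/\lam_n}\to\infty$ gives $\lipc\varphi_2(\kappa^2)\kappa^2/\sqrt n=o(\phi(\lam_n))$. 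Hence $\phi(\lam_n)+\es(n,\lam_n)+\lipc\varphi_2(\kappa^2)\kappa^2/\sqrt n\le C\phi(\lam_n)$ for $n$ large, where the $\sigma,M,\lipc$ dependence lives only in the threshold (absorbed into $n_0$) while $C$ stays absolute.

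The crux is admissibility, $\lam_n\in[\lam_{\min},\kappa^2]$, needed for $\lam_n$ to be a legitimate competitor. The upper inequality is trivial as $\lam_n\to0$; for the lower one I must verify the two constraints defining $\lam_{\min}$ in~\eqref{eq:lambdamin}, i.e. $\lam_n\ge q\lambda^{(2)}$ where $\lambda^{(2)}$ solves $\lambda^{(2)}=6\kappa^2\NN(\lambda^{(2)})/n$, and $\lam_n\ge 100q\kappa^2\letan^2/n$. The first is easy: at $\lambda^{(2)}$ one computes $\es(n,\lambda^{(2)})=\Theta(1)$ (the variance saturates), whereas $\es(n,\lam_n)=\Theta(\phi(\lam_n))\to0$; monotonicity of $\es(n,\cdot)$ together with the $\NN$-analogue of~\eqref{eq:nqlambda} then forces $\lam_n\ge q\lambda^{(2)}$. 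The second constraint is the main obstacle. Rewriting it via the defining identity, it is equivalent to $\NN(\lam_n)/\phi^2(\lam_n)\ge 100q\kappa^2\letan^2/\sigma^2$, and this is exactly where the hypothesis $\phi^2(\lambda)/\NN(\lambda)=o((\log\log\lambda^{-1})^{-2})$ is used: it yields $\NN(\lam_n)/\phi^2(\lam_n)=\omega((\log\log\lam_n^{-1})^2)$. What remains is to bridge $\letan$ (which is $\Theta(\log\log n)$ for fixed $\eta,q$) to $\log\log\lam_n^{-1}$; I would do this by proving $\lam_n$ decays at least polynomially, using $\NN(\lambda)\le\kappa^2/\lambda$ to get $\comfun(\lambda)\ge\lambda^2\phi^2(\lambda)/\kappa^2$, hence $\lam_n\phi(\lam_n)\le\kappa\sigma/\sqrt n$, which forces $\log\log\lam_n^{-1}\sim\log\log n$, so that $\letan^2=O((\log\log\lam_n^{-1})^2)$ is dominated by $\NN(\lam_n)/\phi^2(\lam_n)$ for $n$ large.

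Finally, granting admissibility, I would substitute $\lam=\lam_n$ into~\eqref{eq:trueoracle}, replace the bracket by $C\phi(\lam_n)$ from the second step, and bound the prefactor using $\letan=2\log(8\log n/(\eta\log q))\le C'(\log\eta^{-1}+\log\log n)$. Collecting $c$, $q$, $\ol{\gamma}_\psi$, $C$ and $C'$ into one constant $C_\triangle$ (none of which involves $\sigma$) gives $\norm[1]{B^s(\fd-f^{\hat\lam_\z})}\le C_\triangle(\log\eta^{-1}+\log\log n)^3\lam_n^s\phi(\lam_n)$ simultaneously for all $s\in[0,\frac{1}{2}]$, which is~\eqref{eq:therate}; every ``for $n$ large'' requirement is folded into the single threshold $n_0$.
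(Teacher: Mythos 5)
Your overall strategy (feeding the single competitor $\lam_n$ into the oracle inequality~\eqref{eq:trueoracle} and verifying its admissibility) is the same as the paper's, and your treatment of the bracket and of the effective-dimension constraint is sound. But there is a genuine gap precisely in the step you identify as the main obstacle: verifying $\lam_n \geq 100q\kappa^2\letan^2/n$. You apply the hypothesis $\phi^2(\lambda)/\NN(\lambda) = o_\lambda((\log\log\lambda^{-1})^{-2})$ \emph{at the point $\lam_n$}, and then need $\log\log\lam_n^{-1} \geq \delta \log\log n$ (for some $\delta>0$) in order to dominate $\letan^2 = \Theta((\log\log n)^2)$. Your justification --- that $\lam_n\phi(\lam_n)\leq\kappa\sigma/\sqrt n$ forces $\lam_n$ to decay polynomially --- is false: the factor $\phi(\lam_n)$ can absorb all of the decay $n^{-1/2}$. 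Concretely, take $\phi(\lambda)=e^{-1/\lambda}$ (a legitimate index function: it is Lipschitz on $[0,\kappa^2]$, so one may take $\varphi_1=\phi$ and $\varphi_2\equiv 1$) and geometrically decaying eigenvalues of $B$, so that $\NN(\lambda)=\Theta(\log\lambda^{-1})$. All hypotheses of the corollary hold, yet solving $\comfun(\lam_n)=\sigma^2/n$ gives $\lam_n=\Theta(1/\log n)$, whence $\log\log\lam_n^{-1}=\Theta(\log\log\log n)=o(\log\log n)$. In this example your chain $\NN(\lam_n)/\phi^2(\lam_n)\geq c^{-1}(\log\log\lam_n^{-1})^2\geq C\letan^2$ breaks at the second inequality, even though the desired conclusion is true (here $\NN(\lam_n)/\phi^2(\lam_n)=n\lam_n/\sigma^2=\Theta(n/\log n)$, far larger than what your intermediate bound certifies). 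Note also that imposing polynomial decay of $\lam_n$ would amount to reintroducing a minimal-smoothness-type restriction, which the paper explicitly set out to remove.

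The repair --- and this is what the paper's proof does --- is to apply the smallness hypothesis not at the unknown point $\lam_n$ but at the explicit point $\eps_n := 100q\kappa^2\letan^2/n$, and to use monotonicity of $\comfun$: since $\eps_n=\Theta((\log\log n)^2/n)$ decays essentially like $1/n$, one has $\log\log\eps_n^{-1}\sim\log\log n$ automatically, so for any $C>0$ and $n$ large enough,
\begin{equation*}
\comfun(\eps_n)=\frac{\phi^2(\eps_n)}{\NN(\eps_n)}\,\eps_n
< C\,(\log\log\eps_n^{-1})^{-2}\,\eps_n
\leq \frac{\sigma^2}{n}=\comfun(\lam_n),
\end{equation*}
after choosing $C$ appropriately small; monotonicity then yields $\lam_n>\eps_n$ without any knowledge of how fast $\lam_n$ decays. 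With this replacement, the remainder of your argument (the bound on the bracket by $C\phi(\lam_n)$, the effective-dimension constraint handled via~\eqref{eq:nqlambda}, and the final assembly of constants with $\letan\leq C'(\log\eta^{-1}+\log\log n)$) goes through and coincides with the paper's proof.
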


As a standard example, if the spectrum of $B$ satisfies a power
decay with exponent $b < 1$ (which entails $\NN(\lam) = \Theta(\lam^{-b})$ as $\lam \rightarrow 0$), and the target function satisfies a H\"older source condition, i.e. $\phi(t) = c t^r$, then the upper bound in~\eqref{eq:therate} is
of order $\mathcal{O}(n^{-\frac{r+s}{2r + 1 + b}} (\log \log n)^3)$, which
is known to be minimax optimal up to the double logarithmic factor, and
this rate is obtained adaptively without knowing a priori the values of $r$ nor $b$ (but assuming sufficient qualification of the regularization method). 
\gillesb{Without any assumption
  on the spectrum decay, it always holds $\NN(\lam) \leq \lam^{-1} \tr{B} \leq \kappa^2/\lambda$,
  so $\Delta(\lam) = \Omega(\lam^2 \phi^2(\lam))$ and $\Delta^{-1}(u) = \mathcal{O}(\xi^{-1}(u))$,
  where $\xi(\lam) := \lam^2 \phi^2(\lam)$. Under the same H\"older source condition as above,
  we are therefore always guaranteed a convergence rate at least $\mathcal{O}(n^{-\frac{r+s}{2(r + 1)}} (\log \log n)^3)$ under sufficient qualification.}

\begin{proof}[Proof of Corollary~\ref{cor:rate}]
  Define $\lam_{n} := \comfun^{-1}(\sigma^2/n)$, which is well-defined as soon
  as $n$ is big enough to ensure $\sigma^2/n \leq \comfun(\kappa^2)$. Since $\comfun^{-1}$ is a (strictly) increasing continuous function equal to 0 in 0, it holds that $\lam_n = o_n(1)$.
  By the assumption $\lim_{\lambda \rightarrow 0} \NN(\lambda)=\infty$, we conclude $N(\lam_n)\vee 1=
  \NN(\lam_n)$ for $n$ large enough, which we assume from now on.
  Since $\comfun(\lambda_n) = \sigma^2/n$, we have $\NN(\lam_n)/(n\lam_n) = \sigma^{-2} \phi(\lam_n) = o_n(1)$,
  so that $\lam_n$ 
  satisfies the second condition in the set defining $\lam_{\min}$ in equation~\eqref{eq:lambdamin}
  for $n$ big enough, which we assume also to be the case from now on.

  To check that $\lam_n$ 
  satisfies the first condition in the set appearing in~\eqref{eq:lambdamin},
  observe that for $n$ big enough, putting $\eps_n:= 100 q \kappa^2 \letan^2/n = \Theta((\log \log n)^2/n)$, we have for any $C>0$ and $n$ large enough, from the assumption $\phi^2(\lambda)/\NN(\lambda) = o_{\lambda}( (\log \log \lambda^{-1})^{-2})$:
  \[
    \comfun(\eps_n) = \frac{\phi^2(\eps_n)}{\NN(\eps_n)} \eps_n
    < C (\log \log \eps_n^{-1})^{-2} \eps_n \leq \frac{ \sigma^2}{n},
  \]
  where the last inequality holds if we choose $C$ appropriately small enough. From the
  above inequality we deduce $\lam_n = \comfun^{-1}(\frac{ \sigma^2}{n}) > 100 q \kappa^2 \letan^2/n$; thus we have ensured 
  $\lam_n > \lambda_{\min}$ for $n$ large enough. With this choice of parameter in the right-hand side of \eqref{eq:trueoracle}, using $\lambda_n \rightarrow 0, \NN(\lambda_n) \rightarrow \infty$
  we deduce 
  $S(n,\lam_n) \leq C_\triangle \sqrt{\sigma^2 \NN(\lambda_n)}/{\sqrt{\lambda_n} n} = C'_\triangle \phi(\lambda_n)$, ${\lipc \varphi_2(\kappa^2)}\kappa^2/{\sqrt{n}} = o_n(S(n,\lambda_n))$, 
  leading to the announced conclusion.
\end{proof}

We now give the main steps of the proof to establish
Theorem~\ref{theo:mainlearning}, with technical results relegated to the Appendix.
As announced previously, the cornerstone of the analysis is to relate empirical and
population quantities. The most useful probabilistic estimates are summarized in the following proposition.
In order to give all estimates a similar form, we introduce the notation
\begin{equation}
  \label{eq:bnab}
  \bnl(a,b) := \frac{1}{\sqrt{\lambda}}
   \paren{a\frac
 {2\kappa}{\sqrt{\lambda}n} + b\sqrt{\frac{\NN(\lam)}{n}}}.
\end{equation}
\begin{prop}
  \label{prop:probestimates_main2}
  Suppose Assumption~\ref{ass:bernstein} holds.
  Let $\lambda>0$ be fixed, and $\eta \in (0,1)$, and put $\leta:=2 \log ( 8 / \eta)$.
  There exists an event $\Omega_{\lambda,\eta}$ of probability at least $1 - \eta$,
  such that the following estimates hold simultaneously:
%
  \begin{itemize}
    \item Multiplicative operator perturbation bound:
  \begin{equation}
  \label{eq:phipertl2}
  \norm{\paren{ \lam + B}^{\frac{1}{2}}(\lam + B_{\x})^{-\frac{1}{2}}} 
  \leq
\leta \paren{\bnl(\kappa,\kappa) +1}.
\end{equation}
\item Multiplicative effective dimension bound:
if $\lambda$ is such that $\lambda\geq \frac{4\kappa^2}{n}$ holds:
   \begin{equation}
       \label{eq:reldeveffdim2}
       \max\paren{\frac{\NN(\lam)\vee 1}{\NN_{\x}(\lam)\vee 1}, \frac{\NN_{\x}(\lam)\vee 1}{\NN(\lam)\vee 1}}
       \leq \paren{1 +\frac{4\kappa \leta}{\sqrt{\lam n}}}^2.
   \end{equation} 
%
%
   \item Main estimate bound:
  if the target function $f_\rho$ satisfies a source condition with
  index function $\phi$ as detailed in \eqref{eq:source} and such that the
  qualification $\psi$ of the regularization covers $t\mapsto \sqrt{t}\phi(t)$,
  and if $\lambda$ is such that $\lambda \geq 100 \kappa^2 \leta^2/n$, then:
  \begin{equation}
    \label{eq:assmpt1prob}
 \norm{\paren{B_{\x} + \lam}^{1/2}\paren{\fo - \fzl}}_{\Hr}  \leq 
 4 \ol{\gamma}_\psi \paren{1 + \bnl(\kappa,\kappa)}^2 \leta^2 \sqrt \lam \paren{\lipc \varphi_2(\kappa^2) \kappa^2 n^{-\frac{1}{2}}  + \phi(\lam)
   + \bnlab {M}{\sigma}}.
\end{equation}
\end{itemize}
\end{prop}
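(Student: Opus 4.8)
The plan is to build the single event $\Omega_{\lambda,\eta}$ as the intersection of a handful of concentration events, each governed by a Bernstein-type inequality for sums of i.i.d.\ self-adjoint Hilbert--Schmidt operators (respectively, Hilbert-space vectors); the factor $\leta = 2\log(8/\eta)$ is calibrated precisely to absorb the resulting union bound together with the numerical constants of the operator Bernstein inequality. The two ``master'' random objects to be concentrated are the regularized operator fluctuation $(B+\lam)^{-1/2}(B-B_\x)(B+\lam)^{-1/2}$ and the regularized noise vector $(B+\lam)^{-1/2}\incl_\x^\ast\bm{\varepsilon}$, where $\bm{\varepsilon} := \y - \incl_\x\fo$; all three displayed estimates are then read off on $\Omega_{\lambda,\eta}$.

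For \eqref{eq:phipertl2} and \eqref{eq:reldeveffdim2} I would apply the operator Bernstein inequality to $\zeta_i := (B+\lam)^{-1/2}(K_{X_i}\otimes K_{X_i}^\ast)(B+\lam)^{-1/2}$, whose empirical mean is $(B+\lam)^{-1/2}B_\x(B+\lam)^{-1/2}$ and whose expectation is $(B+\lam)^{-1/2}B(B+\lam)^{-1/2}$, using the almost-sure bound $\norm{\zeta_i}\leq\kappa^2/\lam$ and the operator-norm variance proxy $\leq\kappa^2/\lam$. This controls the fluctuation $s:=\norm{(B+\lam)^{-1/2}(B-B_\x)(B+\lam)^{-1/2}}$ by $4\kappa\leta/\sqrt{\lam n}$, and the threshold $\lam\geq 100\kappa^2\leta^2/n$ (resp.\ $\lam\geq 4\kappa^2/n$) forces $s<1$. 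Then \eqref{eq:phipertl2} follows from the elementary identity $\norm{(B+\lam)^{1/2}(B_\x+\lam)^{-1/2}}^2=\norm{(B_\x+\lam)^{-1/2}(B+\lam)(B_\x+\lam)^{-1/2}}\leq(1-s)^{-1}$ (obtained by writing $B+\lam=(B_\x+\lam)+(B-B_\x)$ and conjugating), the stated right-hand side $\leta(\bnl(\kappa,\kappa)+1)$ being a deliberately slack but convenient uniform majorant of $(1-s)^{-1/2}$. For \eqref{eq:reldeveffdim2} I would write $\NN(\lam)=\norm{B^{1/2}(B+\lam)^{-1/2}}_\hs^2$ and $\NN_\x(\lam)=\norm{B_\x^{1/2}(B_\x+\lam)^{-1/2}}_\hs^2$ and transfer between them, in both directions, by inserting the multiplicative factors just controlled; this produces exactly the factor $(1+4\kappa\leta/\sqrt{\lam n})^2$.

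For the main estimate \eqref{eq:assmpt1prob}, using $\incl_\x^\ast\incl_\x=B_\x$ and $\y=\incl_\x\fo+\bm{\varepsilon}$ I would split $\fo-\fzl = r_\lam(B_\x)\fo - g_\lam(B_\x)\incl_\x^\ast\bm{\varepsilon}$. For the noise part, peel off $(B_\x+\lam)^{1/2}g_\lam(B_\x)(B_\x+\lam)^{1/2}$, whose norm is $O(\ol{\gamma}_\psi)$ by \eqref{eq:supg} and \eqref{eq:supgt}, then one multiplicative factor \eqref{eq:phipertl2} to replace $(B_\x+\lam)^{-1/2}$ by $(B+\lam)^{-1/2}$, and finally the Hilbert-space Bernstein inequality applied to the centered (since $\E[Y\mid X]=\fo(X)$) vectors $\varepsilon_i(B+\lam)^{-1/2}K_{X_i}$; with almost-sure bound $M\kappa/\sqrt\lam$ and variance proxy $\sigma^2\NN(\lam)$ furnished by Assumption~\ref{ass:bernstein}, this yields the $\sqrt\lam\,\bnlab{M}{\sigma}$ term. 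For the bias part $r_\lam(B_\x)\fo=r_\lam(B_\x)\phi(B)h$ I would swap $\phi(B)$ for $\phi(B_\x)$: the \emph{matched} piece $(B_\x+\lam)^{1/2}r_\lam(B_\x)\phi(B_\x)h$ is bounded by $\sup_{0<t\leq\kappa^2}(t+\lam)^{1/2}|r_\lam(t)|\phi(t)$, which by the $t\mapsto\sqrt{t}\phi(t)$-qualification together with Proposition~\ref{prop:interpreg} is $O(\ol{\gamma}_\psi\sqrt\lam\,\phi(\lam))$, giving the $\phi(\lam)$ term.

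The main obstacle is the \emph{mismatch} $\phi(B)-\phi(B_\x)$, which earlier works circumvented by assuming operator monotonicity of the source function; here it is handled through the decomposition $\phi=\varphi_1\varphi_2$. Writing $\phi(B)-\phi(B_\x)=(\varphi_1(B)-\varphi_1(B_\x))\varphi_2(B)+\varphi_1(B_\x)(\varphi_2(B)-\varphi_2(B_\x))$, the first summand is controlled by an operator-Lipschitz estimate $\norm{\varphi_1(B)-\varphi_1(B_\x)}\leq\lipc\norm{B-B_\x}$ together with $\norm{\varphi_2(B)}\leq\varphi_2(\kappa^2)$, while a further (trace-class) Bernstein bound gives $\norm{B-B_\x}$ of order $\kappa^2\leta/\sqrt n$; together these furnish the announced $\lipc\varphi_2(\kappa^2)\kappa^2 n^{-1/2}$ contribution. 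The genuinely delicate point is the second summand: $\varphi_2$, being merely sublinear, need not be operator-Lipschitz (e.g.\ $\varphi_2=\sqrt{\cdot}$, which is not even Lipschitz at $0$), so $\varphi_2(B)-\varphi_2(B_\x)$ must be estimated by a dedicated argument exploiting that $t\mapsto\varphi_2(t)/t$ is nonincreasing (and the subadditivity this entails, suitably lifted to positive operators) rather than any Lipschitz or monotonicity property; this is exactly the technical device announced in the introduction. Finally, collecting the three contributions and over-bounding the several prefactors by the common factor $(1+\bnl(\kappa,\kappa))^2\leta^2$, followed by the union bound over the underlying concentration events, yields \eqref{eq:assmpt1prob} and completes the proof.
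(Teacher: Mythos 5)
Your overall architecture tracks the paper's closely: same bias/noise split $\fo-\fzl=r_\lam(B_\x)\fo-g_\lam(B_\x)\incl_\x^*(\y-\incl_\x\fo)$, same peeling of the noise term through $(B_\x+\lam)^{1/2}g_\lam(B_\x)$, a change-of-operator factor and a Hilbert-space Bernstein bound, same qualification bound for the matched bias piece, and the same Lipschitz/Hoeffding treatment of the $\varphi_1$ mismatch. The genuine gap sits exactly at the point you defer. You set up the $\varphi_2$ mismatch \emph{additively}, via $\phi(B)-\phi(B_\x)=(\varphi_1(B)-\varphi_1(B_\x))\varphi_2(B)+\varphi_1(B_\x)(\varphi_2(B)-\varphi_2(B_\x))$, and assert that $\norm{\varphi_2(B)-\varphi_2(B_\x)}$ can be controlled by ``lifting subadditivity to positive operators''. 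No such lift exists: the scalar inequality $\abs{\varphi_2(a)-\varphi_2(b)}\leq\varphi_2(\abs{a-b})$ does not transfer to non-commuting positive operators unless $\varphi_2$ is operator monotone --- which is precisely the hypothesis this paper is removing. Worse, even granting the best available additive bound (take $\varphi_2(t)=t^{1/4}$, which \emph{is} operator monotone, so $\norm[1]{B^{1/4}-B_\x^{1/4}}\leq\norm{B-B_\x}^{1/4}\sim n^{-1/8}$), your decomposition produces the term $2\ol{\gamma}_\psi\sqrt{\lam}\,n^{-1/8}$, which is \emph{not} dominated by the right-hand side of \eqref{eq:assmpt1prob}: with $\varphi_1\equiv 1$ (so $\lipc=0$), a fast-decaying spectrum ($\NN(\lam)\sim\log(1/\lam)$) and $\lam\sim n^{-5/8}$, one has $\phi(\lam)+\bnl(M,\sigma)\sim n^{-5/32}$ while $n^{-1/8}=n^{-4/32}$, so the ratio diverges like $n^{1/32}$. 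Any additive treatment effectively requires $\norm{B-B_\x}\lesssim\lam$, i.e.\ $\lam\gtrsim n^{-1/2}$, reinstating the very ``minimal smoothness'' restriction the result is designed to avoid. The paper never forms the difference $\varphi_2(B)-\varphi_2(B_\x)$: it dominates $\varphi_2(B)\preceq\varphi_2(B+\lam)$, converts the mismatch into the \emph{multiplicative} factor $\Xixl^{\varphi_2}=\norm{\varphi_2(\lam+B)\varphi_2(\lam+B_\x)^{-1}}$, bounded by $(\Psi_\x+1)^2$ for \emph{every} sublinear nondecreasing $\varphi_2$ through a two-eigenbasis expansion reducing to the scalar bound $\abs{\varphi(\mu)/\varphi(\nu)-1}\leq\abs{\mu/\nu-1}$ (Lemmas~\ref{lem:zhou} and~\ref{lem:phipert}), and only then invokes subadditivity in the harmless commuting form $\varphi_2(B_\x+\lam)\preceq\varphi_2(B_\x)+\varphi_2(\lam)$ to extract $\phi(\lam)$.

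Two secondary mismatches with the statement you are proving. First, your route to \eqref{eq:phipertl2} via $\norm[1]{(B+\lam)^{1/2}(B_\x+\lam)^{-1/2}}^2\leq(1-s)^{-1}$ requires $s<1$, hence a lower bound on $\lam$; but \eqref{eq:phipertl2} carries no restriction on $\lam$, and the paper obtains it for every $\lam>0$ because its bound $(\Psi_\x+1)^2$ (from the Hilbert--Schmidt estimate on $\Psi_\x$, the decomposition of $(I+S)(I+S_\x)^{-1}-I$ in Lemma~\ref{lem:zhou}, and Cordes' inequality in Proposition~\ref{prop:genperturb}) stays finite even when the perturbation is large. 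Second, your ``purely multiplicative'' transfer for \eqref{eq:reldeveffdim2} is incomplete: inserting $\norm[1]{(B+\lam)^{1/2}(B_\x+\lam)^{-1/2}}^2$ only relates $\NN_\x(\lam)$ to $\tr{(B+\lam)^{-1}B_\x}$, and the deviation of that trace from $\NN(\lam)$ still needs a separate scalar Bernstein bound, plus the cross term controlled via $\Psi_\x\sqrt{\NN_\x}$ and a final quadratic-inequality argument; this is the content of Lemma~\ref{lem:n-nx}, and it is where the exact factor $\paren{1+4\kappa\leta/\sqrt{\lam n}}^2$ actually comes from.
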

Observe that estimate~\eqref{eq:assmpt1prob} takes the form of Assumption~\ref{ass:error} for empirical quantities, holding with high probability; while
estimates~\eqref{eq:phipertl2} and~\eqref{eq:reldeveffdim2} will allow us to go from
an oracle inequality with respect to empirical quantities back to population quantities.
The proof for Proposition~\ref{prop:probestimates_main2} 
is relegated to Appendix~\ref{sec:proof-prop2}; with these results at hand
we turn to proving the main result. 

\begin{proof}[Proof of Theorem~\ref{theo:mainlearning}]
  Define the (deterministic) grid
 \pmathe{ \[
    \Lambda_0 := \set{ \lambda \in\Lam^{(q)} \ \text{ s.t. }\;
      \lambda\geq 100\kappa^2 \letan^2/n}.
  \]
  }
  The first step is to ensure the validity of estimates appearing in Proposition~\ref{prop:probestimates} 
  simultaneously for all
  $\lambda \in \Lambda_0$. This is achieved by a simple union bound, and since the cardinality
  of $\Lambda_0$ is bounded as $\abs{\Lambda_0} \leq 1 + \frac{\log n - \log (100\letan^2)}{\log q} \leq (\log n)/(\log q)$ (since $\letan\geq 1$), we obtain estimates uniformly valid
  over $\lambda \in \Lambda_0$ if, in Proposition~\ref{prop:probestimates}, 
  we replace $\eta$ by $\eta (\log n) / (\log q)$, and in consequence
  $\leta$ by $\letan$. For the rest of this proof, we assume to be on the corresponding
  high probability event where all estimates hold.
  
  For all $\lambda \in \Lambda_0$ estimate~\eqref{eq:reldeveffdim2} yields
  \begin{equation}
    \label{eq:nnbound}
    \max\paren{\frac{\NN(\lam)\vee 1}{\NN_{\x}(\lam)\vee 1}, \frac{\NN_{\x}(\lam)\vee 1}{\NN(\lam)\vee 1}}
    \leq \paren[3]{1 +\frac{4\kappa \letan}{\sqrt{\lam n}}}^2 \leq 2.
  \end{equation}
  This estimate holds in particular for all $\lambda \in \Lambda_\x \subset \Lambda_0$, and
  the second condition in the definition~\eqref{eq:gridz} of $\Lambda_\x$ combined with the above yields
  \begin{equation}
    \forall \lambda \in \Lambda_\x: \qquad
    \bnl(\kappa,\kappa) = 
   \paren{\frac
     {2\kappa^2}{\lam n} + \sqrt{ \frac{\kappa^2\NN(\lam)}{\lam n}}} \leq \paren{\frac{1}{50}
   + \sqrt{\frac{\NN(\lam)}{3(\NN_\z(\lam)\vee 1)}}} \leq 1.
  \end{equation}
  In turn, this yields from estimate~\eqref{eq:assmpt1prob} that for all $\lambda \in \Lambda_\x$:
  \begin{align*}
    \norm{\paren{B_{\x} + \lam}^{1/2}\paren{\fo - \fzl}}_{\Hr}
    & \leq 
16 \ol{\gamma}_\psi  \letan^2 \sqrt \lam \paren{ \frac{\lipc \varphi_2(\kappa^2)\kappa^2}{\sqrt n}  +  \phi(\lam)
                                                               + \frac{2\kappa M}{\lambda n} + \sqrt{\frac{\sigma^2\NN(\lambda)}{\lam n}}}\\
    &\leq 
16 \ol{\gamma}_\psi  \letan^2 \sqrt \lam \paren{ \frac{\lipc \varphi_2(\kappa^2)\kappa^2}{\sqrt n}  + \phi(\lam)
 + \frac{M}{5\sqrt{\lambda n}} + \sqrt{\frac{2\sigma^2(\NN_\z(\lambda) \vee 1)}{\lam n}}}.
\end{align*}
Thus, Assumption~\ref{ass:error} is satisfied, with $A:= B_\x$,
$\A(\lam) := \phi(\lam) + \lipc\varphi_2(\kappa^2)\kappa^2/\sqrt{n}$, $\es(\lam) := \es_\x(n,\lambda)$,
and $C:=16 \ol{\gamma}_\psi \letan^2$.

The choice~\eqref{hatlamz} of $\hat\lam_\z$ corresponds exactly to~\eqref{hatlam} with the quantities defined above.
 Thus the assumptions of Theorem~\ref{theo:abstractoracle} are  satisfied for the quantities
defined above, and we deduce from point (1) of the theorem:
\[
  \norm[1]{(B_\x+\lamstar)^{\frac{1}{2}}(\fd - f^{\hat\lam})} \leq 6 C \sqrt{\lamstar} \wt{\es}_\x(n,\lamstar),
\]
where $\lambda^*$ is as defined in~\eqref{eq:lamstar-new2},
and $\wt{\es}_{\x}(n,\lamstar) = \max(\es_{\x}(n,\lamstar),\A(\lamstar))$. This implies, using~\eqref{eq:phipertl2}:
\[
  \norm[1]{(B+\lamstar)^{\frac{1}{2}}(\fd - f^{\hat\lam})}
  \leq   \norm[1]{(B+\lamstar)^{\frac{1}{2}}(B_\x+\lamstar)^{-\frac{1}{2}}} \norm[1]{(B_\x+\lamstar)^{\frac{1}{2}}(\fd - f^{\hat\lam})}
  \leq 12 \letan C \sqrt{\lamstar} \wt{\es}_\x(n,\lamstar);
\]
then, using Proposition~\ref{prop:interptool}, for any for any $s\in[0,\frac{1}{2}]$ we have:
\[
  \norm{B^s(\fd - f^{\hat\lam})} \leq 12 \letan C \lamstar^s \wt{\es}_\x(n,\lamstar).
\]
\pmathe{By using the estimate~(\ref{eq:nqlambda}) we find for two successive elements
of $\Lambda_\x$ that $\NN_\x(\lam_{i}) \leq q \NN_\x(\lambda_{i-1})$,
and consequently that~$\es_\x(n,\lambda_i) \leq q
\es_\x(n,\lambda_{i-1})$. This allows to apply item~(\ref{it:2}) of
Theorem~\ref{theo:abstractoracle}.}

Thus, for any for any $s\in[0,\frac{1}{2}]$:
\begin{align*}
  \norm{B^s(\fd - f^{\hat\lam})}
  & \leq 12 \letan C q \min_{\lambda \in [\min \Lam_\x, \kappa^2]} \set{\lambda^s(\A(\lambda) + \es_\x(n,\lambda)}.
\end{align*}
To finish the proof, it suffices to note that $\min \Lambda_\x \leq \lam_{\min}$
(as defined by~\eqref{eq:lambdamin}) as well as
$\es_\x(n,\lambda) \leq 2\es(n,\lambda)$, both as a consequence of~\eqref{eq:nnbound}.
\end{proof}

\appendix



\section{Probabilistic estimates for fixed regularization parameter}     
\label{sec:prob-estimates}

Here we derive some standard probabilistic bounds in a novel
form. Although the main ingredient is the standard probabilistic bound
for Hilbert-space random variables, the novel approach highlights the
structure of the estimates more clearly.

For the analysis in this section, we assume $\lambda>0$ has been fixed; it is
then convenient to introduce the ``standardized'' quantities
$S:= \lam^{-1}B$, $S_{\x} := \lambda^{-1}B_{\x}$, and
$\bar K_{x} := \lambda^{-1/2}K_{x}$.
Finally, let $\varphi:\R_+ \rightarrow \R_+$ be a
nondecreasing and sublinear function.
We introduce shorthand notation for some key quantities (the default norm for
operators is the operator norm, while the index HS indicates
Hilbert-Schmidt norm), where a sample~$\z = (\x,y)$ is fixed. We let
\begin{align}
  \Gamma_{\x} & := \norm{S-S_{\x}}_{\hs}; \label{it:gammax}\\
  \Psi_{\x} &:= \norm{\paren{I + S}^{-1/2}(S - S_{\x})}_{\hs}; 
\label{it:psix} \\
  \Xi^\varphi_{\x} & := \norm{\varphi\paren{I + S}\varphi\paren{I + S_{\x}}^{-1}} ; \\
  \Theta_{\z}& := \norm{ \paren{I +
  S}^{-1/2} \frac 1 n \sum_{j=1}^{n} (y_{j} - \frho(x_{j}))\bar K_{x_{j}}};\\
  \NN = \NN(\lambda) & := \tr{\paren{I + S}^{-1}S};\label{it:effdim}\\
  \NN_\x = \NN_\x(\lambda) & := \tr{\paren{I + S_\x}^{-1}S_\x}.\label{it:effdimemp}
\end{align}

We repeat here for convenience the notation introduced in~\eqref{eq:bnab}:
\begin{equation}
  \label{eq:bnab2}
  \bnl(a,b) := \frac{1}{\sqrt{\lambda}}
   \paren{a\frac
 {2\kappa}{\sqrt{\lambda}n} + b\sqrt{\frac{\NN}{n}}},
\end{equation}
allowing to give all bounds a similar form.
The following proposition states the estimates needed for our analysis.
It subsumes the two first estimates of Proposition~\ref{prop:probestimates_main2}
in the main text.
\begin{prop}
  \label{prop:probestimates}
  Suppose Assumption~\ref{ass:bernstein} is satisfied.
  Let $\lambda>0$ be fixed, and $\eta \in (0,1)$, and put $L:=2 \log ( 8 / \eta)$.
  There exists an event $\Omega_{\lambda,\eta}$ of probability at least $1 - \eta$,
  such that the following estimates hold simultaneously:
%
  \begin{align}
        \label{eq:devHShoeff}
  \Gamma_{\x,\lam} := \lam\Gamma_{\x} = \norm{B - B_{\x}}_{\hs}
    & \leq  L \frac{\kappa^2}{\sqrt{n}}, \\
    \label{eq:devHS}
  \Psi_{\x,\lam} := \sqrt{\lam}\Psi_{\x} = \norm{\paren{\lam + B}^{-1/2}\paren{B - B_{\x}}}_{\hs}
    & \leq  L \sqrt\lam \bnl(\kappa,\kappa),\\
    \label{eq:devtheta}
    \Theta_{\z,\lam} :=
    \Theta_{\z} = \norm{\paren{\lam + B}^{-1/2}\paren{B_{\x} f_{\rho} - \incl_{\x}^{\ast}\y}} 
    & \leq  L \sqrt{\lambda}\bnl(M,\sigma),\\
    \label{eq:deveffdim}
     \abs{\NN(\lam) - \NN_{\x}(\lam)} & \leq L (1 + \sqrt{\NN_{\x}(\lam)})\bnl(\kappa,\kappa).
\end{align}
Inequality \eqref{eq:devHS} implies for any sublinear nondecreasing function
$\varphi$:
\begin{equation}
  \label{eq:phipertl}
  \Xi^\varphi_{\x,\lam} := \norm{\varphi\paren{ \lam + B}\varphi(\lam + B_{\x})^{-1}}   \leq
L^{2} \paren{\bnl(\kappa,\kappa) +1}^2,
\end{equation}
as well as the slightly sharper estimate
\begin{equation}
  \label{eq:cordespert}
  \Xi^r_{\x,\lam} := \norm{\paren{ \lam + B}^r(\lam + B_{\x})^{-r}}   \leq
L^{2r} \paren{\bnl(\kappa,\kappa) +1}^{2r}, \qquad r\in[0,1].
\end{equation}
Inequality \eqref{eq:deveffdim} implies provided that $\lambda\geq \frac{4\kappa^2}{n}$:
   \begin{equation}
       \label{eq:reldeveffdim}
       \max\paren{\frac{\NN(\lam)\vee 1}{\NN_{\x}(\lam)\vee 1}, \frac{\NN_{\x}(\lam)\vee 1}{\NN(\lam)\vee 1}}
       \leq \paren{1 +  \frac{4\kappa L}{\sqrt{\lambda n}}}^2.
   \end{equation} 
\end{prop}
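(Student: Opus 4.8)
The plan is to derive the four concentration statements \eqref{eq:devHShoeff}, \eqref{eq:devHS}, \eqref{eq:devtheta}, \eqref{eq:deveffdim} from a single Bernstein-type inequality for i.i.d. sums of random elements of a Hilbert space, and then to obtain \eqref{eq:phipertl}, \eqref{eq:cordespert}, \eqref{eq:reldeveffdim} as purely deterministic consequences. First I would record the standard tool: if $\xi_1,\dots,\xi_n$ are i.i.d., centered, and obey $\E\norm{\xi_i}^k\le\frac{k!}{2}\widetilde\sigma^2\widetilde M^{k-2}$ for all $k\ge2$, then with probability at least $1-\eta'$ one has $\norm{n^{-1}\sum_i\xi_i}\le 2\log(2/\eta')\paren{\widetilde M/n+\widetilde\sigma/\sqrt n}$. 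Each of the four estimates is then one application of this bound with a suitable choice of $\xi_i$; running the four applications at level $\eta/4$ and intersecting the resulting events produces the single event $\Omega_{\lambda,\eta}$ of probability at least $1-\eta$ and turns $2\log(2/(\eta/4))$ into the stated constant $L=2\log(8/\eta)$.

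For \eqref{eq:devHShoeff} I take $\xi_i=K_{x_i}\otimes K_{x_i}^*-B$ as a Hilbert--Schmidt operator; since $\norm{K_{x_i}\otimes K_{x_i}^*}_{\hs}=K(x_i,x_i)\le\kappa^2$, both Bernstein parameters are at most $2\kappa^2$ and the $1/n$ term is dominated by the $1/\sqrt n$ term. For \eqref{eq:devHS} I precondition, $\xi_i=(\lambda+B)^{-1/2}\paren{K_{x_i}\otimes K_{x_i}^*-B}$; here $\norm{\xi_i}_{\hs}\le 2\kappa^2/\sqrt\lambda$, while the key identity $\E\norm{(\lambda+B)^{-1/2}K_X}^2=\tr{(\lambda+B)^{-1}B}=\NN(\lambda)$ gives $\widetilde\sigma^2\le\kappa^2\NN(\lambda)$, so the two terms reproduce $\sqrt\lambda\,\bnl(\kappa,\kappa)$. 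For \eqref{eq:devtheta} I rewrite $B_{\x}f_\rho-\incl_{\x}^{*}\y=-n^{-1}\sum_i(y_i-f_\rho(x_i))K_{x_i}$ and take $\xi_i=(y_i-f_\rho(x_i))(\lambda+B)^{-1/2}K_{x_i}$, centered because $\E[y_i\mid x_i]=f_\rho(x_i)$; conditioning on $x_i$ and combining Assumption~\ref{ass:bernstein} with $\norm{(\lambda+B)^{-1/2}K_X}\le\kappa/\sqrt\lambda$ and the same variance identity yields $\widetilde M\le M\kappa/\sqrt\lambda$ and $\widetilde\sigma^2\le\sigma^2\NN(\lambda)$, matching $\sqrt\lambda\,\bnl(M,\sigma)$. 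This is the one place where the noise moment assumption is genuinely used.

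The effective-dimension bound \eqref{eq:deveffdim} needs an extra decomposition to produce the $\sqrt{\NN_{\x}}$ prefactor:
\[
\NN(\lambda)-\NN_{\x}(\lambda)=\tr{(\lambda+B)^{-1}(B-B_{\x})}+\tr{\big((\lambda+B)^{-1}-(\lambda+B_{\x})^{-1}\big)B_{\x}}.
\]
The first trace equals the centered real average $n^{-1}\sum_i\paren{\NN(\lambda)-\inner{(\lambda+B)^{-1}K_{x_i},K_{x_i}}}$, whose summands lie in $[0,\kappa^2/\lambda]$ with variance at most $\kappa^2\NN(\lambda)/\lambda$, so the scalar Bernstein inequality bounds it by $L\,\bnl(\kappa,\kappa)$. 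For the second trace I apply the resolvent identity $(\lambda+B)^{-1}-(\lambda+B_{\x})^{-1}=(\lambda+B)^{-1}(B_{\x}-B)(\lambda+B_{\x})^{-1}$, write $(\lambda+B_{\x})^{-1}B_{\x}=D^2$ with the self-adjoint $D:=(\lambda+B_{\x})^{-1/2}B_{\x}^{1/2}$, and use cyclicity of the trace together with $\abs{\tr{XY}}\le\norm{X}_{\hs}\norm{Y}_{\hs}$ to bound it by $\norm{D}_{\hs}\,\norm{(\lambda+B)^{-1}(B-B_{\x})}_{\hs}$; since $\norm{D}_{\hs}=\sqrt{\NN_{\x}(\lambda)}$ and, by \eqref{eq:devHS}, $\norm{(\lambda+B)^{-1}(B-B_{\x})}_{\hs}\le\lambda^{-1/2}\Psi_{\x,\lam}\le L\,\bnl(\kappa,\kappa)$, the two pieces combine to \eqref{eq:deveffdim}.

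It remains to turn \eqref{eq:devHS} and \eqref{eq:deveffdim} into the multiplicative bounds. Setting $u:=\norm{(\lambda+B)^{1/2}(\lambda+B_{\x})^{-1/2}}$, I would use $u^2=\norm{(\lambda+B)^{1/2}(\lambda+B_{\x})^{-1}(\lambda+B)^{1/2}}$ and the expansion $(\lambda+B_{\x})^{-1/2}(\lambda+B)(\lambda+B_{\x})^{-1/2}=I+(\lambda+B_{\x})^{-1/2}(B-B_{\x})(\lambda+B_{\x})^{-1/2}$, estimating the perturbation by $u\,\lambda^{-1/2}\Psi_{\x,\lam}\le u\,L\,\bnl(\kappa,\kappa)$; this gives the self-bounding quadratic $u^2\le 1+u\,L\,\bnl(\kappa,\kappa)$, hence $u\le L\,\bnl(\kappa,\kappa)+1$, and then $\norm{(\lambda+B)(\lambda+B_{\x})^{-1}}\le 1+u\,L\,\bnl(\kappa,\kappa)\le L^2(\bnl(\kappa,\kappa)+1)^2$. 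Cordes' inequality $\norm{A^rC^r}\le\norm{AC}^r$ with $A=\lambda+B$, $C=(\lambda+B_{\x})^{-1}$ then gives \eqref{eq:cordespert} for all $r\in[0,1]$, and \eqref{eq:reldeveffdim} follows from \eqref{eq:deveffdim} by the same quadratic device applied to $a=\sqrt{\NN(\lambda)\vee1}$, $b=\sqrt{\NN_{\x}(\lambda)\vee1}$, after using $\lambda\ge4\kappa^2/n$ to absorb $\bnl(\kappa,\kappa)\le 2\kappa a/\sqrt{\lambda n}$. I expect the main obstacle to be exactly these self-bounding steps: a naive Neumann-series estimate of $u$ would illegitimately require $L\,\bnl(\kappa,\kappa)<1$, so the quadratic argument is essential to obtain a bound valid for all $\lambda$; and promoting the power $r=1$ estimate to the general sublinear, nondecreasing index function $\varphi$ in \eqref{eq:phipertl} cannot use operator monotonicity and instead relies on the noncommutative sublinearity device announced in the introduction, which compares $\varphi$ to the identity on the joint spectrum of the two operators.
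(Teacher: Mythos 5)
Your treatment of the four probabilistic estimates \eqref{eq:devHShoeff}--\eqref{eq:deveffdim} is essentially the paper's own proof: the same preconditioned random variables fed into the same Hilbert-space Bernstein inequality, the same two-term decomposition of $\NN-\NN_\x$ (your resolvent-identity form of the second trace is the paper's term $I_2$, and your bound $\norm{D}_{\hs}=\sqrt{\NN_\x}$ is the paper's interpolation $\norm{(I+S_\x)^{-1}S_\x}_{\hs}^2\leq\norm{(I+S_\x)^{-1}S_\x}\,\norm{(I+S_\x)^{-1}S_\x}_1$), and the same union-bound accounting of four events at level $\eta/4$ with the $\Psi$-event reused for the second trace. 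The derivation of \eqref{eq:reldeveffdim} also matches. Two differences are worth noting. First, a minor one: for \eqref{eq:devHShoeff} the paper invokes a Hoeffding-type inequality precisely because Bernstein, as you apply it, leaves an extra term of order $\kappa^2/n$ that your write-up silently drops; this is fixable but not automatic. Second, a genuinely different and valid route: for the multiplicative perturbation bound you use a self-bounding quadratic in $u=\norm[1]{(\lam+B)^{1/2}(\lam+B_\x)^{-1/2}}$, namely $u^2\leq 1+u\,L\,\bnl(\kappa,\kappa)$, whereas the paper bounds $\norm{(I+S)(I+S_\x)^{-1}-I}_{\hs}\leq\Psi_\x+\Psi_\x^2$ via a second-order expansion (Lemma~\ref{lem:zhou}). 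Your quadratic argument correctly yields the $r=1$ operator-norm bound and then \eqref{eq:cordespert} via Cordes, just as the paper does.

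The genuine gap is \eqref{eq:phipertl} for a general sublinear nondecreasing $\varphi$ --- exactly the step the paper advertises as its technical novelty. You defer it to a ``noncommutative sublinearity device \ldots\ on the joint spectrum of the two operators,'' but you neither state nor prove this device, and, more importantly, your pipeline cannot feed it as it stands. The paper's device (Lemma~\ref{lem:phipert}) is a \emph{Hilbert--Schmidt} contraction property: $\norm{\varphi(A)\varphi(B)^{-1}-I}_{\hs}\leq\norm{AB^{-1}-I}_{\hs}$, proved by expanding in the two eigenbases, where the HS norm gives the exact identity $\norm{AB^{-1}-I}_{\hs}^2=\sum_{i,j}\paren{\nu_j/\mu_i-1}^2\inner{f_i,e_j}^2$ and the scalar inequality $\abs{\varphi(\nu)/\varphi(\mu)-1}\leq\abs{\nu/\mu-1}$ is applied term by term. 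This entrywise formula is specific to the HS norm; there is no analogue for the operator norm, so ``comparing $\varphi$ to the identity on the joint spectrum'' cannot be applied to the operator-norm bound $\norm{(\lam+B)(\lam+B_\x)^{-1}}\leq 1+u\,L\,\bnl(\kappa,\kappa)$ that your quadratic argument produces. To close the gap you would need an HS-norm input, for instance
\begin{equation*}
  \norm[1]{(\lam+B)(\lam+B_\x)^{-1}-I}_{\hs}
  =\norm[1]{(B-B_\x)(\lam+B_\x)^{-1}}_{\hs}
  \leq \Psi_{\x,\lam}\,\norm[1]{(\lam+B)^{1/2}(\lam+B_\x)^{-1}}
  \leq L\,\bnl(\kappa,\kappa)\,\frac{u}{\sqrt{\lam}}\sqrt{\lam},
\end{equation*}
using $\norm{XY}_{\hs}\leq\norm{X}_{\hs}\norm{Y}$, and then prove the sublinear comparison lemma itself. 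That combination is in substance the paper's Lemma~\ref{lem:zhou} plus Lemma~\ref{lem:phipert}; as written, your proposal asserts \eqref{eq:phipertl} without a proof of its key ingredient.
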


The rest of this section is devoted to the proof of the above proposition and is
organized as follows. In Section~\ref{se:op_pert}, we first establish the detail of the (purely deterministic) argument
for the second  statement of the 
proposition, leading from~\eqref{eq:devHS} to~\eqref{eq:phipertl}-\eqref{eq:cordespert} 
via a perturbation argument. In Section~\ref{se:prob_bounds} following it,
we establish the main probabilistic estimates \eqref{eq:devHS}--\eqref{eq:deveffdim}
as well as~\eqref{eq:reldeveffdim}. 


\subsection{Operator perturbation bounds}
\label{se:op_pert}
The quantity~$\Psi_{\x}$ can be used to obtain the following purely
deterministic perturbation bounds which are crucial to our analysis of
generalized source conditions.  The main bound, related to Cordes'
Inequality, see~\cite[Thm.~IX.2.1-2]{Bat97},   will be given in
Proposition~\ref{prop:genperturb}, wich might be of independent
interest. 

We start with the following bound
using  a decomposition as introduced in \cite{Guo_2017}.
\begin{lem}
\label{lem:zhou}
  We have that
  \begin{equation}
    \label{eq:devfrid}
    \norm{\paren{I + S}\paren{I + S_{\x}}^{-1}-I}_{\hs} \leq \Psi_{\x} + \Psi_{\x}^{2}\,.
  \end{equation}
\end{lem}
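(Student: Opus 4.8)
The statement is purely deterministic, so no probabilistic input is needed; everything rests on resolvent algebra and the interplay between the Hilbert--Schmidt and operator norms. First I would reduce the left-hand side to something directly comparable with $\Psi_{\x}$. Writing $I+S = (I+S_{\x}) + (S-S_{\x})$ and right-multiplying by $(I+S_{\x})^{-1}$ gives the algebraic identity
\[
  (I+S)(I+S_{\x})^{-1} - I = (S-S_{\x})(I+S_{\x})^{-1},
\]
so the task becomes bounding $\norm{(S-S_{\x})(I+S_{\x})^{-1}}_{\hs}$. Note that $S=\lambda^{-1}B\geq 0$ and $S_{\x}=\lambda^{-1}B_{\x}\geq 0$, hence $\norm{(I+S)^{-1/2}}\leq 1$ and $\norm{(I+S_{\x})^{-1}}\leq 1$; also, since $S-S_{\x}$ and $(I+S)^{-1/2}$ are self-adjoint, $\Psi_{\x}$ may be read equivalently as $\norm{(I+S)^{-1/2}(S-S_{\x})}_{\hs}$ or $\norm{(S-S_{\x})(I+S)^{-1/2}}_{\hs}$.

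Next I would expand $(I+S_{\x})^{-1}$ against the reference resolvent using the first-order resolvent identity
\[
  (I+S_{\x})^{-1} = (I+S)^{-1} + (I+S)^{-1}(S-S_{\x})(I+S_{\x})^{-1},
\]
which, after left-multiplying by $(S-S_{\x})$, produces a linear term plus a quadratic remainder:
\[
  (S-S_{\x})(I+S_{\x})^{-1} = (S-S_{\x})(I+S)^{-1} + (S-S_{\x})(I+S)^{-1}(S-S_{\x})(I+S_{\x})^{-1}.
\]
For the linear term I split $(I+S)^{-1}=(I+S)^{-1/2}(I+S)^{-1/2}$ and use $\norm{XY}_{\hs}\leq\norm{X}_{\hs}\norm{Y}$ with $\norm{(I+S)^{-1/2}}\leq 1$ to get $\norm{(S-S_{\x})(I+S)^{-1}}_{\hs}\leq\Psi_{\x}$. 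For the quadratic term I would regroup it as the product of three factors $\bigl[(S-S_{\x})(I+S)^{-1/2}\bigr]\bigl[(I+S)^{-1/2}(S-S_{\x})\bigr](I+S_{\x})^{-1}$ and bound it by $\norm{X}\,\norm{Y}_{\hs}\,\norm{Z}$ with the middle factor carrying the Hilbert--Schmidt norm; using $\norm{X}\leq\norm{X}_{\hs}=\Psi_{\x}$, $\norm{Y}_{\hs}=\Psi_{\x}$, and $\norm{(I+S_{\x})^{-1}}\leq 1$ yields the bound $\Psi_{\x}^{2}$. Summing the two contributions gives the claimed $\Psi_{\x}+\Psi_{\x}^{2}$.

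The only delicate point — and the main (though minor) obstacle — is the bookkeeping of which factor is measured in Hilbert--Schmidt norm and which in operator norm, so that the non-self-adjoint products telescope into exactly $\Psi_{\x}$ and $\Psi_{\x}^{2}$ with constant one. This is what forces the choice of inserting $(I+S)^{-1/2}(I+S)^{-1/2}$ at the right spots, and it relies crucially on the positivity of $S$ and $S_{\x}$ (making $(I+S)^{-1/2}$ and $(I+S_{\x})^{-1}$ contractions) together with the self-adjointness that lets $\Psi_{\x}$ be realized with $(I+S)^{-1/2}$ on either side. This expansion is exactly the perturbation decomposition in the spirit of~\cite{Guo_2017} referenced before the lemma.
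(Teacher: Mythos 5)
Your proof is correct and takes essentially the same route as the paper's: the identity you obtain from the resolvent equation is exactly the second-order perturbation decomposition (cited there from Guo et al.) on which the paper's proof rests, and your bounds on the linear and quadratic terms by $\Psi_{\x}$ and $\Psi_{\x}^{2}$ use the same ingredients (the Hilbert--Schmidt ideal property $\norm{XY}_{\hs}\leq\norm{X}_{\hs}\norm{Y}$ and contractivity of $(I+S)^{-1/2}$, $(I+S_{\x})^{-1}$). The only cosmetic difference is that you derive the decomposition explicitly, with the linear term correctly ordered as $(S-S_{\x})(I+S)^{-1}$, whereas the paper writes that factor in the adjoint order --- a discrepancy that is immaterial since the Hilbert--Schmidt norm is invariant under taking adjoints.
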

\begin{proof}
  We use the decomposition from~\cite{Guo_2017} (eq. (29) there):
  \begin{multline}
    \label{eqzhou}
\paren{I + S}\paren{I + S_{\x}}^{-1} \\= I + \paren{I + S}^{-1}(S -
S_{\x}) 
+ (S - S_{\x})(I + S)^{-1}(S - S_{\x})(I + S_{\x})^{-1}.
    \end{multline}   
We bound
$$
\norm{\paren{I + S}^{-1}(S -S_{\x})}_{\hs}\leq \norm{\paren{I + S}^{-1/2}}
\norm{\paren{I + S}^{-1/2}(S -S_{\x})}_{\hs} \leq \Psi_{\x}.
$$
Also, we see
\begin{multline*}
\norm{ (S - S_{\x})(I + S)^{-1}(S - S_{\x})(I + S_{\x})^{-1}}_{\hs}
\\ \leq \norm{\paren{(I + S)^{-1/2}(S - S_{\x})}^{\ast}(I +
  S)^{-1/2}(S - S_{\x})}_{\hs}\norm{(I + S_{\x})^{-1}}\leq \Psi_{\x}^{2}.
\end{multline*}
Plugging the two last estimates together with the triangle inequality
in \eqref{eqzhou} gives the bound \eqref{eq:devfrid}.
\end{proof}
The next lemma allows to introduce an arbitrary nondecreasing sublinear function in the
perturbation estimate in HS-norm.
\begin{lem}
  \label{lem:phipert}
  Let $A,B$ be two self-adjoint positive invertible
operators on a separable Hilbert space.
Let $\varphi:\R_+ \rightarrow \R_+$ be a nondecreasing and sublinear function,
i.e., such that $\varphi(t)/t$ is nonincreasing. Then it holds
\begin{equation}
  \label{eq:phipert}
  \norm{\varphi(A) \varphi(B)^{-1} -I}_{\hs} \leq \norm{A B^{-1} - I}_{\hs}. 
  \end{equation}

\end{lem}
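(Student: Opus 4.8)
The plan is to reduce the operator inequality to an elementary scalar inequality and then lift it back through the spectral calculus. The scalar fact I would isolate is the following: for all $s,t>0$,
\[
  \abs[3]{\frac{\varphi(s)}{\varphi(t)}-1} \leq \abs[3]{\frac{s}{t}-1},
  \qquad\text{equivalently}\qquad t\,\abs{\varphi(s)-\varphi(t)} \leq \abs{s-t}\,\varphi(t).
\]
This is proved by distinguishing two cases. If $s\geq t$, monotonicity gives $\varphi(s)\geq\varphi(t)$, and the claim reduces to $t\varphi(s)\leq s\varphi(t)$, i.e.\ $\varphi(s)/s\leq\varphi(t)/t$, which is exactly sublinearity. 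If $s<t$, it reduces to $s\varphi(t)\leq t\varphi(s)$, i.e.\ $\varphi(t)/t\leq\varphi(s)/s$, again sublinearity. I would note in passing that this inequality forces the divided differences of $\varphi$ to be bounded on any interval bounded away from $0$, so that $\varphi$ is automatically locally Lipschitz there; this removes any smoothness pathology later on.

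Next I would lift this to operators. Since $B$ is positive invertible, $AB^{-1}-I=(A-B)B^{-1}$; if $X:=A-B$ fails to be Hilbert--Schmidt then, $B$ and $B^{-1}$ being bounded, neither is $AB^{-1}-I$, the right-hand side is $+\infty$, and there is nothing to prove. So I assume $X$ is Hilbert--Schmidt and represent both operators as double operator integrals of $X$ against the spectral measures $E_A,E_B$ of $A$ and $B$. Using the standard divided-difference formula $\varphi(A)-\varphi(B)=\iint \tfrac{\varphi(s)-\varphi(t)}{s-t}\,dE_A(s)\,X\,dE_B(t)$ (valid since $\varphi$ is Lipschitz on the compact, positive spectra of $A$ and $B$) and right-multiplying by $\varphi(B)^{-1}$, I obtain
\[
  \varphi(A)\varphi(B)^{-1}-I=\iint h_\varphi(s,t)\,dE_A(s)\,X\,dE_B(t),\qquad
  h_\varphi(s,t)=\frac{\varphi(s)-\varphi(t)}{(s-t)\,\varphi(t)},
\]
while $AB^{-1}-I=\iint h_0(s,t)\,dE_A(s)\,X\,dE_B(t)$ with $h_0(s,t)=1/t$. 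Both symbols are bounded Borel functions on the product of the (compact, strictly positive) spectra: the boundedness of $h_\varphi$ is precisely the scalar inequality above, which reads $\abs{h_\varphi(s,t)}\leq 1/t=\abs{h_0(s,t)}$.

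Finally I would invoke the Hilbert--Schmidt isometry for double operator integrals: there is a positive measure $\mu_X$ on the product of the two spectra, of total mass $\norm{X}_{\hs}^2$, such that $\norm{\iint h\,dE_A\,X\,dE_B}_{\hs}^2=\iint\abs{h}^2\,d\mu_X$ for every bounded symbol $h$. Applying this to $h_\varphi$ and $h_0$ and using the pointwise bound $\abs{h_\varphi}\leq\abs{h_0}$ gives
\[
  \norm{\varphi(A)\varphi(B)^{-1}-I}_{\hs}^2=\iint\abs{h_\varphi}^2\,d\mu_X
  \leq \iint\abs{h_0}^2\,d\mu_X=\norm{AB^{-1}-I}_{\hs}^2,
\]
which is the assertion. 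The elementary content is entirely in the scalar inequality; the genuine obstacle is the lifting step, and the only delicate point in the stated generality is making ``entrywise domination'' rigorous for continuous spectrum. I would stress that in the application $A=I+S$, $B=I+S_{\x}$ have pure point spectrum, so one can bypass double operator integrals altogether: expanding $\norm{T}_{\hs}^2=\sum_{i,j}\abs{\inner{u_i,T v_j}}^2$ over eigenbases $(u_i)$ of $A$ and $(v_j)$ of $B$, the $(i,j)$ entry of $\varphi(A)\varphi(B)^{-1}-I$ equals $\paren{\varphi(a_i)/\varphi(b_j)-1}\inner{u_i,v_j}$ and that of $AB^{-1}-I$ equals $\paren{a_i/b_j-1}\inner{u_i,v_j}$, so the scalar inequality dominates the former entries by the latter termwise and summation closes the argument; the $\mu_X$-formalism is simply the version of this computation that survives for arbitrary spectra, with the symbols being legitimate bounded multipliers precisely because $B$ is bounded below and the scalar bound holds.
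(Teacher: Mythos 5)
Your proposal is correct, and its elementary content coincides exactly with the paper's: the scalar inequality $\abs{\varphi(s)/\varphi(t)-1}\leq\abs{s/t-1}$, proved by the same two-case argument from monotonicity and sublinearity, is precisely the paper's inequality (4.14), and your closing "bypass" — expanding the Hilbert--Schmidt norm over eigenbases $(u_i)$ of $A$ and $(v_j)$ of $B$ and dominating the entries $\paren{\varphi(a_i)/\varphi(b_j)-1}\inner{u_i,v_j}$ termwise by $\paren{a_i/b_j-1}\inner{u_i,v_j}$ — \emph{is} the paper's proof, word for word in different notation. Where you genuinely diverge is in making the double-operator-integral route your main argument: representing both $\varphi(A)\varphi(B)^{-1}-I$ and $AB^{-1}-I$ as integrals of the Hilbert--Schmidt perturbation $X=A-B$ against the symbols $h_\varphi(s,t)=\frac{\varphi(s)-\varphi(t)}{(s-t)\varphi(t)}$ and $h_0(s,t)=1/t$, and concluding from the Birman--Solomyak $L^2(\mu_X)$-isometry together with the pointwise domination $\abs{h_\varphi}\leq\abs{h_0}$. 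This buys genuine generality: the lemma as stated concerns arbitrary positive invertible self-adjoint operators, which need not admit eigenbases, so the paper's eigendecomposition argument strictly speaking only covers the pure-point case (harmless in the application, where $A=I+S$ and $B=I+S_\x$ have compact perturbations of the identity, but a gap for the statement as written); your DOI argument closes that gap. The price is the machinery, plus one technical point you flag but do not fully resolve: the divided-difference symbol must be specified $\mu_X$-a.e.\ \emph{on the diagonal} (where $\varphi$, being only Lipschitz there, may fail to be differentiable), and one must check that the domination $\abs{h_\varphi}\leq 1/t$ survives whatever diagonal extension makes the representation formula valid; this follows by taking limits of the off-diagonal bound along the approximation used to establish the formula, but it is the one place where "entrywise domination" requires real care. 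Since you supply the complete elementary argument as a fallback, the proof stands.
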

\begin{proof}
We start be establishing that for any sublinear function~$\varphi$ and any two positive numbers $\mu,\nu$:
\begin{equation}
  \label{eq:sublineq}
  \abs{\frac{\varphi(\mu)}{\varphi(\nu)} -1 } \leq
  \abs{\frac{\mu}{\nu} -1}\,.
  \end{equation}
  Indeed, if $\mu\geq \nu$, then $\varphi(\mu) \geq \varphi(\nu)$
  ($\varphi$ nondecreasing)
  but $\frac{\varphi(\mu)}{\varphi(\nu)} \leq \frac{\mu}{\nu}$ ($\varphi$ sublinear). Therefore
  \[
  \abs{\frac{\varphi(\mu)}{\varphi(\nu)} -1 }
  = \frac{\varphi(\mu)}{\varphi(\nu)} -1
  \leq \frac{\mu}{\nu}-1 = \abs{\frac{\mu}{\nu}-1}. 
  \]
  Similarly, if $\mu\leq \nu$, then $\varphi(\mu) \leq \varphi(\nu)$
  and $\frac{\varphi(\mu)}{\varphi(\nu)} \geq \frac{\mu}{\nu}$, so that
    \[
  \abs{\frac{\varphi(\mu)}{\varphi(\nu)} -1 }
  = 1 - \frac{\varphi(\mu)}{\varphi(\nu)}
  \leq 1- \frac{\mu}{\nu} = \abs{\frac{\mu}{\nu}-1}. 
  \]

  Now, let $(\nu_i,e_i)_{i\geq 1}$ be an eigendecomposition of $A$
  and $(\mu_i,f_i)_{i\geq 1}$  be an eigendecomposition of $B$. We have
  \begin{align*}
    \norm{AB^{-1} - I}_{\hs}^2
    & = \sum_{i\geq 1} \norm{ (AB^{-1} - I)f_i}^2\\
    & = \sum_{i\geq 1} \norm{ (\mu_i^{-1} A - I)f_i}^2\\
    & = \sum_{i\geq 1} \norm{ (\mu_i^{-1} A - I) \sum_{j\geq 1} \inner{f_i,e_j}e_j}^2\\
    & = \sum_{i,j\geq 1}  \paren{\frac{\nu_j}{\mu_i} -1}^2 \inner{f_i,e_j}^2.
  \end{align*}
  By the same token,
  \[
  \norm{\varphi(A)\varphi(B)^{-1} -I}_{\hs}^2 = \sum_{i,j\geq 1}  \paren{\frac{\varphi(\nu_j)}{\varphi(\mu_i)} - 1}^2 \inner{f_i,e_j}^2.
    \]
    Now apply inequality \eqref{eq:sublineq} to each term in the series to conclude.
\end{proof}
The main pertubation bound is given next.

\begin{prop}
  \label{prop:genperturb}
Let $\varphi:\R_+ \rightarrow \R_+$ be a nondecreasing and sublinear function, then it holds
\begin{equation}
  \label{eq:phimultpert}
  \norm{\varphi(I+S)\varphi(I+S_\x)^{-1}} \leq \paren{\Psi_\x +1}^2.
\end{equation}
In the case where $\varphi(t) = t^r$ for $r\in[0,1]$, we have the slightly sharper estimate
\begin{equation}
  \label{eq:cordes}
  \norm{(I+S)^r(I+S_\x)^{-r}} \leq \paren{\Psi_\x +1}^{2r}.
\end{equation}
\end{prop}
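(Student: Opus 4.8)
The plan is to chain the two preceding lemmas with the elementary fact that the operator norm is dominated by the Hilbert--Schmidt norm, and then to upgrade to the power case using Cordes' inequality.

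For the general statement \eqref{eq:phimultpert}, I would first split off the identity by the triangle inequality,
\[
\norm{\varphi(I+S)\varphi(I+S_\x)^{-1}} \leq 1 + \norm{\varphi(I+S)\varphi(I+S_\x)^{-1} - I}.
\]
Since $S$ and $S_\x$ are bounded, positive and self-adjoint, the operators $I+S$ and $I+S_\x$ are positive, self-adjoint and invertible, so Lemma~\ref{lem:phipert} applies with $A=I+S$, $B=I+S_\x$; bounding the operator norm by the Hilbert--Schmidt norm this gives
\[
\norm{\varphi(I+S)\varphi(I+S_\x)^{-1} - I} \leq \norm{\varphi(I+S)\varphi(I+S_\x)^{-1} - I}_\hs \leq \norm{(I+S)(I+S_\x)^{-1} - I}_\hs.
\]
Lemma~\ref{lem:zhou} then bounds the right-hand side by $\Psi_\x + \Psi_\x^2$, so altogether I obtain $\norm{\varphi(I+S)\varphi(I+S_\x)^{-1}} \leq 1 + \Psi_\x + \Psi_\x^2 \leq \paren{1 + \Psi_\x}^2$, the final step being the numerical inequality $1+x+x^2 \leq (1+x)^2$ valid for $x=\Psi_\x \geq 0$. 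This establishes \eqref{eq:phimultpert}.

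For the sharper power case \eqref{eq:cordes}, I would set $P := I+S$ and $Q := (I+S_\x)^{-1}$, both bounded positive self-adjoint operators, so that $(I+S)^r(I+S_\x)^{-r} = P^r Q^r$. Cordes' inequality, see~\cite[Thm.~IX.2.1-2]{Bat97}, is valid for $r\in[0,1]$ and yields $\norm{P^r Q^r} \leq \norm{PQ}^r$. It then remains only to control $\norm{PQ} = \norm{(I+S)(I+S_\x)^{-1}}$, which is handled exactly as in the general case above (triangle inequality, the bound $\norm{\cdot}\leq\norm{\cdot}_\hs$, and Lemma~\ref{lem:zhou}), giving $\norm{(I+S)(I+S_\x)^{-1}} \leq 1 + \Psi_\x + \Psi_\x^2 \leq \paren{1+\Psi_\x}^2$. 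Raising this to the power $r$ proves \eqref{eq:cordes}.

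The only genuinely non-routine input is the invocation of Cordes' inequality, which is precisely what lets the exponent $r$ be distributed across the product of the two (generally non-commuting) positive operators $P$ and $Q$; this is the step that produces the sharper exponent $2r$ rather than the naive $2$ one would get by applying the general bound to $\varphi(t)=t^r$. Everything else is a direct concatenation of Lemmas~\ref{lem:zhou} and~\ref{lem:phipert} with the passage from Hilbert--Schmidt to operator norm and the elementary inequality $1+x+x^2\leq(1+x)^2$.
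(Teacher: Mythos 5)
Your proof is correct and follows essentially the same route as the paper: the triangle inequality combined with Lemma~\ref{lem:phipert}, the domination of the operator norm by the Hilbert--Schmidt norm, Lemma~\ref{lem:zhou}, and the elementary bound $1+x+x^2\leq(1+x)^2$ for the general case, then Cordes' inequality applied on top of the identity-function case for the power estimate. The paper's proof is exactly this argument, stated more tersely.
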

\begin{proof}
  Combining  \eqref{eq:devfrid} from Lemma~\ref{lem:zhou} and \eqref{eq:phipert}
  from Lemma~\ref{lem:phipert}, we obtain
\begin{align*}
\norm{\varphi(I+S)\varphi(I+S_\x)^{-1}}  & \leq 
\norm{\varphi(I+S)\varphi(I+S_\x)^{-1} - I } +1 \\
& \leq \norm{\varphi(I+S)\varphi(I+S_\x)^{-1} - I } _{\hs}+1 \\
& \leq \norm{(I+S)(I+S_\x)^{-1} - I } _{\hs}+1 \\
& \leq \Psi_\x + \Psi_\x^2 +1 \leq (\Psi_\x +1)^2,
\end{align*}
which is the announced inequality~\eqref{eq:phimultpert}. The second estimate is
a consequence of the first with $\varphi(x)=x$ followed by the Cordes' inequality
(Theorem IX.2.1-2. in \cite{Bat97}).
\end{proof}

This yields the statement leading from~\eqref{eq:devHS} to~\eqref{eq:phipertl}, by applying~\eqref{eq:phimultpert}
to $\wt{\varphi}(u):=\varphi(\sqrt{\lambda}u)$, which is sublinear noncreasing since $\varphi$ is; and using $L\geq 1$.

\subsection{Probabilistic bounds}
\label{se:prob_bounds}
\pmathe{We turn to the probabilistic bounds, and as in previous
references \cite{BlaMuc18,CaponnettoDevito2007,CaponnettoYao2010} we
will apply a Hoeffding- or Bernstein-type deviation inequality for
Hilbert-valued random variables, see~\cite{PinSak86}. }

\begin{lem}\label{lem:psix-bound}
Under Assumption~\ref{ass:bernstein}, for $\eta \in (0,1)$, each one of the inequalities
\begin{align}
\label{ali:gammax} \Gamma_\x & \leq 2 \log(2/\eta) \lambda^{-1} \frac{\kappa^2}{\sqrt{n}}; \\
\label{ali:psix}  \Psi_{\x} & \leq 2
  \log(2/\eta) \bnl(\kappa,\kappa),
\end{align}
holds with probability~$1 - \eta$ over the draw of $\x=(x_1,\ldots,x_n) \stackrel{i.i.d.}{\sim} \rho_X $.
\end{lem}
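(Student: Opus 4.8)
The plan is to recognize both $\Gamma_\x$ and $\Psi_\x$ as Hilbert--Schmidt norms of empirical averages of independent, centered, operator-valued random variables, and then to invoke the cited Bernstein/Hoeffding-type deviation inequality for Hilbert-space-valued sums~\cite{PinSak86}. Writing $S = \lambda^{-1}B = \E[\bar K_X \otimes \bar K_X^\ast]$ and $S_\x = \frac1n\sum_{i=1}^n \bar K_{x_i}\otimes \bar K_{x_i}^\ast$, one has $S - S_\x = \frac1n\sum_{i=1}^n\paren{\E[\bar K_X\otimes \bar K_X^\ast] - \bar K_{x_i}\otimes \bar K_{x_i}^\ast}$, an average of i.i.d. summands valued in the Hilbert space of Hilbert--Schmidt operators on $\Hr$. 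Since the two estimates are each only claimed to hold separately with probability $1-\eta$, no union bound is needed at this stage; I would apply the deviation inequality once per quantity. The general mechanism is standard: if the centered summands are bounded by $c$ in $\hs$-norm and have second moment at most $v$, the inequality yields, with probability $\geq 1-\eta$, a deviation of order $c\log(2/\eta)/n$ (boundedness contribution) plus $\sqrt{v\log(2/\eta)/n}$ (variance contribution), and the generous prefactor $2\log(2/\eta)$ is chosen to absorb all numerical constants (using $\log(2/\eta)\geq\log 2$, so that $\sqrt{\log(2/\eta)}$ and the boundedness term are controlled).

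For $\Gamma_\x = \norm{S - S_\x}_{\hs}$ the summands are the rank-one operators $\bar K_{x_i}\otimes \bar K_{x_i}^\ast$, with $\hs$-norm equal to $\norm{\bar K_{x_i}}_\Hr^2 = \lambda^{-1}K(x_i,x_i)\leq \kappa^2/\lambda$; hence the centered summands are bounded by $c=2\kappa^2/\lambda$ and have second moment $v\leq(\kappa^2/\lambda)^2$. Here the variance contribution already has the target rate $\frac{\kappa^2}{\lambda}\sqrt{1/n}$, and feeding $c,v$ into the inequality and collecting constants gives exactly~\eqref{ali:gammax}.

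For $\Psi_\x = \norm{(I+S)^{-1/2}(S-S_\x)}_{\hs}$ the same scheme applies to the weighted summands $W_i := \paren{(I+S)^{-1/2}\bar K_{x_i}}\otimes \bar K_{x_i}^\ast$ (legitimate since $(I+S)^{-1/2}$ is non-random), with $\E W_i = (I+S)^{-1/2}S$. Their $\hs$-norm is $\norm{(I+S)^{-1/2}\bar K_{x_i}}_\Hr\,\norm{\bar K_{x_i}}_\Hr \leq \norm{\bar K_{x_i}}_\Hr^2 \leq \kappa^2/\lambda$, so again $c=2\kappa^2/\lambda$, which after division by $n$ produces precisely the first term $\tfrac{2\kappa^2}{\lambda n}$ of $\bnl(\kappa,\kappa)$. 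The crucial step is the variance bound: bounding $\norm{\bar K_X}_\Hr^2\leq\kappa^2/\lambda$ and using the trace identity $\E\norm{(I+S)^{-1/2}\bar K_X}_\Hr^2 = \E\,\tr{(I+S)^{-1}\bar K_X\otimes\bar K_X^\ast} = \tr{(I+S)^{-1}S} = \NN$ gives $v=\E\norm{W_i}_{\hs}^2\leq \kappa^2\NN/\lambda$, whose square root over $n$ is exactly the second term $\tfrac{\kappa}{\sqrt\lambda}\sqrt{\NN/n}$ of $\bnl(\kappa,\kappa)$. The deviation inequality then delivers $\Psi_\x\leq 2\log(2/\eta)\bnl(\kappa,\kappa)$ as claimed.

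The main obstacle is precisely this variance computation for $\Psi_\x$: it is where the effective dimension $\NN$ enters, through the identity $\E\,\tr{(I+S)^{-1}\bar K_X\otimes\bar K_X^\ast} = \tr{(I+S)^{-1}S}$, and matching its weight correctly is what makes the Bernstein variance term coincide with the $\sqrt{\NN/n}$ contribution in $\bnl(\kappa,\kappa)$. The only remaining care is the (routine) bookkeeping of numerical constants, verifying that the prefactor $2\log(2/\eta)$ genuinely dominates the raw constants of the cited inequality; this is immediate from $\log(2/\eta)\geq\log 2$.
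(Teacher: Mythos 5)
Your proposal is correct and follows essentially the same route as the paper's proof: both quantities are treated as Hilbert--Schmidt-valued empirical averages of the rank-one summands $\bar K_{x_i}\otimes\bar K_{x_i}^*$, resp. $(I+S)^{-1/2}(\bar K_{x_i}\otimes\bar K_{x_i}^*)$, with the sup-norm bound $\kappa^2/\lambda$ and exactly the paper's variance computation $\E\norm{\xi(X)}_{\hs}^2\leq \lambda^{-1}\kappa^2\NN$ via the trace identity, followed by the Bernstein-type inequality in Hilbert space. The only minor difference is that for $\Gamma_\x$ the paper invokes a Hoeffding-type bound using boundedness alone, which avoids the additive $c/n$ term that your Bernstein-style application would leave behind and which would otherwise have to be absorbed into the stated constant $2\log(2/\eta)\,\lambda^{-1}\kappa^2/\sqrt{n}$.
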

\begin{proof}  
  Introducing the shorthand $S_x := \bar K_x \otimes \bar K_x^*$
  we have $S = \E_{X \sim \rho_X}[S_X]$ and $\norm{S_x}_{\hs} \leq \lambda^{-1} \kappa^2$.
  By the Hoeffding-type inequality in Hilbert space (see e.g.~\cite{Pin91}),
  we obtain that \eqref{ali:gammax} holds with probability~$1 - \eta$.
  Now, we consider the function
  \begin{equation}
    \label{eq:xi}
    \xi(x) := \paren{I + S}^{-1/2}S_x,\quad x \in X,
  \end{equation}
  so that  $(I+S)^{-1/2}S = \E_{X\sim \rho_X}[\xi(X)]$, and
  $(I+S)^{-1/2}S_\x = n^{-1} \sum_{i=1}^n \xi(x_i)$\,.
  
  We see that~$\norm{\xi(x)}_{\hs} \leq \norm{S_x}_{\hs}=\norm{\bar K_x \otimes \bar K_x^*}_{\hs} \leq \lambda^{-1}
  \kappa^2 =\colon \frac C 2 $.
Also, we find
\begin{align*}
\E_{X \sim \rho_X}\norm{\xi(X)}_{\hs}^{2}
&= \E_{X \sim \rho_X}\brac{\tr{S_{X}\paren{I + S}^{-1}S_{X}}}\\
& \leq \lambda^{-1} \kappa^2 \,\E_{X \sim \rho_X}\brac{\tr{\paren{I + S}^{-1}S_{X}}}\\
& = \lambda^{-1} \kappa^2 \NN =: s^{2}.
\end{align*}
Therefore, by the Bernstein-type inequality in Hilbert space (see e.g.~\cite[Prop.~A.1]{BlaMuc18}),
we obtain that with probability~$1 - \eta$ we have
\begin{align*}
\Psi_{\x} \leq \paren{2 \log(2/\eta)} \paren{\frac C n + \frac s
  {\sqrt{n}}}
& =  2 \log(2/\eta)
\paren{ {\frac{2\kappa^2}{\lambda n}} + \sqrt{\frac{\kappa^2\NN}{\lambda n}}}\\
& =  2 \log(2/\eta) \bnl(\kappa,\kappa).
\end{align*}
which gives~\eqref{ali:psix} and completes the proof.
\end{proof}  
We turn to derive a bound similar
to~\cite[Prop.~5.2]{BlaMuc18}.  We
have that
$$
(B + \lambda)^{-1/2}(B_{\x} \frho - \incl_{\x}^{\ast}y) = \frac 1 n \sum_{j=1}^{n} \brac{\paren{I +
  S}^{-1/2} (y_{j} - \frho(x_{j}))\bar K_{x_{j}}}.
$$
\begin{lem}
  \label{lem:thetazbound}
  Under Assumption~\ref{ass:bernstein},
  for $\eta \in (0,1)$, with probability~$1 - \eta$ over the draw of $\z=((x_1,y_1),\ldots,(x_n,y_n)) \stackrel{i.i.d.}{\sim} \rho $, it holds
$$
\Theta_{\z}:= \norm{\frac 1 n \sum_{j=1}^{n} \brac{\paren{I +
      S}^{-1/2} (y_{j} - \frho(x_{j}))\bar K_{x_{j}}}} \leq
\paren{2 \log(2/\eta)} \sqrt{\lambda} \bnl(M,\sigma).
$$
\end{lem}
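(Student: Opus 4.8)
The plan is to recognize $\Theta_\z$ as the norm of an empirical average of $n$ i.i.d. centered $\Hr$-valued random variables and to apply the same Bernstein-type deviation inequality in Hilbert space that was used for the bound on $\Psi_\x$ in Lemma~\ref{lem:psix-bound}. Concretely, I would set
\[
  \zeta_j := \paren{I+S}^{-1/2}(y_j - \frho(x_j))\bar K_{x_j},\qquad j=1,\dots,n,
\]
so that $\Theta_\z = \norm{\frac1n\sum_{j=1}^n \zeta_j}$ and the $\zeta_j$ are i.i.d. copies of $\zeta := (I+S)^{-1/2}(Y - \frho(X))\bar K_X$. The first (easy) step is to check that $\zeta$ is centered: conditioning on $X$ and using that the regression function satisfies $\frho(X) = \E[Y\mid X]$, the scalar factor $\E[(Y-\frho(X))\mid X]$ vanishes, hence $\E[\zeta]=0$.

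The core of the argument is to verify a Bernstein moment condition for $\norm{\zeta}$ of the form $\E\norm{\zeta}^k \leq \frac{k!}{2}\tilde\sigma^2\tilde M^{k-2}$ with $\tilde\sigma^2 = \sigma^2\NN$ and $\tilde M = \lambda^{-1/2}\kappa M$. I would write $\norm{\zeta}^k = \abs{Y-\frho(X)}^k\,b(X)^k$ with $b(X) := \norm{(I+S)^{-1/2}\bar K_X}$, and use $b(X)\leq\lambda^{-1/2}\kappa$ (since $\norm{(I+S)^{-1/2}}\leq1$ and $\norm{\bar K_X}^2 = \lambda^{-1}K(X,X)\leq \lambda^{-1}\kappa^2$) to peel off $(k-2)$ of the $b$-factors as the scale $\tilde M$. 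The remaining expectation $\E[\abs{Y-\frho(X)}^k b(X)^2]$ is then handled by conditioning on $X$: invoking Assumption~\ref{ass:bernstein} conditionally on $X$ and pulling out the noise moments leaves $\frac{k!}{2}\sigma^2 M^{k-2}\,\E_X[b(X)^2]$. The variance computation closes the loop via
\[
  \E_X[b(X)^2] = \E_X\inner{(I+S)^{-1}\bar K_X,\bar K_X}
  = \tr{(I+S)^{-1}\E_X[\bar K_X\otimes\bar K_X^*]} = \tr{(I+S)^{-1}S} = \NN,
\]
which produces exactly $\tilde\sigma^2 = \sigma^2\NN$.

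With these two parameters in hand, the Bernstein-type inequality for Hilbert-valued variables (\cite[Prop.~A.1]{BlaMuc18}, \cite{PinSak86}), applied exactly as in the proof of Lemma~\ref{lem:psix-bound}, yields with probability $1-\eta$
\[
  \Theta_\z \leq 2\log(2/\eta)\paren{\frac{2\tilde M}{n} + \frac{\tilde\sigma}{\sqrt n}}
  = 2\log(2/\eta)\paren{\frac{2\kappa M}{\sqrt\lambda\, n} + \sigma\sqrt{\frac{\NN}{n}}}
  = 2\log(2/\eta)\sqrt\lambda\,\bnl(M,\sigma),
\]
which is precisely the claimed bound.

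I expect the main obstacle to be the decoupling of the noise moments from the $\bar K_X$ factor while retaining the effective dimension $\NN$ in the variance proxy. Bounding $b(X)^2$ by its worst-case value $\lambda^{-1}\kappa^2$ before taking expectations would replace $\NN$ by the much larger $\lambda^{-1}\kappa^2$ and destroy the sharpness of the estimate; it is therefore essential to keep $b(X)^2$ inside the expectation and to use the conditional Bernstein moments, with the second-moment identity $\E_X[b(X)^2]=\NN$ as the crux. The only delicate point is thus the legitimacy of reading Assumption~\ref{ass:bernstein} conditionally on $X$, which is the natural and standard interpretation needed here.
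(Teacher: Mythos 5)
Your proof is correct and follows essentially the same route as the paper's: the paper likewise treats $\Theta_\z$ as the norm of a centered i.i.d.\ average of Hilbert-valued variables, establishes the moment bound $\E\norm{\xi(X,Y)}^{m} \leq \frac{1}{2}m!\,(\sigma\sqrt{\NN})^{2}(M\kappa\lambda^{-1/2})^{m-2}$ (deferring to \cite[Proof of Prop.~5.2]{BlaMuc18} exactly the decoupling computation you spell out, built on the identity $\E_{X}\norm{(I+S)^{-1/2}\bar K_{X}}^{2}=\NN$), and concludes with the Bernstein deviation inequality in Hilbert space. Your closing caveat about reading Assumption~\ref{ass:bernstein} conditionally on $X$ is the same standard interpretation the paper (and its cited reference) implicitly relies on, so there is no gap.
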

\begin{proof}
  We start, as in~\cite{BlaMuc18} with the observation that
  \begin{equation}
    \label{eq:kxsn}
\E_{X\sim\rho_X} \norm{ (I + S)^{-1/2}\bar K_{X}}_{\Hr}^{2} = \NN.
\end{equation}
We introduce the function
\begin{equation}
  \label{eq:ran-elem}
  \xi(x,y) := \paren{I +  S}^{-1/2} (y - \frho(x))\bar K_{x}, \quad
  x \in \X, y \in \R,
\end{equation}
so that $\Theta_{\z} = \norm{n^{-1}\sum_{i=1}^n \xi(x_i,y_i)}$
and $\E_{(X,Y)\sim \rho}[ \xi(X,Y)]=0$.
Then we bound the $m$-th moments as in~\cite[Proof of
Prop.~5.2]{BlaMuc18} using \eqref{eq:kxsn}
and the noise moments assumption, to obtain
$$
\E_{(X,Y) \sim \rho}\norm{\xi(X,Y)}^{m} \leq \frac 1 2 m!\paren{\sigma \sqrt{\NN}}^{2}
\paren{M\kappa \lambda^{-1/2}}^{m-2}.
$$
Again, an application of the Bernstein deviation inequality in Hilbert space \cite[Prop.~A.1]{BlaMuc18} completes the proof.
\end{proof}

Finally, we derive the following relative deviation bound between the effective
dimension and its empirical counterpart, see~(\ref{it:effdim})
and~(\ref{it:effdimemp}), respectively. A result of a similar flavor
can be found in \cite[Prop.~1]{RudCamRosNIPS2015}.  The version below is somewhat streamlined with simpler assumptions, a slightly
different proof technique (based again on a decomposition of the type~\eqref{eqzhou}, inspired from~\cite{Guo_2017}), giving that the first estimate holds
without any restrictions on $n$ nor $\lambda$. Recall the assumption
that~$\kappa^2 = \sup_{x\in \X} K(x,x) < \infty$.
\begin{lem}\label{lem:n-nx}
  Let $\lam>0$ be fixed. 
For $\eta \in (0,1/2)$,  with probability~$1- 2\eta$ we have that
   \begin{equation}
     \label{eq:n-nx}
     \abs{\NN - \NN_{\x}} \leq 2 \log(2/\eta) (1 + \sqrt{\NN_{\x}})\bnl(\kappa,\kappa),
   \end{equation}
   which implies, provided $\lambda\geq \frac{4\kappa^2}{n}$:
   \begin{equation}
       \label{eq:n-nx-relative}
       \max\paren{\frac{\NN\vee 1}{\NN_{\x}\vee 1}, \frac{\NN_{\x}\vee 1}{\NN\vee 1}}
       \leq \paren{1 +  {\frac{8\kappa \log\paren{2 \eta^{-1}}}{\sqrt{\lambda n}}}}^2
       \leq 25 (\log(2\eta^{-1}))^2.
   \end{equation}
\end{lem}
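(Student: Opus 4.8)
The plan is to split the difference at the mixed quantity $\wt{\NN} := \tr{(I+S)^{-1}S_{\x}}$, in which the \emph{population} resolvent is paired with the \emph{empirical} operator $S_{\x}$, and to write $\NN - \NN_{\x} = (\NN - \wt{\NN}) + (\wt{\NN} - \NN_{\x})$; throughout I set $L := 2\log(2/\eta)$. The whole point of this particular splitting is that the two pieces are of genuinely different nature: one admits a purely deterministic Hilbert--Schmidt bound producing the factor $\sqrt{\NN_{\x}}$, while the other is irreducibly a scalar concentration of an empirical mean. This asymmetry is exactly what produces the two summands $1$ and $\sqrt{\NN_{\x}}$ in~\eqref{eq:n-nx}, and it is what forces the loss of a factor $2$ in the failure probability.

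For the second piece I would use the first-order resolvent identity $(I+S)^{-1} - (I+S_{\x})^{-1} = (I+S)^{-1}(S_{\x}-S)(I+S_{\x})^{-1}$ (a decomposition in the spirit of~\eqref{eqzhou}) to write $\wt{\NN} - \NN_{\x} = \tr{(I+S)^{-1}(S_{\x}-S)(I+S_{\x})^{-1}S_{\x}}$. Rearranging cyclically and applying $\abs{\tr{CD}}\leq\norm{C}_{\hs}\norm{D}_{\hs}$ with $C = (I+S)^{-1/2}(S_{\x}-S)$ and $D = (I+S_{\x})^{-1}S_{\x}(I+S)^{-1/2}$ gives $\abs{\wt{\NN}-\NN_{\x}}\leq\Psi_{\x}\norm{D}_{\hs}$. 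The key is that $D$ carries a full factor $S_{\x}$: writing $(I+S_{\x})^{-1}S_{\x} = \brac[1]{(I+S_{\x})^{-1/2}S_{\x}^{1/2}}^{2}$, keeping one copy $(I+S_{\x})^{-1/2}S_{\x}^{1/2}$ in Hilbert--Schmidt norm (which equals $\sqrt{\NN_{\x}}$) and bounding the remaining factor in operator norm by $1$ yields $\norm{D}_{\hs}\leq\sqrt{\NN_{\x}}$, so that $\abs{\wt{\NN}-\NN_{\x}}\leq\Psi_{\x}\sqrt{\NN_{\x}}$. On the event of Lemma~\ref{lem:psix-bound} where $\Psi_{\x}\leq L\,\bnl(\kappa,\kappa)$ (probability $1-\eta$), this contributes exactly the $\sqrt{\NN_{\x}}$-term.

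The first piece $\NN - \wt{\NN} = \tr{(I+S)^{-1}(S-S_{\x})}$ cannot be treated this way, since after any Hilbert--Schmidt pairing a bare resolvent $(I+S)^{-1/2}$ would remain, which is not Hilbert--Schmidt (the trace $\NN$ is finite only thanks to the extra $S$ factor). Instead I would exploit $S = \E_{X}\brac{\bar K_{X}\otimes\bar K_{X}^{\ast}}$ and $S_{\x} = n^{-1}\sum_{i}\bar K_{x_{i}}\otimes\bar K_{x_{i}}^{\ast}$ to recognize $\NN - \wt{\NN} = \E_{X}\brac{h(X)} - n^{-1}\sum_{i}h(x_{i})$ as a centered empirical average of the scalar function $h(x) := \norm{(I+S)^{-1/2}\bar K_{x}}^{2}$. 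Since $0\leq h\leq\kappa^{2}/\lambda$, $\E h = \NN$ and $\E h^{2}\leq(\kappa^{2}/\lambda)\NN$, a scalar Bernstein inequality gives $\abs{\NN-\wt{\NN}}\leq L\paren{\kappa^{2}/(\lambda n) + \sqrt{\kappa^{2}\NN/(\lambda n)}}\leq L\,\bnl(\kappa,\kappa)$ with probability $1-\eta$. Intersecting the two favorable events (probability $1-2\eta$) and adding the two bounds yields $\abs{\NN-\NN_{\x}}\leq L(1+\sqrt{\NN_{\x}})\bnl(\kappa,\kappa)$, which is precisely~\eqref{eq:n-nx}; note that this argument needs no restriction on $n$ or $\lambda$.

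The relative bound~\eqref{eq:n-nx-relative} then follows by elementary algebra. Assuming $\lambda\geq 4\kappa^{2}/n$, I would first check $\bnl(\kappa,\kappa)\leq 2\kappa(\lambda n)^{-1/2}\sqrt{\NN\vee 1}$ and $1+\sqrt{\NN_{\x}}\leq 2\sqrt{\NN_{\x}\vee 1}$, so that~\eqref{eq:n-nx} becomes $\abs{\NN-\NN_{\x}}\leq\delta\sqrt{(\NN\vee 1)(\NN_{\x}\vee 1)}$ with $\delta := 4\kappa L(\lambda n)^{-1/2}$. Since $t\mapsto t\vee 1$ is $1$-Lipschitz, the same inequality holds with $\NN,\NN_{\x}$ replaced by $\NN\vee 1,\NN_{\x}\vee 1$; writing $w := \sqrt{(\NN\vee 1)/(\NN_{\x}\vee 1)}$ this reads $\abs{w^{2}-1}\leq\delta w$, and solving the quadratic gives $w\leq 1+\delta$, hence the ratio is at most $(1+\delta)^{2} = \paren{1 + 4\kappa L/\sqrt{\lambda n}}^{2}$; the symmetric choice bounds the reciprocal. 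The final numerical estimate $\leq 25(\log(2\eta^{-1}))^{2}$ comes from $\lambda n\geq 4\kappa^{2}$ together with $\log(2\eta^{-1})\geq 1$ (valid since $\eta<1/2$). The main obstacle, and the conceptual heart of the argument, is exactly the asymmetric splitting above: isolating a piece that still carries an explicit $S_{\x}^{1/2}$ (amenable to a deterministic Hilbert--Schmidt bound) from a piece that is a pure scalar empirical-mean deviation, the latter being what forces the second event and hence the probability $1-2\eta$.
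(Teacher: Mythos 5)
Your proof is correct and takes essentially the same route as the paper: your splitting at $\wt{\NN}=\tr{(I+S)^{-1}S_{\x}}$ is exactly the traced form of the paper's resolvent decomposition, the term $\NN-\wt{\NN}$ is handled by the same scalar Bernstein argument on $\xi_i=\tr{(I+S)^{-1}S_{x_i}}$, and the term $\wt{\NN}-\NN_{\x}$ by the same Hilbert--Schmidt Cauchy--Schwarz bound $\Psi_{\x}\sqrt{\NN_{\x}}$ invoking Lemma~\ref{lem:psix-bound}, with the same union bound giving $1-2\eta$. The algebra for~\eqref{eq:n-nx-relative} (reduce to $\abs{w-1/w}\leq\delta$ and conclude $\max(w,1/w)\leq 1+\delta$) also coincides with the paper's, up to the cosmetic difference that you solve the quadratic inequality explicitly while the paper uses $\max(w,1/w)\leq 1+\abs{w-1/w}$.
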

\begin{proof}
  We verify that
$$
(I + S)^{-1}S - (I+S_{\x})^{-1}S_{\x} = (I+S)^{-1}(S - S_{\x}) +
(I+S)^{-1}(S - S_{\x})(I + S_{\x})^{-1}S_{\x}.
$$
Letting
\begin{align*}
  I_{1} &:= \abs{\tr{(I+S)^{-1}(S - S_{\x})}},\intertext{and}
          I_{2}&:= \abs{\tr{(I+S)^{-1}(S - S_{\x})(I + S_{\x})^{-1}S_{\x}}},
\end{align*}
we see that
$$
\abs{\NN- \NN_{\x}} \leq I_{1} + I_{2}.
$$
We introduce the non-negative real random variables~$\xi_i:= \tr{(I+S)^{-1}
  S_{x_i}}\geq 0$, (recalling the shorthand $S_x := \bar K_x \otimes \bar K_x^*$). Clearly, the $\xi_i$ are i.i.d. random variables with $\E[\xi_i] = \NN$,
and $\frac{1}{n}\sum_i \xi_i = \tr{(I+S)^{-1}S_{\x}}$. We shall bound the
deviation using the classical (real valued) Bernstein's inequality.
Denoting $\xi$ a random variable distributed as $\xi_1$ for short,
we find that~$\xi \leq \tr{S_{X}}\leq
\kappa^2/\lambda $. Also, since $\xi \geq 0$ we bound
$$
\E[\xi^{2}]\leq \lambda^{-1} \kappa^2 \E[\xi] = \lambda^{-1} \kappa^2 \NN.
$$
Bernstein's inequality  gives with probability~$1-\eta$ that
$$
I_{1} = \abs{\E[\xi] - \frac{1}{n}\sum_i\xi_i} \leq 2 \log(2/\eta)\paren{\frac{2 \kappa^2}{\lambda n} +
  \sqrt{\frac{\kappa^2 \NN}{\lambda  n}}} =  2 \log(2/\eta) 
\bnl(\kappa,\kappa).
$$
For bounding~$I_{2}$ we argue as follows.
\begin{multline}  \label{eq:product}
I_{2} = \abs{
\scalar{(I+S)^{-1}(S - S_{\x})}
{(I +S_{\x})^{-1}S_{\x}}{\hs}
  }\\
\leq  \norm{(I+S)^{-1}(S - S_{\x})}_{\hs}
 \norm{(I + S_{\x})^{-1}S_{\x}}_{\hs},
\end{multline}
and we bound each factor.The first one is bounded as 
\begin{equation}
  \label{eq:1bound}
  \norm{(I+S)^{-1}(S - S_{\x})}_{\hs} \leq\norm{(I+S)^{-1/2}(S -
    S_{\x})}_{\hs} = \Psi_{\x}.
\end{equation}
For the second one, we observe that
$$
\norm{(I + S_{\x})^{-1}S_{\x}}_{\hs}^{2} \leq \norm{(I +
  S_{\x})^{-1}S_{\x}}_{\Hr\to \Hr} \norm{(I +
  S_{\x})^{-1}S_{\x}}_{1}\leq \NN_{\x}.
$$
Using Lemma~\ref{lem:psix-bound}, overall
we obtain that with probability~$1 - 2\eta$ it holds
$$
I_{1} + I_{2} \leq  2 \log(2/\eta)\bnl(\kappa,\kappa) + \Psi_{\x} \sqrt{\NN_{\x}}
\leq 2 \log(2/\eta) (1 + \sqrt{\NN_{\x}})\bnl
(\kappa,\kappa),$$
which completes the proof of~\eqref{eq:n-nx}.
%
%

Put $A:= \sqrt{\NN(\lam)}$\,, $B := \sqrt{\NN_{\x}(\lam)}$\,, $\delta:= \frac{2\kappa}{\sqrt{\lambda n}}$\,, and $L=\log(2/\eta)$,
then one can rewrite~\eqref{eq:n-nx} as $\abs{A^2 - B^2} \leq L \delta (1+B)(\delta + A)$ (holding with probability $1-2\eta$).

If we assume from now on that~$\lambda \geq \frac{4\kappa^2}{n}$, then it holds $\delta \leq 1$;
putting $\bar{A}:=A\vee 1$, $\bar{B}:=B \vee 1$, we therefore have with high probability
$|\bar{A}^2 - \bar{B}^2| \leq \abs{A^2-B^2} \leq 4 L\delta \bar{A} \bar{B}$,
entailing
\[
  \max\paren{\frac{\bar{A}}{\bar{B}},\frac{\bar{B}}{\bar{A}}} \leq 1 +
  \abs{\frac{\bar{A}}{\bar{B}}-\frac{\bar{B}}{\bar{A}}} \leq 1 + 4L\delta.
\]
\end{proof}

To wrap up the proof of~\eqref{eq:devHS}--\eqref{eq:deveffdim}
in Proposition~\ref{prop:probestimates}, we collect
Lemmas~\ref{lem:psix-bound},~\ref{lem:thetazbound} and~\ref{lem:n-nx}
together with a union bound on the different events involved. Note the insignificant
technical point that Lemma~\ref{lem:psix-bound} is used 
in the proof of Lemma~\ref{lem:n-nx} already, so that we don't have
to pay again in the union bound for the event appearing in~\eqref{eq:devHS},
it has been already counted in~Lemma~\ref{lem:n-nx}. Overall for fixed $\lambda>0$
all estimates taken together required to use three Bernstein's inequalities
and one Hoeffding's inequality,
hence replacing $\eta$ by $\eta/4$ in the individual inequalities stated in the three lemmas
to obtain Proposition~\ref{prop:probestimates}.


 
\section{Proof of Proposition~\ref{prop:probestimates_main2}}
\label{sec:proof-prop2}

\pmathe{The first two statements in Proposition~\ref{prop:probestimates_main2}
are direct consequences of
Proposition~\ref{prop:probestimates}. Specifically,~\eqref{eq:phipertl} gives a bound for
$\norm{\paren{ \lam + B}(\lam + B_{\x})^{-1}}$ by choosing $\varphi$
as the identity function.
  Also, estimate~\eqref{eq:cordespert}
(with the choice $r=\frac{1}{2}$) and~\eqref{eq:reldeveffdim} directly
establish~\eqref{eq:phipertl2} and~\eqref{eq:reldeveffdim2} of
Proposition~\ref{prop:probestimates_main2}.
It remains to establish the estimate~(\ref{eq:assmpt1prob}).
}

We start with mentioning the following elementary property of sublinear functions.
\begin{lem}
  \label{le:subadd}
A sublinear index function~$\phi$ is subadditive, i.e., 
 $$
\phi(s+t) \leq \phi(s) + \phi(t),\quad s,t>0. 
$$
\end{lem}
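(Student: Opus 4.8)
The plan is to exploit the defining feature of sublinearity, namely that the ratio $u \mapsto \phi(u)/u$ is nonincreasing on $(0,\infty)$, and to combine it with the elementary algebraic identity that splits $s+t$ into its two summands. First I would fix $s,t>0$ and observe that since $s+t \geq s$ and $s+t \geq t$, the nonincreasing character of $\phi(u)/u$ immediately gives the two inequalities
\[
  \frac{\phi(s+t)}{s+t} \leq \frac{\phi(s)}{s}, \qquad
  \frac{\phi(s+t)}{s+t} \leq \frac{\phi(t)}{t}.
\]

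The key step is then to write $\phi(s+t)$ as a weighted combination of a single common ratio,
\[
  \phi(s+t) = (s+t)\,\frac{\phi(s+t)}{s+t}
  = s\cdot\frac{\phi(s+t)}{s+t} + t\cdot\frac{\phi(s+t)}{s+t},
\]
and to bound the first term using $\frac{\phi(s+t)}{s+t}\leq\frac{\phi(s)}{s}$ and the second using $\frac{\phi(s+t)}{s+t}\leq\frac{\phi(t)}{t}$. This yields
\[
  \phi(s+t) \leq s\cdot\frac{\phi(s)}{s} + t\cdot\frac{\phi(t)}{t}
  = \phi(s)+\phi(t),
\]
which is exactly the claimed subadditivity.

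There is essentially no serious obstacle here; the entire argument is a two-line consequence of the sublinearity hypothesis, and the only point requiring minimal care is that $s,t>0$ guarantees the ratios $\phi(s)/s$ and $\phi(t)/t$ are well defined (the value at $0$ plays no role, so the convention $\phi(0)=0$ is not needed). I would simply emphasize that subadditivity is a genuine consequence of sublinearity and not of mere monotonicity, since the decomposition crucially compares the ratio at the larger argument $s+t$ against the ratios at the smaller arguments $s$ and $t$.
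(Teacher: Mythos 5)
Your proof is correct and is essentially the same as the paper's: the paper multiplies the nonincreasing-ratio property into the form $s\phi(s+t)\leq(s+t)\phi(s)$ and $t\phi(s+t)\leq(s+t)\phi(t)$ and sums, which is algebraically identical to your decomposition of $\phi(s+t)$ into the two weighted ratio terms. No gap, and no meaningful difference in approach.
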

\begin{proof}
Sublinearity of~$\phi$ yields~$s\phi(s+t) \leq (s+t)\phi(s)$, and also~$t\phi(s+t) \leq (s+t)\phi(t)$. Summing both inequalities allows to complete the proof. 
\end{proof}

\begin{proof}[Proof of estimate~\eqref{eq:assmpt1prob}]
  We use arguments similar to those appearing in
  \cite{BlaMuc18,Rastogi2017,LuMaPer2018,CaponnettoYao2010}.
  The treatment of generalized source conditions has
  been considered
  in \cite{Rastogi2017} and we use arguments very close in spirit to that reference,
  deriving here inequalities that are tailored for our needs.

Since in several estimates below, the operator~$\paren{B_{\x} +
  \lam}^{1/2}$ occurs, we refer to the following general identity,
see~\cite[(A.33)]{LuMaPer2018}:
For any continuous (measurable) function~$m\colon
[0,\kappa^{2}]\to \R$ we find that
\begin{multline}
  \label{eq:f1/2-bound}
  \norm{m(B_{\x})\paren{\lambda I + B_{\x}}^{1/2}}   =
  \norm{m(B_{\x})^2(\lambda I +B_{\x})}^{\frac{1}{2}} \\
  \leq \paren{\lambda \norm{m(B_{\x})^2} +
    \norm{m(B_{\x})^2B_{\x}}}^{\frac{1}{2}} 
  =  \paren{\lambda \norm{m(B_{\x})}^2 + \norm{m(B_{\x})B_{\x}^{1/2}}^2}^{\frac{1}{2}}.  
\end{multline}

In the rest of this proof, we will use the notation defined in
Proposition~\ref{prop:probestimates}.
Below we will use several times the regularization and qualification estimates from
Proposition~\ref{prop:interpreg} and write $\ol{\gamma}$ as a shorthand for $\ol{\gamma}_\psi$.

We start with the following bound.
  \begin{multline*}
    \norm[1]{    \paren{B_{\x} + \lam}^{1/2}\paren{\fo - \fzl}}_{\Hr}\\
    \begin{aligned}
    & =  \norm[1]{    \paren{B_{\x} + \lam}^{1/2}\paren{\fo - g_{\lam}(B_{\x})\incl^*_{\x}\y}}_{\Hr}\\
    & \leq  \norm[1]{    \paren{B_{\x} + \lam}^{1/2}\paren{\fo - g_{\lam}(B_{\x})\incl^*_{\x} \incl_{\x} \fo}}_\Hr + 
      \norm[1]{\paren{B_{\x} + \lam}^{1/2}g_{\lam}(B_{\x})(\incl^*_{\x}(\y-\incl_{\x}\fo))}_{\Hr}\\
    & =: T_1 + 
    T_2.
  \end{aligned}
\end{multline*}

For the second term, we have, using~\eqref{eq:supg},~\eqref{eq:supgt}:
\begin{align*}
  T_2
  & = \norm[1]{\paren{B_{\x} + \lam}^{1/2}g_{\lam}(B_{\x})(B_{\x} \fo - \incl_{\x}^{\ast}y)}_\Hr\\
  & \leq \norm{g_{\lam}(B_{\x})(B_\x +\lambda)}
    \norm[1]{(B_\x +\lambda)^{-\frac{1}{2}} (B +\lambda)^{\frac{1}{2}}}
    \norm[1]{(B +\lambda)^{-\frac{1}{2}}(B_{\x} \fo - \incl_{\x}^{\ast}y)}_\Hr\\
  & \leq 
    2\ol{\gamma}
    \Xixl^{1/2}
    \Tzl. 
\end{align*}
For the first term,
\begin{align*}
    T_1 =  \norm{    \paren{B_{\x} + \lam}^{1/2}r_{\lam}(B_{\x}) \fo}_\Hr 
           &  \leq   \norm{    \paren{B_{\x} + \lam}^{1/2}r_{\lam}(B_{\x}) \phi(B)}\\
    & = \norm{    \paren{B_{\x} + \lam}^{1/2}r_{\lam}(B_{\x}) \varphi_1(B) \varphi_2(B)} \\
    & \leq \norm{    \paren{B_{\x} + \lam}^{1/2}r_{\lam}(B_{\x})
      \varphi_1(B_{\x}) \varphi_2(B)} \\
    & \qquad \qquad + \norm{    \paren{B_{\x} + \lam}^{1/2}r_{\lam}(B_{\x})
      (\varphi_1(B) - \varphi_1(B_{\x})) \varphi_2(B)} \\ 
    & := T_3 + T_4.
\end{align*}
We bound the terms in turn:
\begin{align*}
  T_4 & = \norm{    \paren{B_{\x} + \lam}^{1/2}r_{\lam}(B_{\x}) (\varphi_1(B) - \varphi_1(B_{\x})) \varphi_2(B)}\\
  & \leq \norm{    \paren{B_{\x} + \lam}^{1/2}r_{\lam}(B_{\x})}
    \lipc \norm{B - B_{\x}}_{\hs} \varphi_2(\kappa^2)\\
  & \leq 
\ol{\gamma} \lipc \varphi_2(\kappa^2) 
\sqrt{\lambda} \Gxl;
\end{align*}
above, we applied~(\ref{eq:f1/2-bound}) with~$m:= r_{\lam} $ for the second inequality;
for the first inequality we used the well-known fact that
since $\varphi_1$ is $\ell$-Lipschitz as a function of real variable, it is also
$\ell$-Lipschitz for the HS-norm when acting on self-adjoint Hilbert-Schmidt operators.
Finally, {using that the function~$\varphi_{2}$ is sublinear hence subadditive (see Lemma~\ref{le:subadd}) and nondecreasing,} we estimate
\begin{align*}
  T_3 & = \norm{    \paren{B_{\x} + \lam}^{1/2}r_{\lam}(B_{\x}) \varphi_1(B_{\x}) \varphi_2(B)}\\
  & \leq \norm{    \paren{B_{\x} + \lam}^{1/2}r_{\lam}(B_{\x}) \varphi_1(B_{\x}) \varphi_2(B_{\x} + \lambda)}
  \norm{\varphi_2(B_\x +\lambda)^{-1}\varphi_2(B+\lambda)} \\
  & \leq \paren{\norm{    \paren{B_{\x} + \lam}^{1/2}r_{\lam}(B_{\x}) \phi(B_{\x})}
    + \norm{  \paren{B_{\x} + \lam}^{1/2}r_{\lam}(B_{\x})
      \varphi_1(B_{\x})} \varphi_2(\lambda)} \Xixl^{\varphi_{2}}.
\end{align*}
Using the identity~(\ref{eq:f1/2-bound}) in both summands above, we
bound
$$
\norm{\paren{B_{\x} + \lam}^{1/2}r_{\lam}(B_{\x}) \phi(B_{\x})}
\leq 
2\ol{\gamma} \sqrt\lam \phi(\lam),
$$
and
$$
\norm{  \paren{B_{\x} + \lam}^{1/2}r_{\lam}(B_{\x}) \varphi_1(B_{\x})}
\leq 
2\ol{\gamma} \sqrt\lam \varphi_{1}(\lam).
$$
Overall we obtained that
$$
 T_3 \leq 
4 \ol{\gamma} \Xixl^{\varphi_{2}} \sqrt\lam \phi(\lam).
$$
In these terms we have obtained the bound
\begin{equation}
  \label{eq:compound-bound1}
  \norm{\paren{B_{\x} + \lam}^{1/2}\paren{\fo - \fzl}}_{\Hr}
   \leq 
    4 \ol{\gamma} \paren{\lipc \varphi_2(\kappa^2) \sqrt\lam \Gxl + \sqrt\lam \phi(\lam) \Xixl^{\varphi_{2}}
    + 
    \Tzl \Xixl^{1/2}},
\end{equation}
provided that the chosen regularization has qualification~$t \mapsto \sqrt t \phi(t)$.
Now assume the event $\Omega_{\lambda,\eta}$ of probability at least
$1-\eta$ from Proposition~\ref{prop:probestimates} is satisfied; we
plug in
estimates~\eqref{eq:devHShoeff},~\eqref{eq:phipertl},~\eqref{eq:cordespert}
and~\eqref{eq:devtheta} to obtain~\eqref{eq:assmpt1prob}.
\end{proof}
This completes the proof of Proposition~\ref{prop:probestimates_main2}.


\noindent
\\
{\large \bf Acknowledgments}
\\ 
\vspace*{1ex}
NM is supported by the German Research Foundation under  DFG Grant STE 1074/4-1.


\bibliography{bibliography}

\begin{thebibliography}{10}

\bibitem{BPR2007}
Frank Bauer, Sergei Pereverzev, and Lorenzo Rosasco.
\newblock On regularization algorithms in learning theory.
\newblock {\em J. Complexity}, 23(1):52--72, 2007.

\bibitem{Bat97}
Rajendra Bhatia.
\newblock {\em Matrix Analysis}, volume 169 of {\em Graduate Texts in
  Mathematics}.
\newblock Springer, 1997.

\bibitem{bissantz}
N.~Bissantz, T.~Hohage, A.~Munk, and F.~Ruymgaart.
\newblock Convergence rates of general regularization methods for statistical
  inverse problems and applications.
\newblock {\em SIAM J. Numer. Analysis}, 45(6):2610--2636, 2007.

\bibitem{BlaMas06}
G.~Blanchard and P.~Massart.
\newblock Discussion of "2004 ims medallion lecture: Local rademacher
  complexities and oracle inequalities in risk minimization", by
  v.~koltchinskii.
\newblock {\em Annals of Statistics}, 34(6):2664--2671, 2006.

\bibitem{BlaMuc18}
Gilles Blanchard and Nicole M{\"u}cke.
\newblock Optimal rates for regularization of statistical inverse learning
  problems.
\newblock {\em Foundations of Computational Mathematics}, 18(4):971--1013,
  2018.

\bibitem{CaponnettoDevito2007}
A.~Caponnetto and E.~De~Vito.
\newblock Optimal rates for the regularized least-squares algorithm.
\newblock {\em Foundations of Computational Mathematics}, 7(3):331--368, 2007
  (first online 2006).

\bibitem{CaponnettoYao2010}
A.~Caponnetto and Y.~Yao.
\newblock Cross-validation based adaptation for regularization operators in
  learning theory.
\newblock {\em Analysis and Applications}, 8(2):161--183, 2010.

\bibitem{DeVito2010}
E.~De~Vito, S.~Pereverzyev, and L.~Rosasco.
\newblock Adaptive kernel methods using the balancing principle.
\newblock {\em Foundations of Computational Mathematics}, 10(4):455--479, Aug
  2010.

\bibitem{discretization}
E.~De~Vito, L.~Rosasco, and A.~Caponnetto.
\newblock Discretization error analysis for {T}ikhonov regularization.
\newblock {\em Analysis and Applications}, 4(1):81--99, 2006.

\bibitem{learning}
E.~De~Vito, L.~Rosasco, A.~Caponnetto, and U.~De~Giovannini.
\newblock Learning from examples as an inverse problem.
\newblock {\em J. of Machine Learning Research}, 6:883--904, 2005.

\bibitem{EHN1996}
Heinz~W. Engl, Martin Hanke, and Andreas Neubauer.
\newblock {\em {Regularization of Inverse Problems}}, volume 375 of {\em
  Mathematics and its Applications}.
\newblock Kluwer Academic Publishers Group, Dordrecht, 1996.

\bibitem{Gerfo}
L.~Lo Gerfo, L.~Rosasco, F.~Odone, E.~De Vito, and A.~Verri.
\newblock Spectral algorithms for supervised learning.
\newblock {\em Neural Comput.}, 20(7):1873--1897, jul 2008.

\bibitem{Guo_2017}
Zheng-Chu Guo, Shao-Bo Lin, and Ding-Xuan Zhou.
\newblock Learning theory of distributed spectral algorithms.
\newblock {\em Inverse Problems}, 33(7):074009, jun 2017.

\bibitem{hastie01}
T.~Hastie, R.~Tibshirani, and J.~Friedman.
\newblock {\em The Elements of Statistical Learning}.
\newblock Springer, 2001.

\bibitem{Lepski90}
O.~V. Lepski\u{\i}.
\newblock A problem of adaptive estimation in {G}aussian white noise.
\newblock {\em Teor. Veroyatnost. i Primenen.}, 35(3):459--470, 1990.
\newblock Translation in Theory Probab. Appl. 35(3): 454--466 (1991).

\bibitem{LuMaPer2018}
Shuai Lu, Peter Math\'e, and Sergei~V. Pereverzev.
\newblock Balancing principle in supervised learning for a general
  regularization scheme.
\newblock {\em Applied and Computational Harmonic Analysis}, Online first 2018.

\bibitem{LP2013}
Shuai Lu and Sergei~V Pereverzev.
\newblock {\em {Regularization theory for ill-posed problems. Selected
  topics.}}, volume~58 of {\em Inverse and Ill-posed Problems Series}.
\newblock Walter De Gruyter, Berlin, 2013.

\bibitem{mendelson2010}
Shahar Mendelson and Joseph Neeman.
\newblock Regularization in kernel learning.
\newblock {\em The Annals of Statistics}, 38(1):526--565, 02 2010.

\bibitem{nicole_phd}
Nicole M{\"u}cke.
\newblock {\em Direct and Inverse Problems in Machine Learning: Kernel Methods
  and Spectral Regularization}.
\newblock PhD thesis, Universit\"at Potsdam, 2017.

\bibitem{SteGr2018}
Stephen {Page} and Steffen {Gr{\"u}new{\"a}lder}.
\newblock {The {G}oldenshluger-{L}epski Method for Constrained Least-Squares
  Estimators over RKHSs}.
\newblock {\em arXiv e-prints}, page arXiv:1811.01061, Nov 2018.

\bibitem{PinSak86}
I.~Pinelis and A.~I. Sakhanenko.
\newblock Remarks on inequalities for large deviation probabilities.
\newblock {\em Theory of Probability \& Its Applications}, 30(1):143--148,
  1986.

\bibitem{Pin91}
Iosif Pinelis.
\newblock Inequalities for distributions of sums of independent random vectors
  and their application to estimating a density.
\newblock {\em Theory of Probability \& Its Applications}, 35(3):605--607,
  1991.

\bibitem{poggio92}
T.~Poggio and F.~Girosi.
\newblock A theory of networks for approximation and learning.
\newblock {\em Foundation of Neural Networks, IEEE Press, Piscataway, N.J.},
  pages 91--106, 1992.

\bibitem{Rastogi2017}
Abhishake Rastogi and Sivananthan Sampath.
\newblock Optimal rates for the regularized learning algorithms under general
  source condition.
\newblock {\em Frontiers in Applied Mathematics and Statistics}, 3:3, 2017.

\bibitem{RudCamRosNIPS2015}
Alessandro Rudi, Raffaello Camoriano, and Lorenzo Rosasco.
\newblock Less is more: Nystr\"{o}m computational regularization.
\newblock In C.~Cortes, N.~D. Lawrence, D.~D. Lee, M.~Sugiyama, and R.~Garnett,
  editors, {\em Advances in Neural Information Processing Systems 28}, pages
  1657--1665. Curran Associates, Inc., 2015.

\bibitem{SmaleZhou2007}
Steve Smale and Ding-Xuan Zhou.
\newblock Learning theory estimates via integral operators and their
  approximations.
\newblock {\em Constr. Approx.}, 26(2):153--172, 2007.

\bibitem{SteHusSco09}
I.~Steinwart, D.~Hush, and C.~Scovel.
\newblock Optimal rates for regularized least squares regression.
\newblock In {\em Proceedings of the 22nd Annual Conference on Learning Theory
  (COLT)}, 2009.

\bibitem{wahba}
G.~Wahba.
\newblock {\em Spline Models for Observational Data}, volume~59.
\newblock {SIAM CBMS-NSF} Series in Applied Mathematics, 1990.

\end{thebibliography}
\bibliographystyle{plain}

\end{document}